\numberwithin{equation}{section}
\newtheorem{definition}{Definition}[section]
\newtheorem{proposition}[definition]{Proposition}
\newtheorem{theorem}[definition]{Theorem}
\newtheorem{lemma}[definition]{Lemma}
\newtheorem{remark}[definition]{Remark}
\newcommand{\al}{\alpha}
\newcommand{\bet}{\beta}
\title[Nonlinear damped beam equation]{Asymptotic profiles for the Cauchy problem of damped beam equation with two
variable coefficients and derivative nonlinearity
}
\author[M. A. Hamza, Y. Wakasugi and S. Yoshikawa]{Mohamed Ali Hamza${}^1$, Yuta Wakasugi${}^{2*}$ and Shuji Yoshikawa${}^3$}
\email{mahamza@iau.edu.sa}
\email{wakasugi@hiroshima-u.ac.jp}
\email{yoshikawa@oita-u.ac.jp}
\address{${}^1$
Basic Sciences Department,
Deanship of Preparatory Year and Supporting Studies,
P. O. Box 1982, Imam Abdulrahman Bin Faisal University,
Dammam, KSA.%
}
\address{${}^2$
Laboratory of Mathematics,
Graduate School of Advanced Science and Engineering,
Hiroshima University,
Higashi-Hiroshima, 739-8527, Japan%
}
\address{${}^3$
Division of Mathematical Sciences,
Faculty of Science and Technology,
Oita University,
Oita, 870-1192, Japan%
}
\date{\today}
\keywords{Nonlinear damped beam equations; asymptotic behavior; global existence; variable coefficients}
\begin{document}

\maketitle

\begin{abstract}
In this article we investigate the asymptotic profile of solutions for the Cauchy problem of the nonlinear damped beam equation with two variable coefficients: 
\[
\partial_t^2 u +  b(t) \partial_t u - a(t) \partial_x^2 u + \partial_x^4 u
    = \partial_x \left( N(\partial_x u) \right). 
\] 
In the authors' previous article \cite{YoWa21}, the asymptotic profile of solutions for linearized problem ($N \equiv 0$) was  
classified depending on the assumptions for the coefficients $a(t)$ and $b(t)$ and proved the asymptotic behavior in effective damping cases. 
We here give the conditions of the coefficients and the nonlinear term in order that the solution behaves as the solution for the heat equation: $b(t) \partial_t u - a(t) \partial_x^2 u=0$ asymptotically as $t \to \infty$.   
\end{abstract}

\footnote[0]{2010 Mathematics Subject Classification. 35G25; 35B40; 35A01}
\footnote[0]{$*$ Corresponding author}

\section{Introduction}
We study the Cauchy problem of nonlinear
damped beam equation
\begin{align}
\label{eq:ndb}
    \left\{
    \begin{alignedat}{3}
    &\partial_t^2 u + b(t) \partial_t u - a(t) \partial_x^2 u + \partial_x^4 u
    = \partial_x \left( N(\partial_x u) \right),
    &\qquad&
    t \in (0,\infty), x \in \mathbb{R},\\
    &u(0,x) = u_0(x), \ \partial_t u(0,x) = u_1(x),
    &\qquad&
    x \in \mathbb{R},
    \end{alignedat}
    \right.
\end{align}
where
$u= u(t,x)$
is a real-valued unknown,
$a(t)$ and $b(t)$ 
are given positive functions of $t$,
$N(\partial_x u)$
denotes the nonlinear function,
and
$u_0$ and $u_1$
are given initial data.

Before giving more precise assumptions for $a(t)$, $b(t)$ and $N$ and our result, 
we first mention the physical background and the mathematical motivations of the problem. 
The equation \eqref{eq:ndb} corresponds to the so-called Falk model under isothermal assumption with the damping 
term.   
The Falk model is one of the models for a thermoelastic deformation with austenite-martensite phase transitions on shape memory alloys: 
\begin{equation*}
\begin{cases}
\partial_t^2 u  + \partial_x^4 u
    = \partial_x \left\{ (\theta  - \theta_c) \partial_x u  - (\partial_x u)^3 + (\partial_x u)^5 \right\},  \qquad & 
    \\
\partial_t \theta - \partial_x^2 \theta = \theta \partial_x u \partial_t \partial_x u,  \qquad &t \in (0,\infty), x \in \mathbb{R}, \\
u(0, x) = u_0(x), \ \partial_t u(0,x) = u_1(x), \ \theta(0,x) = \theta_0(x),  \qquad &x \in \mathbb{R},
\end{cases}
\end{equation*}
where $u$ and $\theta$ are the displacement and the absolute temperature, respectively, and $\theta_c$ is a positive constant representing the critical temperature for the phase transition. 
If we assume the temperature are controllable and uniformly distributed with respect to the space, that is, $\theta$ is given function uniform in $x$ such as $\theta - \theta_c = a(t)$ and set 
$N(\varepsilon) = \varepsilon^5 - \varepsilon^3$, then the problem 
\eqref{eq:ndb} is surely derived. 
Our interest directs to the behavior of solution around the initial temperature $\theta_0$ is closed to the critical temperature $\theta_c$. 
Indeed, the Lyapunov stability for the solution of the Falk model is shown in \cite{su-yo}, which claims that the temperature tends to the function uniformly distributed in $x$ in the bounded domain case. 
For more precise information of the Falk model of shape memory alloys, 
we refer the reader to Chapter 5 in \cite{br-sp}.  
We are also motivated by the extensible beam equation proposed by Woinovsky-Krieger \cite{wo}: 
\begin{equation}\label{wo-kr}
 \partial_t^2 u - \left( \int_{\mathbb{R}} | \partial_x u|^2 dx   \right) \partial_x^2 u + \partial_x^4 u
    = 0. 
\end{equation}
In \cite{ba2}, the model with the damping term was proposed and the stability result was shown. 
The problem \eqref{eq:ndb} corresponds to the nonlinear generalization for the equation. 
As the observation similar to the Kirchhoff equation, the linearized 
problem substituting the given function $a(t)$ into the nonlocal term was also studied by e.g.
\cite{DaEb}, \cite{ra-yo},  \cite{YoWa21} and \cite{Li-MuAr}. 

Next, let us explain the mathematical background of our problem. 
It is well-known that the solution of the Cauchy problem for the damped wave equation $\partial_t^2 u + \partial_t u - \partial_x^2 u=0$ behaves as the solution for the heat equation $\partial_t u - \partial_x^2 u =0$ asymptotically as $t \to \infty$ (see e.g. \cite{MaNi03}). 
Roughly speaking, this implies that $\partial_t^2 u$ decays faster than $\partial_t u$ as $t \to \infty$. 
From the same observation, the solution for the beam equation $\partial_t^2 u + \partial_t u - \partial_x^2 u +\partial_x^4 u=0$ 
behaves as the solution for the heat equation $\partial_t u - \partial_x^2 u =0$ asymptotically as $t \to \infty$, 
because $\partial_x^4 u$ decays faster than $\partial_x^2 u$. 
The above observation induces the investigation of the solution for the equation with time variable coefficient: $\partial_t^2 u + b(t) \partial_t u - \partial_x^2 u=0$ with $b(t) \sim (1+t)^{\beta}$. 
The precise analysis implies that the solution behaves as the solution for the heat equation $b(t) \partial_t u - \partial_x^2 u =0$ when $\beta < -1$, and on the other hand, that the solution behaves as the solution for the wave equation $\partial_t^2 u - \partial_x^2 u =0$ when $-1 < \beta <1$ (see e.g. \cite{Mo76}, \cite{Ni2}, \cite{Wi06} and \cite{Wi07JDE}). 
Correspondingly, the authors in \cite{YoWa21} studied the asymptotic behavior of the solution for  the linearized problem of \eqref{eq:ndb} 
\[
\partial_t^2 u + b(t) \partial_t u - a(t) \partial_x^2 u + \partial_x^4 u
    = 0. 
\] 
As in Figure 1, 
we divide the two-dimensional regions $\Omega_j$ for $(\al, \bet)$ ($j=1,2,3,4,5$) by 
\begin{align*}
\Omega_1 &:= \left\{ (\al,\bet) \in \mathbb{R}^2 \mid  -1 < \bet <  \min\{ \al + 1,  2\al +1 \} \right\}, \\
\Omega_2 &:= \left\{ (\al,\bet) \in \mathbb{R}^2 \mid  \max\{ -1,  2\al +1 \} < \bet <  1 \right\}, \\
\Omega_3 &:= \left\{ (\al,\bet) \in \mathbb{R}^2 \mid   \bet <  -1 < \alpha \right\}, \\
\Omega_4 &:= \left\{ (\al,\bet) \in \mathbb{R}^2 \mid  \bet <  -1, \alpha <-1 \right\}, \\
\Omega_5 &:= \left\{ (\al,\bet) \in \mathbb{R}^2 \mid  \max\{ 1,  \al +1 \} < \bet  \right\}.
\end{align*}

\hspace*{1cm} 
\unitlength 0.1in
\begin{picture}( 31.9000, 23.3500)(  0.0000,-35.2500)
\put(30.6000,-24.6000){\makebox(0,0)[lt]{$\al$}}%
\put(17.5000,-13.6000){\makebox(0,0)[rb]{$\bet$}}%
\put(18.4000,-24.4000){\makebox(0,0)[lt]{$0$}}%
\put(17.6000,-19.7000){\makebox(0,0)[rb]{$1$}}%
\put(18.4000,-28.6000){\makebox(0,0)[lt]{$-1$}}%
\put(16.7000,-23.8000){\makebox(0,0)[rb]{$-\frac{1}{2}$}}%
\put(6.6000,-28.4000){\makebox(0,0)[lt]{$\bet=-1$}}%
\put(21.2000,-17.2000){\makebox(0,0)[lt]{$\beta = \al +1$}}%
\put(15.3000,-26.6000){\makebox(0,0)[rb]{$\bet = 2 \al +1$}}%
\put(24.4000,-22.1000){\makebox(0,0){$\Omega_1$}}%
\put(9.9000,-23.3000){\makebox(0,0)[rb]{$\Omega_2$}}%
\put(20.0000,-31.8000){\makebox(0,0)[lb]{$\Omega_3$}}%
\put(9.3000,-31.9000){\makebox(0,0){$\Omega_4$}}%
\put(12.5000,-15.8000){\makebox(0,0){$\Omega_5$}}%
\put(18.2000,-36.1000){\makebox(0,0){Figure 1.}}%
%
\special{pn 20}%
\special{pa 1810 2000}%
\special{pa 1540 2810}%
\special{fp}%
%
\special{pn 20}%
\special{pa 1540 2800}%
\special{pa 1540 3390}%
\special{fp}%
%
\special{pn 20}%
\special{pa 610 2810}%
\special{pa 3190 2810}%
\special{fp}%
%
\special{pn 20}%
\special{pa 1810 2000}%
\special{pa 600 2000}%
\special{fp}%
%
\special{pn 8}%
\special{pa 600 2400}%
\special{pa 3170 2400}%
\special{fp}%
\special{sh 1}%
\special{pa 3170 2400}%
\special{pa 3104 2380}%
\special{pa 3118 2400}%
\special{pa 3104 2420}%
\special{pa 3170 2400}%
\special{fp}%
%
\special{pn 8}%
\special{pa 1810 3400}%
\special{pa 1810 1200}%
\special{fp}%
\special{sh 1}%
\special{pa 1810 1200}%
\special{pa 1790 1268}%
\special{pa 1810 1254}%
\special{pa 1830 1268}%
\special{pa 1810 1200}%
\special{fp}%
%
\special{pn 20}%
\special{pa 1810 2000}%
\special{pa 2570 1230}%
\special{fp}%
\end{picture}%

\vspace*{3mm}

By a scaling argument, the result gives the conjecture for 
the classification of the asymptotic behavior of the solution: 
\begin{enumerate}
\item In $(\al, \bet) \in \Omega_1$, $u(t)$ behaves as the solution for $b(t) u_t - a(t) u_{xx} = 0$. 
\item In $(\al, \bet) \in \Omega_2$, $u(t)$ behaves as the solution for $b(t) u_t + u_{xxxx} = 0$. 
\item In $(\al, \bet) \in \Omega_3$, $u(t)$ behaves as the solution for $u_{tt} - a(t) u_{xx} = 0$. 
\item In $(\al, \bet) \in \Omega_4$, $u(t)$ behaves as the solution for $u_{tt} + u_{xxxx} = 0$. 
\item In $(\al, \bet) \in \Omega_5$, $u(t)$ behaves as the solution for over damping case. 
\end{enumerate}
As a partial answer, the authors proved the effective damping cases (1) and (2) in \cite{YoWa21}.  
Here we shall give the result for the nonlinear problem \eqref{eq:ndb} in the case (1).  

From now on, we shall give our main result. 
To state it precisely, we put the following assumptions.

\noindent
\textbf{Assumption (A)}
The coefficients
$a(t)$ and $b(t)$
are smooth positive functions satisfying
\begin{align}
    C^{-1} (1+t)^{\alpha} \le a(t) \le C (1+t)^{\alpha},\quad
    C^{-1} (1+t)^{\beta} \le b(t) \le C (1+t)^{\beta}
\end{align}
and
\begin{align}
	|a'(t)| \le C (1+t)^{\alpha-1},\quad
	|b'(t)| \le C (1+t)^{\beta-1}
\end{align}
with some
constant $C \ge 1$
and the parameters
$\alpha, \beta \in \mathbb{R}$.
Moreover, we assume
\begin{align}
    (\alpha, \beta) \in \Omega_1
    := \left\{ (\alpha, \beta) \in \mathbb{R}^2 \mid
    -1 < \beta < \min \{ \alpha + 1, 2\alpha + 1 \} \right\}.
\end{align}

\noindent
\textbf{Assumption (N)}
The function
$N(\partial_x u)$
is a linear combination of
$(\partial_x u)^2$
and $p$-th order terms with $p \ge 3$.
More precisely,
the function
$N$
has the form
\begin{align}
    N(z) = \mu z^2 + \tilde{N}(z),
\end{align}
with some $\mu \in \mathbb{R}$,
where
$\tilde{N} \in C^2(\mathbb{R})$
satisfies
\begin{align}
    \tilde{N}^{(j)}(0)= 0
    \quad
    \text{and}
    \quad
    | \tilde{N}^{(j)}(z) - \tilde{N}^{(j)}(w) |
    \le C (|z| + |w|)^{p-1-j} |z-w|
    \quad (z,w \in \mathbb{R})
\end{align}
holds with some $p\ge 3$ for $j = 0,1,2$.

A typical example of our nonlinearity is
\begin{align}
    \partial_x \left( N(\partial_x u) \right)
    = \partial_x ( \partial_x u)^2
    + \partial_x \left( |\partial_x u|^{p-1} \partial_x u \right)
\end{align}
with $p \ge 3$.

\begin{remark}\label{rem:11}
The assumption (A) implies
\begin{align}
	\frac{-\beta + 1}{\alpha - \beta + 1} < 2 < p.
\end{align}
This means that the nonlinearity is supercritical
(see the argument in Section 4.1).
\end{remark}

We further prepare the following notations.
Let $G = G(t,x)$ be the Gaussian, that is,
\begin{align}
	G(t,x) = \frac{1}{\sqrt{4\pi t}} \exp \left( - \frac{x^{2}}{4t} \right).
\end{align}
We define
\begin{align}
	r(t) = \frac{a(t)}{b(t)} \quad
	\text{and} \quad
	R(t) = \int_{0}^{t} r(\tau) \,d\tau.
\end{align}
Remark that
\begin{align}
	C^{-1} (1+t)^{\alpha-\beta+1} \le R(t) \le C (1+t)^{\alpha-\beta+1}
\end{align}
holds with some constant $C \ge 1$, thanks to the assumption (A). 

\begin{theorem}
Under the assumptions (A) and (N),
there exists a constant
$\varepsilon_{0} > 0$
such that if
$(u_{0}, u_{1}) \in \left( H^{2,1}(\mathbb{R}) \cap H^{3,0}(\mathbb{R}) \right)
\times \left( H^{0,1}(\mathbb{R})\cap H^{1,0}(\mathbb{R}) \right)$
and
\begin{align}
	\| u_{0} \|_{H^{2,1}\cap H^{3,0}}
	+ \| u_{1} \|_{H^{0,1}\cap H^{1,0}}
	\le \varepsilon_{0},
\end{align}
then there exists a unique solution
\begin{align}\label{eq:sol:class}
	u \in C([0,\infty); H^{2,1}(\mathbb{R}) \cap H^{3,0}(\mathbb{R}))
	\cap C^{1}([0,\infty); H^{0,1}(\mathbb{R})\cap H^{1,0}(\mathbb{R})).
\end{align}
Moreover, the solution
$u$ has the asymptotic behavior
\begin{align}
	\| u(t,\cdot) - m^{*} G(R(t), \cdot ) \|_{L^{2}}
	\le
	C (R(t)+1)^{-\frac{1}{4}-\frac{\lambda}{2}}
	\left( \| u_{0} \|_{H^{2,1}\cap H^{3,0}} + \| u_{1} \|_{H^{0,1}\cap H^{1,0}} \right)
\end{align}
with some constants
$C>0$, $m^{*} \in \mathbb{R}$,
and $\lambda > 0$.
\end{theorem}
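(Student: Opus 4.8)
\emph{Plan.} The statement combines small--data global existence with a diffusion (parabolic) phenomenon, and the strategy is the standard one for supercritical semilinear problems of this kind. The key organizing device is the \emph{diffusive time} $R(t)$: since $R(t)\sim(1+t)^{\al-\bet+1}$ by the remark preceding the theorem, the effective equation $b\partial_tu-a\partial_x^2u=0$ becomes the ordinary heat equation in the variable $R$, and $G(R(t),\cdot)$ is exactly its fundamental solution along the flow. The four steps are: (i) record linear decay/profile estimates, in the variable $R$, for the homogeneous equation and for its inhomogeneous counterpart; (ii) recast \eqref{eq:ndb} as the integral equation $u=u_{\mathrm{lin}}+\mathcal N[u]$, where $u_{\mathrm{lin}}$ solves the homogeneous linearized problem with data $(u_0,u_1)$ and $\mathcal N[u](t)=\int_0^t\mathcal E(t,s)\,\partial_x\!\big(N(\partial_xu(s))\big)\,ds$, with $\mathcal E(t,s)$ the solution operator having vanishing data at time $s$ and prescribed $\partial_tu(s)$; (iii) solve this by a contraction in a space carrying the expected parabolic time--weights; (iv) subtract $m^{*}G(R(t),\cdot)$ and estimate the remainder.

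\emph{Linear input.} From \cite{YoWa21} (precisely case~(1) of the classification there) I would use, schematically: the decay estimate $\|\partial_x^ku_{\mathrm{lin}}(t)\|_{L^2}\lesssim(R(t)+1)^{-\frac14-\frac k2}\big(\|u_0\|_{L^1}+\|u_1\|_{L^1}\big)+(\text{exponentially small in higher Sobolev norms of }(u_0,u_1))$ --- here one invokes the $1$D embedding $H^{0,1}(\mathbb R)\hookrightarrow L^1(\mathbb R)$ --- together with the profile refinement $\|u_{\mathrm{lin}}(t)-m_0^{*}G(R(t),\cdot)\|_{L^2}\lesssim(R(t)+1)^{-\frac14-\frac{\lambda_0}{2}}\big(\|u_0\|_{H^{2,1}}+\|u_1\|_{H^{0,1}}\big)$ for some $\lambda_0>0$, where $m_0^{*}=\int_{\mathbb R}u_0\,dx+\int_0^\infty\big(\int_{\mathbb R}u_1\,dx\big)e^{-\int_0^sb(\tau)\,d\tau}\,ds$ (the weight $e^{-\int_0^sb}$ appears because the ``velocity mass'' $M_1(t)=\int\partial_tu_{\mathrm{lin}}\,dx$ solves $M_1'=-bM_1$). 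The part needing real care is the inhomogeneous estimate, which --- crucially using that the source is an exact $x$--derivative $\partial_xF$ --- should read $\|\partial_x^k\mathcal E(t,s)\,\partial_xF\|_{L^2}\lesssim b(s)^{-1}(R(t)-R(s)+1)^{-\frac14-\frac{k+1}{2}}\|F\|_{L^1}+b(s)^{-1}e^{-c\int_s^tb}\,\|F\|_{H^{k+1}}$. All of these should follow from a dyadic frequency analysis of $\widehat u''+b\widehat u'+(a\xi^2+\xi^4)\widehat u=\widehat f$: effective/parabolic behaviour on $|\xi|\lesssim1$, where $\widehat u$ is a Gaussian in $\xi$ with variance $\sim R(t)-R(s)$ (resp.\ $\sim R(t)$) times a fixed multiple of the source (resp.\ initial) data, and strongly damped behaviour on $|\xi|\gtrsim1$ coming jointly from $b(t)\partial_t$ and $\partial_x^4$.

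\emph{The iteration.} Let
\[
\|u\|_X:=\sup_{t\ge0}\Big[\sum_{k=0}^{3}(R(t)+1)^{\frac14+\frac k2}\|\partial_x^ku(t)\|_{L^2}+\cdots\Big],
\]
where $\cdots$ denotes the analogous time--weighted norms of $\partial_tu$ and of the weighted ($H^{2,1}$, $H^{0,1}$) quantities needed to reproduce the data space and to feed $L^1$--type bounds for the nonlinearity, and solve $\Phi[u]=u_{\mathrm{lin}}+\mathcal N[u]$ by contraction in a small ball of $X$. The linear term is bounded by the data norm via Step~(i). For $\mathcal N[u]$, Assumption~(N) and the $1$D Gagliardo--Nirenberg inequality give, with $F=N(\partial_xu)$,
\[
\|F(s)\|_{L^1}\lesssim(R(s)+1)^{-\frac32}\|u\|_X^2+(R(s)+1)^{-(p-\frac12)}\|u\|_X^p
\]
together with matching bounds for $\|F(s)\|_{H^{k+1}}$, for the weighted analogues, and for differences; plugging these into the inhomogeneous estimate and substituting $\sigma=R(s)$ (so $b(s)^{-1}ds=a(s)^{-1}d\sigma$, with $a(s)^{-1}\lesssim(R(s)+1)^{-\al/(\al-\bet+1)}$ by Assumption~(A)), the time integrals become Beta--type integrals of the shape $\int_0^{R(t)}(R(t)-\sigma+1)^{-\frac34-\frac k2}(\sigma+1)^{-\frac32-\frac{\al}{\al-\bet+1}}\,d\sigma$ plus exponentially localized remainders. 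Remark~\ref{rem:11} --- equivalently the defining inequality $\bet<2\al+1$ of $\Omega_1$ --- is exactly what forces all of these to be $\lesssim(R(t)+1)^{-\frac14-\frac k2}$ with a strictly positive margin, the slowest (marginal) contribution being the quadratic term $\mu(\partial_xu)^2$. Hence $\Phi$ is a contraction on the small ball; this gives the unique global solution, the decay $\|\partial_x^ku(t)\|_{L^2}\lesssim(R(t)+1)^{-\frac14-\frac k2}\varepsilon_0$, and --- after bootstrapping the energy identity --- the regularity class \eqref{eq:sol:class}.

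\emph{The profile, and the expected main obstacle.} Since $\int_{\mathbb R}\partial_x\big(N(\partial_xu)\big)\,dx=0$, the correction $\mathcal N[u]$ carries no mass, so the limiting mass of $u$ equals $m_0^{*}$; put $m^{*}=m_0^{*}$. Writing $u(t)-m^{*}G(R(t),\cdot)=\big[u_{\mathrm{lin}}(t)-m_0^{*}G(R(t),\cdot)\big]+\mathcal N[u](t)$, the bracket is $O\big((R(t)+1)^{-1/4-\lambda_0/2}\big)$ by Step~(i), while $\|\mathcal N[u](t)\|_{L^2}\lesssim(R(t)+1)^{-1/4-\lambda_1/2}$ for some $\lambda_1>0$ by the $k=0$ instance of the estimate just carried out; here the perfect--derivative structure of the nonlinearity is indispensable, since without the extra half power it buys one only reaches the borderline rate $(R(t)+1)^{-1/4}$. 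The theorem then follows with $\lambda=\min\{\lambda_0,\lambda_1\}$. I expect the real work to lie in Step~(i): establishing the inhomogeneous linear estimate with the correct $b(s)^{-1}$ weight, the sharp exponent, and the derivative gain \emph{uniformly} in the two variable coefficients $a(t),b(t)$ --- and, hand in hand, checking that with only $H^{2,1}\cap H^{3,0}$--regular data the resulting time integrals genuinely close throughout $\Omega_1$, the tight spots being the top--order derivative $\partial_x^3u$ and the marginally supercritical quadratic nonlinearity, whose supercriticality is quantified exactly by $\bet<2\al+1$. (A direct weighted--energy argument on \eqref{eq:ndb}, in the spirit of Todorova--Yordanov, followed by a second energy estimate for $u-m^{*}G(R(t),\cdot)$, is an alternative to Steps~(ii)--(iv) that runs into the same core difficulty.)
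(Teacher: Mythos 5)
Your plan is a legitimate alternative program (Duhamel formula plus $L^{1}$--$L^{2}$ decay estimates for the propagator, closed by contraction in a time-weighted space), and your bookkeeping is largely consistent: the formula for the limiting mass $m^{*}$ agrees with what the mass ODE $M''+b(t)M'=0$ gives, the change of variables $\sigma=R(s)$ with the weight $a(s)^{-1}\,d\sigma$ is right, and the Beta-integral analysis correctly identifies $\beta<2\alpha+1$ (equivalently $2\alpha-\beta+1>0$) as the condition that makes the quadratic term supercritical. However, there is a genuine gap, and it sits exactly where you locate "the real work": the linear input of Step (i) is asserted, not proved, and it is not available from \cite{YoWa21}. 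That paper does not establish pointwise Fourier-multiplier or $L^{p}$--$L^{q}$ estimates for $\partial_t^2+b(t)\partial_t-a(t)\partial_x^2+\partial_x^4$; it proves the linear asymptotics by the same self-similar--variable energy method used here. Producing the inhomogeneous estimate you need --- with the $b(s)^{-1}$ weight, the derivative gain from the $\partial_x F$ structure, and uniformity over all of $\Omega_1$ for two independent variable coefficients --- would require a WKB/diagonalization analysis of $\widehat{u}''+b(t)\widehat{u}'+(a(t)\xi^2+\xi^4)\widehat{u}=0$ in the spirit of Wirth's work, which is a substantial piece of analysis in its own right and is precisely what the paper's method is designed to avoid. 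As written, the proposal therefore reduces the theorem to an unproved linear theorem that carries essentially all of the difficulty.

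For contrast, the paper proceeds entirely on the physical side: it introduces the Gallay--Raugel scaling variables $s=\log(R(t)+1)$, $y=x/\sqrt{R(t)+1}$, decomposes the rescaled solution as $v=m(s)\phi+f$, $w=m_s\phi+m\psi+g$ with $\phi$ the Gaussian, and derives weighted energy identities (Lemma \ref{lem:YoWa:en}) for $(f,g)$ and for the antiderivatives $(F,G)$, using the Hardy-type inequality of Lemma \ref{lem:hardy} to convert the zero-mass conditions \eqref{eq:fgh:0} into decay. The smallness of the singular prefactors $r^2e^{-s}/a$ and $e^{-s}/a$ (Lemma \ref{lem:remainder}) plays the role of your high-frequency damping, and the exponential decay $\mathcal{E}(s)\lesssim e^{-\lambda s}$ together with the convergence of $m(s)$ yields the profile. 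This buys a proof that never needs coefficient-wise resolvent or multiplier estimates, at the cost of working with a hierarchy of coupled energies; your route, if the linear estimates were actually established, would give sharper and more flexible decay information but at a much higher price on the linear side. To turn your proposal into a proof you would either have to supply those estimates in full or switch to an energy argument of the paper's type.
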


\begin{remark}
From the proof, $\lambda$ is taken to be arbitrary so that
\[
	0 < \lambda < \min \left\{ \frac{1}{2}, \frac{2(\beta+1)}{\alpha-\beta+1}, \frac{2\alpha -\beta +1}{\alpha - \beta +1} \right\}.
\]
\end{remark}

The proof is based on the method by Gallay and Raugel \cite{GaRa98} using the self-similar 
transformation and the standard energy method. 

This paper is organized as follows.
In Section 2, we rewrite the problem through the self-similar transformation. 
Thereafter, we show several energy estimates in Section 3, and give a priori estimates through 
the estimates for the nonlinear terms in Section 4.  
In the appendix, we also give a lemma for energy identities which is frequently used in the proof 
and the proof of local-in-time existence of solution for the readers' convenience. 

At the end of this section we prepare notation and several definitions used throughout this paper. 
We denote by $C$ a positive constant, which may change from line to line.
The symbol
$a(t) \sim b(t)$
means that
$C^{-1} b(t) \le a(t) \le  C b(t)$
holds for some constant
$C \ge 1$.
$L^p = L^p(\mathbb{R})$
stands for the usual Lebesgue space,
and
$H^{k,m} = H^{k,m}(\mathbb{R})$
for $k \in \mathbb{Z}_{\ge 0}$
and $m \in \mathbb{R}$
is the weighted Sobolev space defined by
\begin{align*}
	H^{k,m}(\mathbb{R}) = \left\{ f \in L^2(\mathbb{R}) ;
			\| f \|_{H^{k,m}} = \sum_{\ell = 0}^k \| (1+|x|)^m \partial_x^{\ell} f \|_{L^2} < \infty \right\}.
\end{align*}

\section{Scaling variables}
The local existence of the solution in the class \eqref{eq:sol:class}
is standard (see Appendix B).
Thus, it suffices to show the a priori estimate.
To prove it,
following the argument of Gallay and Raugel \cite{GaRa98},
we introduce the scaling variables
\begin{align}\label{eq:scaling:var}
	s = \log (R(t)+1),\quad y=\frac{x}{\sqrt{R(t)+1}},
\end{align}
and define
$v = v(s,y)$ and $w = w(s,y)$ by
\begin{align}\label{eq:def:v}
	u(t,x) &= \frac{1}{\sqrt{R(t)+1}} v \left( \log(R(t)+1), \frac{x}{\sqrt{R(t)+1}} \right), \\
	 u_{t} (t,x) &= \frac{R'(t)}{(R(t)+1)^{3/2}} w \left( \log(R(t)+1), \frac{x}{\sqrt{R(t)+1}} \right).
\end{align}
Then, by a straightforward computation, the Cauchy problem \eqref{eq:ndb} is rewritten as
\begin{align}\label{eq:v:w}
	\left\{ \begin{alignedat}{3}
	&v_{s} - \frac{y}{2} v_{y} - \frac{1}{2} v = w,\\
	&\frac{r^{2}e^{-s}}{a} \left( w_{s} - \frac{y}{2} w_{y} - \frac{3}{2} w \right) + \left( 1+ \frac{r'}{a} \right) w
	= v_{yy} - \frac{e^{-s}}{a} v_{yyyy}
	+ \frac{e^{s}}{a} \partial_{y} \left( N \left( e^{-s} v_{y} \right) \right), \\
	&v(0,y) = v_{0}(y), \ w(0,y) = w_{0}(y),
	\end{alignedat} \right.
\end{align}
where
$v_{0}(y) = u_{0}(y)$
and $w_{0}(y) = \frac{1}{r(0)} u_{1}(y)$.
Here, we also note that the functions
$a, b, r, r'$ appearing the above precisely mean such as
$a(t(s)) = a(R^{-1}(e^{s}-1))$.

\begin{remark}
When $N(z) = |z|^{p-1}z$, the nonlinearity has a bound
\begin{align}
	\left| \frac{e^{s}}{a(t(s))} \partial_{y} \left( N ( e^{-s} v_{y} ) \right) \right|
	\le
	C e^{\left(\frac{-\beta+1}{\alpha-\beta+1} - p\right)s}
	|v_{y}|^{p-1} |v_{yy}|.
\end{align}
Since
$\frac{-\beta+1}{\alpha - \beta +1} < 2$,
the assumption (N) implies that the nonlinearity
can be treated as remainder.
\end{remark}

To investigate the asymptotic behavior of the solution of \eqref{eq:v:w},
we define
\begin{align}\label{eq:def:m}
	m(s) := \int_{\mathbb{R}} v(s,y) \,dy.
\end{align}
By the first equation of \eqref{eq:v:w} and the integration by parts, we have
\begin{align}
	m_{s}(s) = \frac{d}{ds}m(s) = \int_{\mathbb{R}} v_{s} \,dy
	= \int_{\mathbb{R}} \left( \frac{y}{2}v_{y} + \frac{1}{2}v + w \right)\,dy
	= \int_{\mathbb{R}} w \,dy.
\end{align}
We also define
\begin{align}
\label{eq:def:phi}
	\phi(y) &:= G(1,y) = \frac{1}{\sqrt{4\pi}} \exp \left( - \frac{y^{2}}{4} \right),\\
\label{eq:def:psi}
	\psi(y) &:= \phi_{yy}(y).
\end{align}
Using them, we decompose $(v,w)$ as
\begin{align}\label{eq:def:fg}
	\begin{aligned}
	v(s,y) &= m(s) \phi(y) + f(s,y),\\
	w(s,y) &= m_{s}(s) \phi(y) + m(s) \psi(y) + g(s,y),
	\end{aligned}
\end{align}
and we expect that the functions
$f$ and $g$ defined above can be treated as remainders.

By a direct calculation, we obtain the following equation for $m(s)$:

\begin{lemma}\label{lem:eq:m}
We have
\begin{align}
    \frac{r^2 e^{-s}}{a} \left( m_{ss} - m_s \right)
    = - \left( 1 + \frac{r'}{a} \right) m_s.
\end{align}
\end{lemma}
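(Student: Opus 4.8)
The plan is to integrate the second equation of \eqref{eq:v:w} over $y\in\mathbb{R}$ and identify each term. First I would dispose of the right-hand side: each of $v_{yy}$, $e^{-s}a^{-1}v_{yyyy}$ and $e^{s}a^{-1}\partial_{y}\bigl(N(e^{-s}v_{y})\bigr)$ is an exact $y$-derivative of a function that, thanks to the regularity class \eqref{eq:sol:class} (equivalently, the corresponding class for $v$) together with the weighted integrability built into the spaces $H^{k,m}$, tends to $0$ as $|y|\to\infty$. Hence
\[
\int_{\mathbb{R}} v_{yy}\,dy=\int_{\mathbb{R}} v_{yyyy}\,dy
=\int_{\mathbb{R}} \partial_{y}\bigl(N(e^{-s}v_{y})\bigr)\,dy=0,
\]
so the spatial integral of the whole equation reduces to an identity involving the left-hand side only.

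Next I would evaluate the left-hand side. The coefficients $\dfrac{r^{2}e^{-s}}{a}$ and $1+\dfrac{r'}{a}$ depend only on $s$, so they pass through the integral. Using $\displaystyle\int_{\mathbb{R}} w_{s}\,dy=\frac{d}{ds}\int_{\mathbb{R}} w\,dy=m_{ss}$ (obtained by differentiating the already-established relation $m_{s}=\int_{\mathbb{R}} w\,dy$), the integration by parts $\displaystyle\int_{\mathbb{R}}\tfrac{y}{2}w_{y}\,dy=-\tfrac12\int_{\mathbb{R}} w\,dy=-\tfrac12 m_{s}$, and $\displaystyle\int_{\mathbb{R}} w\,dy=m_{s}$, the bracketed quantity becomes
\[
\int_{\mathbb{R}}\Bigl(w_{s}-\tfrac{y}{2}w_{y}-\tfrac32 w\Bigr)\,dy
= m_{ss}+\tfrac12 m_{s}-\tfrac32 m_{s}=m_{ss}-m_{s}.
\]
Combining the two computations gives $\dfrac{r^{2}e^{-s}}{a}(m_{ss}-m_{s})+\Bigl(1+\dfrac{r'}{a}\Bigr)m_{s}=0$, which is precisely the asserted identity.

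The only point that needs care — and the one minor obstacle — is the justification of the integrations by parts and the vanishing of all boundary contributions at $|y|=\infty$, as well as the interchange of $\frac{d}{ds}$ with the integral. I would handle this by invoking the function space in which the solution is constructed: the weighted Sobolev setting guarantees that $v,v_{y},\dots,v_{yyyy}$, the nonlinear quantity $N(e^{-s}v_{y})$, and $w,\,y w_{y}$ are integrable and decay at spatial infinity, so no endpoint terms survive. Alternatively, since this is merely a scalar ODE identity for $m(s)$ that will be used subsequently, one may first derive it for smooth, rapidly decaying approximating data and then pass to the limit by continuous dependence.
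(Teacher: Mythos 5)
Your proof is correct and is exactly the ``direct calculation'' the paper alludes to (the paper states the lemma without writing out the computation): integrate the second equation of \eqref{eq:v:w} in $y$, note that the right-hand side consists of exact $y$-derivatives which integrate to zero, and use $\int_{\mathbb{R}}\tfrac{y}{2}w_{y}\,dy=-\tfrac12 m_{s}$ to reduce the bracket to $m_{ss}-m_{s}$. The signs and the final identity all check out.
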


From the above lemma and the straightforward computation,
we can see that
$(f,g)$
satisfies the following equations.
\begin{align}\label{eq:f:g}
	\left\{ \begin{alignedat}{3}
	&f_{s} - \frac{y}{2} f_{y} - \frac{1}{2} f = g,\\
	&\frac{r^{2}e^{-s}}{a} \left( g_{s} - \frac{y}{2} g_{y} - \frac{3}{2} g \right) + \left( 1+ \frac{r'}{a} \right) g
	= f_{yy} - \frac{e^{-s}}{a} f_{yyyy}
	+ \frac{e^{s}}{a} \partial_{y} \left( N \left( e^{-s} v_{y} \right) \right) + h,
	\end{alignedat} \right.
\end{align}
where
\begin{align}\label{eq:h}
    h &=
    - \frac{r^2 e^{-s}}{a} \left( 2 m_s \psi - \frac{y}{2}m \psi_y - \frac{3}{2} m \psi \right)
    - \frac{r'}{a} m \psi - \frac{e^{-s}}{a} m \psi_{yy}.
\end{align}
They satisfy
\begin{align}\label{eq:fgh:0}
	\int_{\mathbb{R}} f(s,y)\,dy
	= \int_{\mathbb{R}} g(s,y) \,dy
	= \int_{\mathbb{R}} h(s,y) \,dy
	= 0.
\end{align}
Therefore, our goal is
to give energy estimates of
the solutions
$(f,g)$
to the equation
\eqref{eq:f:g}
under the condition \eqref{eq:fgh:0}.

\section{Energy estimates}
In this section, we give
energy estimates of
$(f,g)$
defined by \eqref{eq:def:fg}.
We first prepare the following
general lemma for energy identities.
\begin{lemma}\label{lem:YoWa:en}
Let
$l,m\in \mathbb{R}$,
$n \in \mathbb{N}\cup \{0\}$,
and let
$c_1(s), c_2(s), c_4(s)$
be smooth functions defined on
$[0,\infty)$.
We consider a system for two functions
$f=f(s,y)$ and $g=g(s,y)$
given by
\begin{align}\label{eq:lem:YoWa}
    \left\{ \begin{alignedat}{3}
    &f_{s}- \frac{y}{2} f_{y}-l f = g,\\
    &c_{1}(s) \left( g_{s}- \frac{y}{2} g_{y} - m g \right)
    + c_{2}(s) g + g
    =  f_{yy} - c_{4}(s) f_{yyyy}+h
    \end{alignedat} \right.
    \quad
    (s,y) \in (0,\infty) \times \mathbb{R},
\end{align}
where
$h = h(s,y)$
is a given smooth function belonging to
$C([0,\infty); H^{0,n}(\mathbb{R}))$.
We define the energies
\begin{align}
    E_1(s)
    &=
    \frac{1}{2}\int_{\mathbb{R}}
    y^{2n}
    \left(
    f_y^2 + c_4(s) f_{yy}^2 + c_1(s)g^2
    \right) \,dy,\\
    E_{2}(s)
    &=
    \int_{\mathbb{R}}
    y^{2n}
    \left(
    \frac{1}{2} f^{2}
    + c_{1}(s) f g
    \right) \,dy.
\end{align}
Then, we have
\begin{align}
    \frac{d}{ds} E_1(s)
    &=
    - \int_{\mathbb{R}} y^{2n} g^2 \,dy 
    + \left( - \frac{2n-1}{4} + l \right)
    \int_{\mathbb{R}} y^{2n} f_y^2 \,dy
    + \left( - \frac{2n-3}{4} + l \right)c_4(s)
    \int_{\mathbb{R}} y^{2n} f_{yy}^2 \,dy \\
    &\quad +
    \left( - \frac{2n+1}{4}+ m \right) c_1(s)
    \int_{\mathbb{R}} y^{2n} g^2 \,dy
    - c_2(s) \int_{\mathbb{R}} y^{2n} g^2 \,dy \\
    &\quad
    -2n \int_{\mathbb{R}} y^{2n-1} f_y g \,dy
    - 2n(2n-1) c_4(s) \int_{\mathbb{R}} y^{2n-2} f_{yy} g \,dy
    - 4n c_4(s) \int_{\mathbb{R}} y^{2n-1} f_{yy} g_y \,dy \\
    &\quad
    + \frac{c_4'(s)}{2} \int_{\mathbb{R}} y^{2n} f_{yy}^2 \,dy
    + \frac{c_1'(s)}{2} \int_{\mathbb{R}} y^{2n} g^2 \,dy
    + \int_{\mathbb{R}} y^{2n} gh \,dy
\end{align}
and
\begin{align}
    \frac{d}{ds} E_2(s)
    &=
    - \int_{\mathbb{R}} y^{2n} f_y^2 \,dy
    -c_4(s) \int_{\mathbb{R}} y^{2n} f_{yy}^2 \,dy 
    + \left( - \frac{2n+1}{4} + l \right)
    \int_{\mathbb{R}} y^{2n} f^2 \,dy \\
    &\quad
    + c_1(s) \int_{\mathbb{R}} y^{2n} g^2 \,dy
    + \left( - \frac{2n+1}{2}  + l + m \right) c_1(s)
    \int_{\mathbb{R}} y^{2n} f g \,dy
    - c_2(s) \int_{\mathbb{R}} y^{2n} f g \,dy  \\
    &\quad
    -2n \int_{\mathbb{R}} y^{2n-1} f f_y \,dy 
    - 4n c_4(s) \int_{\mathbb{R}} y^{2n-1} f_y f_{yy} \,dy
    - 2n(2n-1) c_4(s) \int_{\mathbb{R}} y^{2n-2} f f_{yy} \,dy \\
    &\quad
    + c_1'(s) \int_{\mathbb{R}} y^{2n} f g \,dy
    + \int_{\mathbb{R}} y^{2n} f h \,dy.
\end{align}
\end{lemma}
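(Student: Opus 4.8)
The plan is to establish both identities by a direct computation: differentiate $E_1$ and $E_2$ in $s$, substitute the evolution equations \eqref{eq:lem:YoWa} (and their $y$-derivatives) for $f_s$ and $g_s$, and reorganize by integration by parts. There is no conceptual obstacle; the work is bookkeeping, and the one point requiring care is the vanishing of boundary terms.

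First, for $E_1$ I differentiate under the integral sign,
\begin{align}
\frac{d}{ds} E_1(s)
= \int_{\mathbb{R}} y^{2n}\left( f_y f_{ys} + c_4 f_{yy} f_{yys} + c_1 g g_s \right) dy
+ \frac{c_4'(s)}{2}\int_{\mathbb{R}} y^{2n} f_{yy}^2 \, dy
+ \frac{c_1'(s)}{2}\int_{\mathbb{R}} y^{2n} g^2 \, dy.
\end{align}
From the first equation of \eqref{eq:lem:YoWa} one has $f_s = \tfrac{y}{2} f_y + l f + g$, so differentiating in $y$ expresses $f_{ys}$ and $f_{yys}$ through $f_y, f_{yy}, f_{yyy}, g_y, g_{yy}$; from the second equation one solves for $c_1 g_s$. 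After substitution, the ``transport'' contributions $\int y^{2n+1} f_y f_{yy}\,dy$, $\int y^{2n+1} f_{yy} f_{yyy}\,dy$, $\int y^{2n+1} g g_y\,dy$ are rewritten using $y f_y f_{yy} = \tfrac{y}{2}(f_y^2)_y$ etc.\ and integrated by parts, which together with the algebraic terms from $l f_y$, $l f_{yy}$, $m g$ produce the coefficients $-\tfrac{2n-1}{4}+l$, $-\tfrac{2n-3}{4}+l$, $-\tfrac{2n+1}{4}+m$. The cross terms are the delicate ones: $\int y^{2n} g f_{yy}\,dy$ and the $\int y^{2n} f_y g_y\,dy$ arising from $f_{ys}$ combine, via $\int y^{2n} g f_{yy}\,dy = -2n\int y^{2n-1} f_y g\,dy - \int y^{2n} f_y g_y\,dy$, into $-2n\int y^{2n-1} f_y g\,dy$; likewise $-c_4\int y^{2n} g f_{yyyy}\,dy$ together with $c_4\int y^{2n} f_{yy} g_{yy}\,dy$, after two integrations by parts, give $-2n(2n-1)c_4\int y^{2n-2} f_{yy} g\,dy - 4n c_4\int y^{2n-1} f_{yy} g_y\,dy$. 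Collecting all terms yields the stated formula for $\tfrac{d}{ds}E_1$.

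For $E_2$ the scheme is the same: I compute
\begin{align}
\frac{d}{ds} E_2(s)
= \int_{\mathbb{R}} y^{2n}\left( f f_s + c_1 f_s g + c_1 f g_s \right) dy
+ c_1'(s)\int_{\mathbb{R}} y^{2n} f g \, dy,
\end{align}
substitute $f_s$ and $c_1 g_s$ from \eqref{eq:lem:YoWa}, observe that the $fg$ contribution from $f f_s$ cancels the one coming from the undifferentiated $g$ term in the second equation, use $\tfrac{y}{2} f f_y = \tfrac{y}{4}(f^2)_y$ and $c_1\tfrac{y}{2}(f_y g + f g_y) = c_1\tfrac{y}{2}(fg)_y$ to extract the coefficients $-\tfrac{2n+1}{4}+l$ and $-\tfrac{2n+1}{2}+l+m$ after integration by parts, and integrate $\int y^{2n} f f_{yy}\,dy$ by parts once and $-c_4\int y^{2n} f f_{yyyy}\,dy$ by parts twice to obtain $-\int y^{2n} f_y^2\,dy$, $-c_4\int y^{2n} f_{yy}^2\,dy$ and the weighted lower-order cross terms $-2n\int y^{2n-1} f f_y\,dy$, $-4n c_4\int y^{2n-1} f_y f_{yy}\,dy$, $-2n(2n-1)c_4\int y^{2n-2} f f_{yy}\,dy$; the remaining terms $c_1 g^2$, $-c_2 fg$, $c_1' fg$, $fh$ carry through unchanged, producing the asserted identity.

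The only step that needs justification beyond algebra is the passage of the integrations by parts without boundary contributions: since the lemma is applied to $(f,g)$ in weighted Sobolev spaces with $h \in C([0,\infty);H^{0,n}(\mathbb{R}))$, all boundary terms at $y \to \pm\infty$ vanish, and one may, if desired, first verify the identities for smooth compactly supported data and conclude by density. I would also note that for $n=0$ every term with a factor $y^{2n-1}$ or $y^{2n-2}$ is multiplied by $2n$ or $2n(2n-1)$, hence disappears, so the apparent negative powers of $y$ are harmless.
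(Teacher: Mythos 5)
Your proposal is correct and follows essentially the same route as the paper's own proof (given in Appendix A for a slightly more general version with parameters $k$ and $c_3$, which reduces to the stated lemma for $k=\tfrac12$, $c_3\equiv 1$): differentiate the energies, substitute the system and its $y$-derivatives, and integrate by parts using exactly the pointwise identities you describe for the transport and cross terms. Your remarks on the vanishing of boundary terms and on the harmlessness of the $y^{2n-1}$, $y^{2n-2}$ factors when $n=0$ are consistent with the paper's treatment.
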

The case
$n = 0$
is given by
\cite[Lemma 3.1]{YoWa21}.
We will prove a slightly more general version of this lemma in Appendix A.

To bring out the decay property
of the solutions $(f,g)$ to \eqref{eq:f:g}
from the condition \eqref{eq:fgh:0},
we define the auxiliary functions
\begin{align}
    F(s,y) := \int_{-\infty}^y f(s,z) \,dz,\quad
    G(s,y) := \int_{-\infty}^y g(s,z) \,dz,\quad
    H(s,y) := \int_{-\infty}^y h(s,z) \,dz.
\end{align}
Then, by the following Hardy inequality,
the conditions
\eqref{eq:fgh:0} and
$f(s), g(s) \in H^{0,1}(\mathbb{R})$
ensure
$F(s), G(s) \in L^2(\mathbb{R})$.

\begin{lemma}[Hardy-type inequality {\cite[Lemma 3.9]{Wa17}}] \label{lem:hardy}
Let
$f = f(y) \in H^{0,1}(\mathbb{R})$
and satisfy
$\int_{\mathbb{R}} f(y) \,dy = 0$.
Let
$F(y) = \int_{-\infty}^y f(z) \,dz$.
Then, we have
\begin{align}
    \int_{\mathbb{R}} F(y)^2 \,dy
    \le 4 \int_{\mathbb{R}} y^2 f(y)^2 \,dy.
\end{align}
\end{lemma}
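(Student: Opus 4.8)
The plan is to exploit two features of the hypotheses simultaneously: the zero-mean condition forces $F$ to decay at \emph{both} ends, and the weight in $H^{0,1}(\mathbb{R})$ furnishes quantitative control of that decay. First note that $f \in H^{0,1}(\mathbb{R})$ gives $\int_{\mathbb{R}} (1+|y|)^2 f^2\,dy < \infty$, so in particular $J := \int_{\mathbb{R}} y^2 f^2\,dy < \infty$ and (via Cauchy--Schwarz against $(1+|y|)^{-1}$) $f \in L^1(\mathbb{R})$; thus $F$ is well defined, continuous, and absolutely continuous with $F' = f$ a.e. Because $\int_{\mathbb{R}} f = 0$ we may write $F(y) = \int_{-\infty}^y f\,dz = -\int_y^\infty f\,dz$, so $F(\pm\infty) = 0$. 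The whole inequality will then come from integrating $F^2$ by parts against the weight $y$, using the elementary identity $\frac{d}{dy}(yF^2) = F^2 + 2yFf$, followed by Cauchy--Schwarz.

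The first genuine step is a pointwise decay estimate for $F$. For $y > 0$, Cauchy--Schwarz yields $|F(y)| = \bigl|\int_y^\infty f\,dz\bigr| \le \bigl(\int_y^\infty z^{-2}\,dz\bigr)^{1/2}\bigl(\int_y^\infty z^2 f^2\,dz\bigr)^{1/2} = y^{-1/2}\bigl(\int_y^\infty z^2 f^2\,dz\bigr)^{1/2}$, and the same argument applied to $F(y) = \int_{-\infty}^y f\,dz$ handles $y < 0$. Consequently $|y|\,F(y)^2 \le \int_{|z|\ge|y|} z^2 f^2\,dz \to 0$ as $|y|\to\infty$, since $J < \infty$. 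This is what makes the boundary terms disappear.

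Next I would work on a truncated interval and integrate the identity above, obtaining $\int_{-M}^M F^2\,dy = M\bigl(F(M)^2 + F(-M)^2\bigr) - 2\int_{-M}^M yFf\,dy$. Writing $I_M := \int_{-M}^M F^2\,dy$ and $\varepsilon_M := M\bigl(F(M)^2 + F(-M)^2\bigr)$, and bounding the last integral by Cauchy--Schwarz as $2\bigl(\int_{-M}^M F^2\bigr)^{1/2}\bigl(\int_{-M}^M y^2 f^2\bigr)^{1/2} \le 2 I_M^{1/2} J^{1/2}$, I arrive at the self-referential estimate $I_M \le \varepsilon_M + 2 J^{1/2} I_M^{1/2}$. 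Reading this as a quadratic inequality in $x = I_M^{1/2} \ge 0$, namely $x^2 - 2J^{1/2}x - \varepsilon_M \le 0$, gives $I_M = x^2 \le \bigl(J^{1/2} + (J+\varepsilon_M)^{1/2}\bigr)^2$.

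Finally, since $I_M$ is nondecreasing in $M$ and $\varepsilon_M \to 0$ by the decay estimate, passing $M \to \infty$ (monotone convergence for the left-hand side) yields $\int_{\mathbb{R}} F^2\,dy = \lim_{M\to\infty} I_M \le \bigl(2J^{1/2}\bigr)^2 = 4J = 4\int_{\mathbb{R}} y^2 f^2\,dy$, which is exactly the claim, and in particular shows $F \in L^2(\mathbb{R})$. The one delicate point is that the finiteness of $\int_{\mathbb{R}} F^2$ is part of the conclusion, so one cannot simply divide by $\bigl(\int F^2\bigr)^{1/2}$ in a single unregularized integration by parts; the truncation together with the quadratic-inequality trick establishes finiteness and the sharp constant $4$ at the same time, and I expect no obstacle beyond this bookkeeping.
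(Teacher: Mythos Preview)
Your argument is correct and complete: the decay estimate $|y|F(y)^2 \to 0$ justifies dropping the boundary term, and the truncation-plus-quadratic-inequality device cleanly handles the fact that $F \in L^2$ is part of the conclusion rather than a hypothesis. There is nothing to compare against here, however, because the paper does not supply its own proof of this lemma; it is simply quoted from \cite[Lemma 3.9]{Wa17}. Your proof is essentially the standard one for this Hardy-type inequality (integrate $F^2$ against the derivative of $y$ and apply Cauchy--Schwarz), and the constant $4$ you obtain matches the statement.
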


From \eqref{eq:f:g},
$F$ and $G$ satisfy the following system.
\begin{align}\label{eq:F:G}
	\left\{ \begin{alignedat}{3}
	&F_{s} - \frac{y}{2} F_{y} = G,\\
	&\frac{r^{2}e^{-s}}{a} \left( G_{s} - \frac{y}{2} G_{y} - G \right) + \left( 1+ \frac{r'}{a} \right) G
	= F_{yy} - \frac{e^{-s}}{a} F_{yyyy}
	+ \frac{e^{s}}{a}  N \left( e^{-s} v_{y} \right) + H.
	\end{alignedat} \right.
\end{align}
We define the energies of
$(F,G)$
by
\begin{align}
    E_{01}(s)
    &:=
    \frac{1}{2} \int_{\mathbb{R}} F_y(s,y)^2 \,dy
    + \frac{e^{-s}}{2a}
    \int_{\mathbb{R}} F_{yy}(s,y)^2 \,dy 
    + \frac{r^2e^{-s}}{2a}
    \int_{\mathbb{R}} G(s,y)^2 \,dy,\\
    E_{02}(s)
    &:=
    \frac{1}{2} \int_{\mathbb{R}} F(s,y)^2 \,dy
    + \frac{r^2e^{-s}}{a}
    \int_{\mathbb{R}} F(s,y) G(s,y) \,dy.
\end{align}
\begin{lemma}\label{lem:E0}
We have
\begin{align}
    \frac{d}{ds} E_{01}(s)
    + \int_{\mathbb{R}} G^2 \,dy
    &=
    \frac{1}{2} E_{01}(s)
    - \frac{a'}{2ra^2} \int_{\mathbb{R}} F_{yy}^2 \,dy
    - \frac{ra'}{2a^2} \int_{\mathbb{R}} G^2 \,dy \\
    &\quad
    + \int_{\mathbb{R}} G \frac{e^{s}}{a}
    N \left( e^{-s} v_{y} \right) \,dy
    + \int_{\mathbb{R}} GH \,dy,\\
    \frac{d}{ds} E_{02}(s)
    + \frac{1}{2} E_{02}(s)
    + 2 E_{01}(s)
    &=
    2 \frac{r^2e^{-s}}{a} \int_{\mathbb{R}} G^2 \,dy
     + \left( \frac{r'}{a}- \frac{ra'}{a^2} \right)
     \int_{\mathbb{R}} FG \,dy \\
     &\quad
     + \int_{\mathbb{R}} F \frac{e^s}{a} N\left( e^{-s} v_y \right) \,dy
     + \int_{\mathbb{R}} FH \,dy.
\end{align}
\end{lemma}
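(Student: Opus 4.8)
The plan is to deduce Lemma~\ref{lem:E0} directly from the general energy identity of Lemma~\ref{lem:YoWa:en}. The system \eqref{eq:F:G} for $(F,G)$ has exactly the form \eqref{eq:lem:YoWa} with
\[
	n = 0, \quad l = 0, \quad m = 1, \quad
	c_1(s) = \frac{r^2 e^{-s}}{a}, \quad
	c_2(s) = \frac{r'}{a}, \quad
	c_4(s) = \frac{e^{-s}}{a},
\]
and with the source term $h$ of Lemma~\ref{lem:YoWa:en} replaced by $\frac{e^s}{a} N(e^{-s} v_y) + H$; since $F_y = f$, $F_{yy} = f_y$, $G_y = g$, the energies $E_1, E_2$ of that lemma coincide with $E_{01}, E_{02}$. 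First one checks that this source term lies in $C([0,\infty); L^2(\mathbb{R}))$: for the $H$-part this is \eqref{eq:fgh:0} together with the Hardy-type inequality of Lemma~\ref{lem:hardy}, and for the nonlinear part it follows from \eqref{eq:sol:class}, the one-dimensional embedding $H^1 \hookrightarrow L^\infty$ applied to $v_y$, and the structure of $N$ in Assumption~(N). Hence Lemma~\ref{lem:YoWa:en} applies.

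Taking $n = 0$ annihilates every boundary term carrying a factor $n$ in Lemma~\ref{lem:YoWa:en}, so its two identities collapse to
\begin{align*}
	\frac{d}{ds} E_{01}
	&= - \int_{\mathbb{R}} G^2 \,dy
	+ \frac14 \int_{\mathbb{R}} F_y^2 \,dy
	+ \left( \tfrac34 c_4 + \tfrac12 c_4' \right) \int_{\mathbb{R}} F_{yy}^2 \,dy
	+ \left( \tfrac34 c_1 - c_2 + \tfrac12 c_1' \right) \int_{\mathbb{R}} G^2 \,dy \\
	&\quad
	+ \int_{\mathbb{R}} G \left( \frac{e^s}{a} N(e^{-s} v_y) + H \right) dy
\end{align*}
and
\begin{align*}
	\frac{d}{ds} E_{02}
	&= - \int_{\mathbb{R}} F_y^2 \,dy
	- c_4 \int_{\mathbb{R}} F_{yy}^2 \,dy
	- \frac14 \int_{\mathbb{R}} F^2 \,dy
	+ c_1 \int_{\mathbb{R}} G^2 \,dy
	+ \left( \tfrac12 c_1 - c_2 + c_1' \right) \int_{\mathbb{R}} FG \,dy \\
	&\quad
	+ \int_{\mathbb{R}} F \left( \frac{e^s}{a} N(e^{-s} v_y) + H \right) dy.
\end{align*}

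It then remains only to compute $c_1'$ and $c_4'$ and to reorganise. Since $e^s = R(t)+1$, one has $\frac{ds}{dt} = \frac{r}{e^s}$, equivalently $\frac{dt}{ds} = \frac{e^s}{r}$, so differentiating in $s$ (keeping in mind that $a'$ and $r'$ denote $t$-derivatives evaluated at $t(s)$, per the convention below \eqref{eq:v:w}) gives
\[
	c_4'(s) = - \frac{e^{-s}}{a} - \frac{a'}{a^2 r}, \qquad
	c_1'(s) = \frac{2 r'}{a} - \frac{r^2 e^{-s}}{a} - \frac{r a'}{a^2}.
\]
Extracting $\tfrac12 E_{01}$ from the right-hand side of the first identity, and adding $\tfrac12 E_{02} + 2E_{01}$ to the second, the only algebra needed is the elementary identities
\[
	\tfrac12 c_4 + \tfrac12 c_4' = - \frac{a'}{2 a^2 r}, \qquad
	\tfrac12 c_1 - c_2 + \tfrac12 c_1' = - \frac{r a'}{2 a^2}, \qquad
	c_1 - c_2 + c_1' = \frac{r'}{a} - \frac{r a'}{a^2},
\]
together with the observation that adding $2E_{01}$ cancels the $F_y^2$ and $F_{yy}^2$ terms of the second identity while adding $\tfrac12 E_{02}$ cancels its $F^2$ term. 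This produces exactly the two identities of Lemma~\ref{lem:E0}. There is no genuine obstacle; the proof is pure bookkeeping, the one point to watch being the consistent distinction between $s$-derivatives and the $t$-derivatives $a', r'$, which is what inserts the factor $e^s/r$ in the formulas for $c_1'$ and $c_4'$.
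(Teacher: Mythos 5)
Your proposal is correct and follows essentially the same route as the paper: apply Lemma \ref{lem:YoWa:en} to the system \eqref{eq:F:G} with $l=0$, $m=1$, $n=0$, $c_1 = r^2e^{-s}/a$, $c_2 = r'/a$, $c_4 = e^{-s}/a$ and source $\frac{e^s}{a}N(e^{-s}v_y)+H$, compute $c_1'$ and $c_4'$ via $\frac{dt}{ds}=e^s/r$ (your formulas agree with \eqref{eq:c1prime}--\eqref{eq:c4prime}), and reorganise; your three "elementary identities" are exactly the cancellations the paper carries out.
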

\begin{proof}
We apply Lemma \ref{lem:YoWa:en} as $f=F$, $g=G$ and $h=\frac{e^s}{a}N(e^{-s}v_y) +H$  
with
$l = 0$,
$m = 1$,
$n = 0$,
$c_1(s) = r^2e^{-s}/a$,
$c_2(s) = r'/a$,
and
$c_4(s) = e^{-s}/a$.
Noting that
\begin{align}
    \frac{d}{ds} r(t(s))
    = \frac{d}{ds} r(R^{-1}(e^s-1))
    = \frac{r'(t(s))}{r(t(s))} e^s,
\end{align}
we first have
\begin{align}
\label{eq:c1prime}
    c_1'(s)
    &= \frac{1}{a^2}
    \left( 2 r \frac{dr}{ds} a e^{-s}
    -r^2 a e^{-s}
    -r^2 \frac{da}{ds} e^{-s}
    \right)
    =
    \frac{2r'}{a} - \frac{r^2 e^{-s}}{a}
    - \frac{ra'}{a^2},\\
\label{eq:c4prime}
    c_4'(s)
    &= \frac{1}{a^2}
    \left( -a e^{-s} - \frac{da}{ds} e^{-s}
    \right) 
    = -\frac{e^{-s}}{a} - \frac{a'}{ra^2}.
\end{align}
Thus, we obtain
\begin{align}
    \frac{d}{ds} E_{01}(s)
    &=
    - \int_{\mathbb{R}} G^2 \,dy
    + \frac{1}{4} \int_{\mathbb{R}} F_y^2 \,dy
    + \frac{3e^{-s}}{4a} \int_{\mathbb{R}} F_{yy}^2 \,dy \\
    &\quad + \frac{3r^2e^{-s}}{4a}
    \int_{\mathbb{R}} G^2 \,dy
    - \frac{r'}{a} \int_{\mathbb{R}} G^2 \,dy  - \frac{1}{2}
    \left( \frac{e^{-s}}{a} +
    \frac{a'}{ra^2} \right)
    \int_{\mathbb{R}} F_{yy}^2 \,dy \\
    &\quad 
    + \frac{1}{2}
    \left( \frac{2 r'}{a} 
    - \frac{r^2 e^{-s}}{a}
    - \frac{r a'}{a^2}
    \right)
    \int_{\mathbb{R}} G^2 \,dy\\
    &\quad
    + \int_{\mathbb{R}} G \frac{e^{s}}{a}
    N \left( e^{-s} v_{y} \right) \,dy
    + \int_{\mathbb{R}} GH \,dy \\
    &=
    - \int_{\mathbb{R}} G^2 \,dy
    + \frac{1}{2} E_{01}(s)
    - \frac{a'}{2ra^2} \int_{\mathbb{R}} F_{yy}^2 \,dy
    - \frac{ra'}{2a^2} \int_{\mathbb{R}} G^2 \,dy \\
    &\quad
    + \int_{\mathbb{R}} G \frac{e^{s}}{a}
    N \left( e^{-s} v_{y} \right) \,dy
    + \int_{\mathbb{R}} GH \,dy.
\end{align}
Next, we compute the derivative of
$E_{02}(s)$.
By Lemma \ref{lem:YoWa:en},
we obtain
\begin{align}
    \frac{d}{ds} E_{02}(s)
    &=
    - \int_{\mathbb{R}} F_y^2 \,dy
    - \frac{e^{-s}}{a} \int_{\mathbb{R}} F_{yy}^2 \,dy
     - \frac{1}{4} \int_{\mathbb{R}} F^2 \,dy \\
     &\quad
     + \frac{r^2 e^{-s}}{a} \int_{\mathbb{R}} G^2 \,dy
     + \frac{r^2 e^{-s}}{2a} \int_{\mathbb{R}} FG \,dy
     - \frac{r'}{a} \int_{\mathbb{R}} FG \,dy \\
     &\quad
     + \int_{\mathbb{R}} F \frac{e^s}{a} N\left( e^{-s} v_y \right) \,dy
     + \int_{\mathbb{R}} FH \,dy \\
     &\quad
     + \left( \frac{2r'}{a} - \frac{r^2 e^{-s}}{a} - \frac{ra'}{a^2} \right)
     \int_{\mathbb{R}} FG \,dy \\
     &=
     - \frac{1}{2} E_{02}(s) - 2E_{01}(s) \\
     &\quad
     + 2 \frac{r^2e^{-s}}{a} \int_{\mathbb{R}} G^2 \,dy
     + \left( \frac{r'}{a}- \frac{ra'}{a^2} \right)
     \int_{\mathbb{R}} FG \,dy \\
     &\quad
     + \int_{\mathbb{R}} F \frac{e^s}{a} N\left( e^{-s} v_y \right) \,dy
     + \int_{\mathbb{R}} FH \,dy.
\end{align}
This completes the proof.
\end{proof}

Next, for
$n = 0, 1$,
we define the energies of
$(f,g)$
by
\begin{align}
    E_{11}^{(n)}(s)
    &:=
    \frac{1}{2} \int_{\mathbb{R}} y^{2n} f_y(s,y)^2 \,dy
    + \frac{e^{-s}}{2a}
    \int_{\mathbb{R}} y^{2n} f_{yy}(s,y)^2 \,dy 
    + \frac{r^2e^{-s}}{2a}
    \int_{\mathbb{R}} y^{2n} g(s,y)^2 \,dy,\\
    E_{12}^{(n)}(s)
    &:=
    \frac{1}{2} \int_{\mathbb{R}} y^{2n} f(s,y)^2 \,dy
    + \frac{r^2e^{-s}}{a}
    \int_{\mathbb{R}} y^{2n} f(s,y) g(s,y) \,dy.
\end{align}

\begin{lemma}\label{lem:E1}
For
$n= 0, 1$,
we have
\begin{align}
    \frac{d}{ds} E_{11}^{(n)} (s)
    + \int_{\mathbb{R}} y^{2n} g^2 \,dy
    &=
    \frac{3-2n}{2} E_{11}^{(n)}(s)
    -2n \int_{\mathbb{R}} y^{2n-1} f_y g \,dy
    \\
    &\quad
    -2n(2n-1) \frac{e^{-s}}{a} \int_{\mathbb{R}} y^{2n-2} f_{yy}g \,dy
    -4n \frac{e^{-s}}{a} \int_{\mathbb{R}} y^{2n-1} f_{yy} g_{y} \,dy \\
    &\quad
    -\frac{a'}{2ra^2} \int_{\mathbb{R}} y^{2n} f_{yy}^2 \,dy
    - \frac{ra'}{2a^2} \int_{\mathbb{R}} y^{2n} g^2 \,dy \\
    &\quad
    + \int_{\mathbb{R}} y^{2n} g
    \left( \frac{e^{s}}{a} \partial_y (N(e^{-s} v_y)) + h \right) \,dy
\end{align}
and
\begin{align}
    \frac{d}{ds} E_{12}^{(n)} (s)
    + \frac{1}{2} E_{12}^{(n)}(s)
    + 2 E_{11}^{(n)}(s)
    &=
    (1-n)E_{12}^{(n)}(s)
    + \frac{2r^2e^{-s}}{a} \int_{\mathbb{R}} y^{2n} g^2 \,dy
    -2n \int_{\mathbb{R}} y^{2n-1} f f_y \,dy
    \\
    &\quad
    -4n \frac{e^{-s}}{a} \int_{\mathbb{R}} y^{2n-1} f_y f_{yy} \,dy
    -2n (2n-1) \frac{e^{-s}}{a} \int_{\mathbb{R}} y^{2n-2} f f_{yy} \,dy \\
    &\quad
    + \left( \frac{r'}{a} - \frac{ra'}{a^2} \right) \int_{\mathbb{R}} y^{2n} fg \,dy
    + \int_{\mathbb{R}} y^{2n} f
    \left( \frac{e^{s}}{a} \partial_y (N(e^{-s}v_y)) + h \right) \,dy.
\end{align}
\end{lemma}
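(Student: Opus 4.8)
The plan is to derive both identities directly from the general energy identity of Lemma~\ref{lem:YoWa:en}, specialized to the system \eqref{eq:f:g} for $(f,g)$, exactly in the way the identities of Lemma~\ref{lem:E0} were obtained. Concretely, I would match \eqref{eq:f:g} against \eqref{eq:lem:YoWa}: this forces
\[
f = f, \quad g = g, \quad h = \frac{e^s}{a}\,\partial_y\!\left(N(e^{-s}v_y)\right) + h, \quad l = \tfrac12, \quad m = \tfrac32,
\]
together with $c_1(s) = r^2 e^{-s}/a$, $c_2(s) = r'/a$, $c_4(s) = e^{-s}/a$, and with the weight exponent $n$ taken to be $0$ or $1$. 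Since these $c_1$ and $c_4$ are exactly the ones used in the proof of Lemma~\ref{lem:E0}, their $s$-derivatives are already recorded in \eqref{eq:c1prime} and \eqref{eq:c4prime}, so no fresh computation of $c_1'$, $c_4'$ is needed.

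Next I would substitute these data into the first identity of Lemma~\ref{lem:YoWa:en}. With $l = \tfrac12$ and $m = \tfrac32$, the coefficients of $\int y^{2n}f_y^2$, $\int y^{2n}f_{yy}^2$ and $\int y^{2n}g^2$ coming from the diagonal part of that identity become $\tfrac{3-2n}{4}$, then $\tfrac{3-2n}{4}$ times $c_4$, and $\tfrac{3-2n}{4}$ times $c_1$, respectively, once the contributions $\tfrac12 c_4'$ and $\tfrac12 c_1'$ are folded in and simplified via \eqref{eq:c1prime}--\eqref{eq:c4prime}; the leftover pieces of those derivatives produce precisely $-\tfrac{a'}{2ra^2}\int y^{2n}f_{yy}^2$ and $-\tfrac{ra'}{2a^2}\int y^{2n}g^2$, while $-c_2\int y^{2n}g^2$ cancels the $+\tfrac{r'}{a}\int y^{2n}g^2$ generated by $\tfrac12 c_1'$. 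Recognizing that $\tfrac{3-2n}{4}\big(f_y^2 + c_4 f_{yy}^2 + c_1 g^2\big)$ integrates to $\tfrac{3-2n}{2}E_{11}^{(n)}$ then yields the first asserted identity, the $y^{2n-1}$- and $y^{2n-2}$-weighted cross terms of Lemma~\ref{lem:YoWa:en} being transcribed unchanged (and of course vanishing when $n=0$).

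The second identity is obtained the same way from the second identity of Lemma~\ref{lem:YoWa:en}. Here $l + m = 2$, so the coefficient of $\int y^{2n}fg$ from the diagonal part is $\big(-\tfrac{2n+1}{2} + 2\big)c_1 = \tfrac{3-2n}{2}c_1$; adding the $c_1'\int y^{2n}fg$ term and subtracting $c_2\int y^{2n}fg$ collapses this to $\big(\tfrac{1-2n}{2}\tfrac{r^2e^{-s}}{a} + \tfrac{r'}{a} - \tfrac{ra'}{a^2}\big)\int y^{2n}fg$, and the coefficient of $\int y^{2n}f^2$ is $-\tfrac{2n+1}{4} + \tfrac12 = \tfrac{1-2n}{4}$. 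Writing $\tfrac{1-2n}{2} = (1-n) - \tfrac12$ and moving $-\tfrac12 E_{12}^{(n)}$ and $-2E_{11}^{(n)}$ to the left-hand side (the latter accounting for $-\int y^{2n}f_y^2 - c_4\int y^{2n}f_{yy}^2 - \tfrac{r^2e^{-s}}{a}\int y^{2n}g^2$, plus the extra $+\tfrac{2r^2e^{-s}}{a}\int y^{2n}g^2$ left over from $c_1\int y^{2n}g^2$) produces the stated form, with the cross terms again transcribed directly.

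I do not expect a genuine obstacle here: the whole argument is a bookkeeping exercise. The one place that demands care is verifying that, after inserting the explicit $c_1'$ and $c_4'$ from \eqref{eq:c1prime}--\eqref{eq:c4prime}, the diagonal coefficients really coalesce into the clean multiples $\tfrac{3-2n}{2}E_{11}^{(n)}$ and $(1-n)E_{12}^{(n)}$ of the energies, with exactly the $a'$-correction terms and no stray remainder, rather than into a messier expression; once the algebra is carried out honestly this falls out, uniformly in $n \in \{0,1\}$.
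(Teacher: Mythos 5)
Your proposal is correct and follows exactly the paper's own proof: apply Lemma \ref{lem:YoWa:en} to the system \eqref{eq:f:g} with $l=\tfrac12$, $m=\tfrac32$, $c_1=r^2e^{-s}/a$, $c_2=r'/a$, $c_4=e^{-s}/a$, $n=0,1$, and the shifted source $\frac{e^s}{a}\partial_y(N(e^{-s}v_y))+h$, then simplify using \eqref{eq:c1prime}--\eqref{eq:c4prime}. The coefficient bookkeeping you describe (diagonal terms collapsing to $\tfrac{3-2n}{4}(f_y^2+c_4f_{yy}^2+c_1g^2)$ with the $a'$-corrections, and $\tfrac{1-2n}{2}+\tfrac12=1-n$ for $E_{12}^{(n)}$) checks out and matches the paper's computation.
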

\begin{proof}
For $n= 0,1$,
we apply Lemma \ref{lem:YoWa:en} as $f=f$, $g=g$ and $h=\frac{e^s}{a}\partial_y N(e^{-s}v_y) +h$ with
$l = \frac{1}{2}$,
$m = \frac{3}{2}$,
$c_1(s) = r^2e^{-s}/a$,
$c_2(s) = r'/a$,
and
$c_4(s) = e^{-s}/a$.
Using \eqref{eq:c1prime} and \eqref{eq:c4prime},
we have
\begin{align}
    \frac{d}{ds}E_{11}^{(n)}(s)
    &=
    - \int_{\mathbb{R}} y^{2n} g^2 \,dy
    + \frac{3-2n}{4} \int_{\mathbb{R}} y^{2n} f_y^2 \,dy
    + \frac{5-2n}{4} \frac{e^{-s}}{a} \int_{\mathbb{R}} y^{2n} f_{yy}^2 \,dy \\
    &\quad 
    + \frac{5-2n}{4} \frac{r^2e^{-s}}{a}
    \int_{\mathbb{R}} y^{2n} g^2 \,dy
    - \frac{r'}{a} \int_{\mathbb{R}} y^{2n} g^2 \,dy 
    -2n \int_{\mathbb{R}} y^{2n-1} f_y g \,dy \\
    &\quad 
    -2n(2n-1) \frac{e^{-s}}{a} \int_{\mathbb{R}} y^{2n-2} f_{yy}g \,dy
    -4n \frac{e^{-s}}{a} \int_{\mathbb{R}} y^{2n-1} f_{yy} g_{y} \,dy \\
    &\quad
    -\frac{1}{2} \left( \frac{e^{-s}}{a} + \frac{a'}{ra^2} \right) \int_{\mathbb{R}} y^{2n} f_{yy}^2 \,dy
    + \frac{1}{2} \left( \frac{2r'}{a} - \frac{r^2e^{-s}}{a} - \frac{ra'}{a^2} \right) \int_{\mathbb{R}} y^{2n} g^2 \,dy \\
    &\quad
    + \int_{\mathbb{R}} y^{2n} g
    \left( \frac{e^{s}}{a} \partial_y (N(e^{-s} v_y)) + h \right) \,dy \\
    &=
    - \int_{\mathbb{R}} y^{2n} g^2 \,dy
    + \frac{3-2n}{2} E_{11}^{(n)}(s) \\
    &\quad
    -2n \int_{\mathbb{R}} y^{2n-1} f_y g \,dy
    -2n(2n-1) \frac{e^{-s}}{a} \int_{\mathbb{R}} y^{2n-2} f_{yy}g \,dy
    -4n \frac{e^{-s}}{a} \int_{\mathbb{R}} y^{2n-1} f_{yy} g_{y} \,dy \\
    &\quad
    -\frac{a'}{2ra^2} \int_{\mathbb{R}} y^{2n} f_{yy}^2 \,dy
    - \frac{ra'}{2a^2} \int_{\mathbb{R}} y^{2n} g^2 \,dy \\
    &\quad
    + \int_{\mathbb{R}} y^{2n} g
    \left( \frac{e^{s}}{a} \partial_y (N(e^{-s} v_y)) + h \right) \,dy.
\end{align}
Similarly, we calculate
\begin{align}
    \frac{d}{ds} E_{12}^{(n)}(s)
    &=
    - \int_{\mathbb{R}} y^{2n} f_y^2 \,dy
    - \frac{e^{-s}}{a} \int_{\mathbb{R}} y^{2n} f_{yy}^2 \,dy
    + \frac{1-2n}{4} \int_{\mathbb{R}} y^{2n} f^2 \,dy \\
    &\quad
    + \frac{r^2e^{-s}}{a} \int_{\mathbb{R}} y^{2n} g^2 \,dy
    + \frac{3-2n}{2} \frac{r^2e^{-s}}{a} \int_{\mathbb{R}} y^{2n} fg \,dy
    - \frac{r'}{a} \int_{\mathbb{R}} y^{2n} fg \,dy \\
    &\quad
    -2n \int_{\mathbb{R}} y^{2n-1} f f_y \,dy
    -4n \frac{e^{-s}}{a} \int_{\mathbb{R}} y^{2n-1} f_y f_{yy} \,dy
    -2n (2n-1) \frac{e^{-s}}{a} \int_{\mathbb{R}} y^{2n-2} f f_{yy} \,dy \\
    &\quad
    + \left( \frac{2r'}{a} - \frac{r^2e^{-s}}{a} - \frac{ra'}{a^2} \right) \int_{\mathbb{R}} y^{2n} fg \,dy
    + \int_{\mathbb{R}} y^{2n} f
    \left( \frac{e^{s}}{a} \partial_y (N(e^{-s}v_y)) + h \right) \,dy \\
    &=
    \frac{1-2n}{2} E_{12}^{(n)}(s)
    -2 E_{11}^{(n)}(s)
    + \frac{2r^2e^{-s}}{a} \int_{\mathbb{R}} y^{2n} g^2 \,dy\\
    &\quad
    -2n \int_{\mathbb{R}} y^{2n-1} f f_y \,dy
    -4n \frac{e^{-s}}{a} \int_{\mathbb{R}} y^{2n-1} f_y f_{yy} \,dy
    -2n (2n-1) \frac{e^{-s}}{a} \int_{\mathbb{R}} y^{2n-2} f f_{yy} \,dy \\
    &\quad
    + \left( \frac{r'}{a} - \frac{ra'}{a^2} \right) \int_{\mathbb{R}} y^{2n} fg \,dy
    + \int_{\mathbb{R}} y^{2n} f
    \left( \frac{e^{s}}{a} \partial_y (N(e^{-s}v_y)) + h \right) \,dy.
\end{align}
This completes the proof.
\end{proof}

Finally, to control the bad term
$\displaystyle -4 \frac{e^{-s}}{a}\int_{\mathbb{R}} y f_{yy} g_y \,dy$
in
$\dfrac{d}{ds}E_{11}^{(1)}(s)$,
we consider the energies
\begin{align}
    E_{21}(s)
    &=
    \frac{1}{2} \int_{\mathbb{R}}
    \left( f_{yy}^2 + \frac{e^{-s}}{a}f_{yyy}^2 + \frac{r^2e^{-s}}{a} g_y^2 \right)\,dy,\\
    E_{22}(s)
    &=
    \int_{\mathbb{R}}
    \left( \frac{1}{2} f_y^2 + \frac{r^2e^{-s}}{a} f_y g_y \right) \,dy.
\end{align}
Since
$(f_y, g_y)$
satisfies the equations
\begin{align}
    \left\{\begin{alignedat}{3}
    &(f_y)_s - \frac{y}{2} (f_y)_y - f_y = g_y,\\
    &\frac{r^2e^{-s}}{a}
    \left( (g_y)_s - \frac{y}{2}(g_y)_y -2 g_y
    \right) + g_y + \frac{r'}{a} g_y 
    =
    (f_y)_{yy} - \frac{e^{-s}}{a} (f_y)_{yyyy}
    + \frac{e^s}{a} \partial_y^2 N(e^{-s} v_y)) + h_y,
    \end{alignedat} \right.
\end{align}
we have the following lemma.
\begin{lemma}\label{lem:E2}
We have
\begin{align}
    \frac{d}{ds}E_{21}(s)
    + \int_{\mathbb{R}} g_y^2 \,dy
    &=
    \frac{5}{2} E_{21}(s)
    - \frac{a'}{2ra^2} \int_{\mathbb{R}} f_{yyy}^2 \,dy
    - \frac{ra'}{2a^2} \int_{\mathbb{R}} g_y^2 \,dy \\
    &\quad
    + \int_{\mathbb{R}} g_y
    \left( \frac{e^s}{a}\partial_y^2 (N(e^{-s}v_y)) + h_y \right) \,dy
\end{align}
and
\begin{align}
    \frac{d}{ds} E_{22}(s) + 2 E_{21}(s)
    &=
    \frac{3}{2} E_{22}(s) 
    + 2 \frac{r^2e^{-s}}{a} \int_{\mathbb{R}} g_y^2 \,dy
    + \left( \frac{r'}{a} - \frac{ra'}{a^2} \right)
    \int_{\mathbb{R}} f_y g_y \,dy \\
    &\quad
    + \int_{\mathbb{R}} f_y
    \left( \frac{e^{s}}{a} \partial_y^2 (N(e^{-s}v_y)) + h_y \right)\,dy.
\end{align}
\end{lemma}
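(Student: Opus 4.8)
The plan is to observe that the pair $(f_y,g_y)$ solves a system of exactly the form \eqref{eq:lem:YoWa}, so that both identities follow directly from Lemma \ref{lem:YoWa:en}, in precisely the way Lemmas \ref{lem:E0} and \ref{lem:E1} were obtained. First I would differentiate the system \eqref{eq:f:g} in $y$. Commuting $\partial_y$ past the operator $-\tfrac{y}{2}\partial_y$ in the first equation produces an extra $-\tfrac12 f_y$, so that $(f_y)_s - \tfrac{y}{2}(f_y)_y - f_y = g_y$; the same commutation in the second equation turns the coefficient $-\tfrac32$ of $g$ into $-2$ for $g_y$, while the $s$-dependent coefficient $1+r'/a$ multiplying $g$ is untouched by $\partial_y$ and carries over verbatim to $g_y$, and the source becomes $\tfrac{e^s}{a}\partial_y^2(N(e^{-s}v_y)) + h_y$. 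This is exactly the system displayed just before the statement.

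Next I would apply Lemma \ref{lem:YoWa:en} with $f = f_y$, $g = g_y$, $h = \tfrac{e^s}{a}\partial_y^2(N(e^{-s}v_y)) + h_y$, and the parameters $l = 1$, $m = 2$, $n = 0$, $c_1(s) = r^2 e^{-s}/a$, $c_2(s) = r'/a$, $c_4(s) = e^{-s}/a$. With these choices the lemma's energies $E_1(s)$ and $E_2(s)$ coincide with $E_{21}(s)$ and $E_{22}(s)$ respectively, since $(f_y)_y = f_{yy}$ and $(f_y)_{yy} = f_{yyy}$. Substituting the explicit derivatives \eqref{eq:c1prime} for $c_1'(s)$ and \eqref{eq:c4prime} for $c_4'(s)$ into the $E_1$-identity, the contributions $\tfrac54\int f_{yy}^2$, $\tfrac74 c_4\int f_{yyy}^2$, $\tfrac74 c_1\int g_y^2$ recombine, after absorbing the $\tfrac{e^{-s}}{a}$- and $\tfrac{r^2e^{-s}}{a}$-parts of $\tfrac{c_4'}{2}$ and $\tfrac{c_1'}{2}$, into $\tfrac52 E_{21}(s)$; the $\tfrac{r'}{a}\int g_y^2$ terms from $\tfrac{c_1'}{2}$ and $-c_2$ cancel, leaving only the friction remainders $-\tfrac{a'}{2ra^2}\int f_{yyy}^2$ and $-\tfrac{ra'}{2a^2}\int g_y^2$. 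An analogous bookkeeping in the $E_2$-identity yields $\tfrac32 E_{22}(s) - 2E_{21}(s) + 2\tfrac{r^2e^{-s}}{a}\int g_y^2 + \bigl(\tfrac{r'}{a}-\tfrac{ra'}{a^2}\bigr)\int f_y g_y$ together with the source term $\int f_y(\tfrac{e^s}{a}\partial_y^2(N(e^{-s}v_y))+h_y)\,dy$, which is the second identity. This arithmetic is word-for-word parallel to the $n=0$ case in the proof of Lemma \ref{lem:E1}, so I would only indicate it briefly.

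There is essentially no obstacle here; the one point deserving a line of care is verifying that differentiation in $y$ genuinely produces the claimed system — in particular the shift of $l$ from $\tfrac12$ to $1$ and of $m$ from $\tfrac32$ to $2$ arising from the commutator with $-\tfrac{y}{2}\partial_y$, and the fact that $1+r'/a$ is $y$-independent and hence survives unchanged. Once the system is correctly identified, Lemma \ref{lem:YoWa:en} is used as a black box and the computation is the same substitution already carried out twice.
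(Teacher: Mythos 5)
Your proposal is correct and follows exactly the paper's own proof: the paper likewise differentiates the system \eqref{eq:f:g} in $y$, notes that $(f_y,g_y)$ solves \eqref{eq:lem:YoWa} with $l=1$, $m=2$, $n=0$, $c_1=r^2e^{-s}/a$, $c_2=r'/a$, $c_4=e^{-s}/a$, and substitutes \eqref{eq:c1prime}--\eqref{eq:c4prime} into Lemma \ref{lem:YoWa:en}. The bookkeeping you describe (the coefficients $\tfrac54$, $\tfrac74$, $\tfrac74$ recombining into $\tfrac52 E_{21}$ and the $r'/a$ cancellation) matches the paper's computation line for line.
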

\begin{proof}
Applying Lemma \ref{lem:YoWa:en} as $f=f_y$, $g=g_y$ and $h=\frac{e^s}{a}\partial_{yy} N(e^{-s}v_y) +h_y$ with
$l =1$,
$m =2$,
$n = 0$,
$c_1(s) = \frac{r^2e^{-s}}{a}$,
$c_2(s) = \frac{r'}{a}$,
and
$c_4(s) = \frac{e^{-s}}{a}$,
and also using
\eqref{eq:c1prime} and \eqref{eq:c4prime},
we have
\begin{align}
    \frac{d}{ds} E_{21}(s) 
    &=
    - \int_{\mathbb{R}} g_y^2 \,dy
    + \frac{5}{4} \int_{\mathbb{R}}  f_{yy}^2 \,dy
    + \frac{7}{4} \frac{e^{-s}}{a} \int_{\mathbb{R}} f_{yyy}^2 \,dy\\
    &\quad
    + \frac{7}{4} \frac{r^2 e^{-s}}{a} \int_{\mathbb{R}} g_y^2 \,dy
    - \frac{r'}{a} \int_{\mathbb{R}} g_y^2 \,dy \\
    &\quad
    - \frac{1}{2} \left( \frac{e^{-s}}{a} + \frac{a'}{ra^2} \right)
    \int_{\mathbb{R}} f_{yyy}^2 \,dy
    + \frac{1}{2} \left( \frac{2r'}{a}- \frac{r^2e^{-s}}{a} - \frac{ra'}{a^2} \right)
    \int_{\mathbb{R}} g_y^2 \,dy \\
    &\quad
    + \int_{\mathbb{R}} g_y
    \left( \frac{e^s}{a}\partial_y^2 (N(e^{-s}v_y)) + h_y \right) \,dy \\
    &=
    - \int_{\mathbb{R}} g_y^2 \,dy
    + \frac{5}{2} E_{21}(s) \\
    &\quad
    - \frac{a'}{2ra^2} \int_{\mathbb{R}} f_{yyy}^2 \,dy
    - \frac{ra'}{2a^2} \int_{\mathbb{R}} g_y^2 \,dy \\
    &\quad
    + \int_{\mathbb{R}} g_y
    \left( \frac{e^s}{a}\partial_y^2 (N(e^{-s}v_y)) + h_y \right) \,dy.
\end{align}
Similarly, we have
\begin{align}
    \frac{d}{ds} E_{22}(s)
    &=
    - \int_{\mathbb{R}} f_{yy}^2 \,dy
    - \frac{e^{-s}}{a} \int_{\mathbb{R}} f_{yyy}^2 \,dy
    + \frac{3}{4} \int_{\mathbb{R}} f_y^2 \,dy \\
    &\quad
    + \frac{r^2 e^{-s}}{a} \int_{\mathbb{R}} g_y^2 \,dy
    + \frac{5}{2} \frac{r^2 e^{-s}}{a}
    \int_{\mathbb{R}} f_y g_y \,dy
    - \frac{r'}{a} \int_{\mathbb{R}} f_y g_y \,dy \\
    &\quad
    + \left( \frac{2r'}{a} - \frac{r^2 e^{-s}}{a} - \frac{ra'}{a^2} \right)
    \int_{\mathbb{R}} f_y g_y \,dy
    + \int_{\mathbb{R}} f_y
    \left( \frac{e^{s}}{a} \partial_y^2 (N(e^{-s}v_y)) + h_y \right)\,dy \\
    &=
    \frac{3}{2} E_{22}(s) - 2 E_{21}(s) \\
    &\quad
    + 2 \frac{r^2e^{-s}}{a} \int_{\mathbb{R}} g_y^2 \,dy
    + \left( \frac{r'}{a} - \frac{ra'}{a^2} \right)
    \int_{\mathbb{R}} f_y g_y \,dy \\
    &\quad
    + \int_{\mathbb{R}} f_y
    \left( \frac{e^{s}}{a} \partial_y^2 (N(e^{-s}v_y)) + h_y \right)\,dy.
\end{align}
This completes the proof.
\end{proof}

Finally, we define
\begin{align}
    E_{m1}(s)
    &:= \frac{1}{2} \frac{r^2e^{-s}}{a} m_s(s)^2,\\
    E_{m2}(s)
    &:= \frac{1}{2} m(s)^2 + \frac{r^2e^{-s}}{a} m(s) m_s(s).
\end{align}
Then, we have the following energy identities.
\begin{lemma}\label{lem:Em}
We have
\begin{align}
    \frac{d}{ds} E_{m1}(s)
    + \frac{1}{2} E_{m1}(s) + m_s(s)^2
    &=
    \left(
    \frac{3r^2 e^{-s}}{4a} - \frac{r a'}{2a^2} 
    \right) m_s(s)^2
\end{align}
and
\begin{align}
    \frac{d}{ds} E_{m2}(s)
    &=
    2 E_{m1}(s) +
    \left( \frac{r'}{a} - \frac{ra'}{a^2}
    \right) m(s) m_s(s).
\end{align}
\end{lemma}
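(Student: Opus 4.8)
The plan is to treat this exactly as the other energy identities in this section, except that we now work with the scalar ODE for $m(s)$ in Lemma \ref{lem:eq:m} instead of a PDE system, so no integration by parts is needed. The single algebraic input is Lemma \ref{lem:eq:m}, which I would rewrite in the form
\[
	\frac{r^2 e^{-s}}{a} m_{ss} = \frac{r^2 e^{-s}}{a} m_s - \left( 1 + \frac{r'}{a} \right) m_s,
\]
allowing $m_{ss}$ to be eliminated wherever it appears. The other ingredient is the formula \eqref{eq:c1prime} for the derivative of $c_1(s) := r^2 e^{-s}/a$, namely $c_1'(s) = \tfrac{2r'}{a} - \tfrac{r^2 e^{-s}}{a} - \tfrac{ra'}{a^2}$, which was already obtained using $\tfrac{d}{ds} r(t(s)) = \tfrac{r'}{r} e^s$ and $\tfrac{d}{ds} a(t(s)) = \tfrac{a'}{r} e^s$; note $E_{m1} = \tfrac12 c_1 m_s^2$ and $E_{m2} = \tfrac12 m^2 + c_1 m m_s$.

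For the first identity I would differentiate $E_{m1}(s) = \tfrac12 c_1(s) m_s(s)^2$ to get $\tfrac{d}{ds}E_{m1} = \tfrac12 c_1' m_s^2 + c_1 m_s m_{ss}$, then substitute $c_1 m_{ss} = c_1 m_s - (1 + r'/a) m_s$ to obtain $\tfrac{d}{ds}E_{m1} = \bigl( \tfrac12 c_1' + c_1 - 1 - \tfrac{r'}{a} \bigr) m_s^2$. Inserting the expression for $c_1'$ collapses the coefficient to $\bigl( \tfrac{r^2 e^{-s}}{2a} - \tfrac{ra'}{2a^2} - 1 \bigr) m_s^2$; adding $\tfrac12 E_{m1} = \tfrac{r^2 e^{-s}}{4a} m_s^2$ and $m_s^2$ cancels the constant $-1$ and produces the claimed right-hand side. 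For the second identity I would differentiate $E_{m2}(s) = \tfrac12 m^2 + c_1 m m_s$ to get $\tfrac{d}{ds}E_{m2} = m m_s + c_1' m m_s + c_1 m_s^2 + c_1 m m_{ss}$, eliminate $m_{ss}$ the same way to reach $c_1 m_s^2 + \bigl( c_1' + c_1 - \tfrac{r'}{a} \bigr) m m_s$, and then use $c_1' + c_1 = \tfrac{2r'}{a} - \tfrac{ra'}{a^2}$ (from \eqref{eq:c1prime}) together with $c_1 m_s^2 = 2 E_{m1}$ to recognize the result as $2 E_{m1} + \bigl( \tfrac{r'}{a} - \tfrac{ra'}{a^2} \bigr) m m_s$.

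Every step here is an elementary manipulation, so I do not expect any genuine obstacle; the only thing requiring care is consistent bookkeeping of the terms $r^2 e^{-s}/a$, $r'/a$ and $ra'/a^2$ and, in particular, using the chain-rule identities for $\tfrac{d}{ds} a(t(s))$ and $\tfrac{d}{ds} r(t(s))$ recorded just after Lemma \ref{lem:E0} when differentiating $c_1(s)$. (Since $m(s)$ satisfies a linear second-order ODE with no source term, there are also no nonlinear or remainder contributions to track, in contrast with Lemmas \ref{lem:E0}--\ref{lem:E2}.)
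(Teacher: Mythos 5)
Your proposal is correct and follows essentially the same route as the paper: differentiate $E_{m1}$ and $E_{m2}$, eliminate $m_{ss}$ via Lemma \ref{lem:eq:m}, and insert the formula \eqref{eq:c1prime} for $\frac{d}{ds}\bigl(r^2e^{-s}/a\bigr)$; all the coefficient bookkeeping you describe checks out. The only difference is cosmetic — you keep the abbreviation $c_1$ a bit longer before substituting.
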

\begin{proof}
By \eqref{eq:c1prime} and Lemma \ref{lem:eq:m},
we have
\begin{align}
    \frac{d}{ds} E_{m1}(s)
    &=
    \frac{1}{2} \frac{d}{ds} \left( \frac{r^2e^{-s}}{a} \right) m_s^2
    + \frac{r^2 e^{-s}}{a} m_s m_{ss} \\
    &=
    \frac{1}{2} \left( \frac{2r'}{a} - \frac{r^2e^{-s}}{a} - \frac{ra'}{a^2} \right) m_s^2 \\
    &\quad
    + \frac{r^2e^{-s}}{a} m_s^2
    - \left( 1 + \frac{r'}{a} \right) m_s^2 \\
    &=
    \frac{1}{2} \frac{r^2e^{-s}}{a} m_s^2
    - \frac{1}{2} \frac{ra'}{a^2} m_s^2
    - m_s^2 \\
    &=
    - \frac{1}{2} E_{m1}(s) - m_s^2 
    + \left( \frac{3r^2e^{-s}}{4a} - \frac{ra'}{2a^2} \right) m_s^2.
\end{align}
Similarly, we have
\begin{align}
    \frac{d}{ds} E_{m2}(s)
    &=
    m m_s
    + \frac{d}{ds} \left( \frac{r^2e^{-s}}{a} \right) m m_s
    + \frac{r^2e^{-s}}{a} m_s^2
    + \frac{r^2e^{-s}}{a} m m_{ss} \\
    &=
    m m_s
    + \left( \frac{2r'}{a} - \frac{r^2e^{-s}}{a} - \frac{ra'}{a^2} \right) m m_s \\
    &\quad
    + \frac{r^2e^{-s}}{a} m_s^2
    + \frac{r^2e^{-s}}{a} m m_{s}
    - \left( 1 + \frac{r'}{a} \right) m m_s \\
    &=
    \frac{r^2e^{-s}}{a} m_s^2
    + \left( \frac{r'}{a} - \frac{ra'}{a^2} \right) m m_s \\
    &=
    2 E_{m1}(s)
    + \left( \frac{r'}{a} - \frac{ra'}{a^2} \right) m m_s.
\end{align}
\end{proof}

\subsection{Energy estimates}
Now, we combine the energy identities
in the previous subsection to obtain
the energy estimates.
First, we prepare the following estimates
for remainders.
\begin{lemma}\label{lem:remainder}
Set
\begin{align}
    \delta
    &:=
    \min \left\{ \frac{\beta + 1}{\alpha-\beta + 1},
    \frac{2\alpha-\beta + 1}{\alpha-\beta+1} \right\},
\end{align}
which is positive if
$(\alpha,\beta) \in \Omega_1$.
Then, we have
\begin{align}
    \frac{r^2 e^{-s}}{a}
    \sim 
        e^{-\frac{\beta+1}{\alpha-\beta+1}s}
    \le
    C e^{-\delta s},
    \quad
    \frac{e^{-s}}{a}
    \sim
        e^{-\frac{2\alpha-\beta+1}{\alpha-\beta+1}s}
    \le
    C e^{-\delta s},
\end{align}
and
\begin{align} 
    \left| \frac{a'}{ra^2} \right|
    \le 
    C \frac{e^{-s}}{a},
    \quad
    \left| \frac{ra'}{a^2} \right|
    \le
    C \frac{r^2e^{-s}}{a},
    \quad
    \left| \frac{r'}{a} \right|
    \le C \frac{r^2e^{-s}}{a}.
\end{align}
\end{lemma}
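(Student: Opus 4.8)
The plan is to reduce every estimate to the growth bounds on $a$, $b$, $r$, $R$ supplied by Assumption (A), and then simply substitute the scaling relation $e^{s} = R(t)+1$. First I would record the elementary consequences of (A): since $a(t)\sim(1+t)^{\alpha}$, $b(t)\sim(1+t)^{\beta}$, we have $r(t)=a(t)/b(t)\sim(1+t)^{\alpha-\beta}$, and by integrating, $R(t)+1\sim(1+t)^{\alpha-\beta+1}$ (this is the displayed estimate just before the Theorem, valid because $\alpha-\beta+1>0$ on $\Omega_1$; note $\beta<\alpha+1$ is one of the defining inequalities). Inverting, $1+t\sim(R(t)+1)^{1/(\alpha-\beta+1)}=e^{s/(\alpha-\beta+1)}$. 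From here the two ``$\sim$'' statements are direct computations:
\[
\frac{r^{2}e^{-s}}{a}=\frac{r(t)^{2}}{a(t)\,(R(t)+1)}
\sim\frac{(1+t)^{2(\alpha-\beta)}}{(1+t)^{\alpha}(1+t)^{\alpha-\beta+1}}
=(1+t)^{-(\beta+1)}\sim e^{-\frac{\beta+1}{\alpha-\beta+1}s},
\]
and similarly $\dfrac{e^{-s}}{a}=\dfrac{1}{a(t)(R(t)+1)}\sim(1+t)^{-(2\alpha-\beta+1)}\sim e^{-\frac{2\alpha-\beta+1}{\alpha-\beta+1}s}$. The inequalities $\le Ce^{-\delta s}$ then hold by the very definition of $\delta=\min\{(\beta+1)/(\alpha-\beta+1),\,(2\alpha-\beta+1)/(\alpha-\beta+1)\}$, and $\delta>0$ precisely because $(\alpha,\beta)\in\Omega_{1}$ forces $\beta+1>0$ and $2\alpha-\beta+1>0$ (the latter from $\beta<2\alpha+1$).

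For the second group of estimates I would use the derivative bounds $|a'(t)|\le C(1+t)^{\alpha-1}$, $|b'(t)|\le C(1+t)^{\beta-1}$ from (A). These give $\left|\dfrac{a'(t)}{a(t)}\right|\le C(1+t)^{-1}$ and, via $r'=(a'b-ab')/b^{2}$, the bound $|r'(t)|\le C(1+t)^{\alpha-\beta-1}$, i.e. $\left|\dfrac{r'(t)}{r(t)}\right|\le C(1+t)^{-1}$. Consequently
\[
\left|\frac{a'}{ra^{2}}\right|
=\frac{1}{a}\cdot\frac{|a'|}{a}\cdot\frac{1}{r}
\le \frac{C}{a(t)}\cdot\frac{1}{(1+t)}\cdot\frac{1}{r(t)}
\le \frac{C}{a(t)}\cdot\frac{1}{R(t)+1}
= C\,\frac{e^{-s}}{a},
\]
where in the middle step I used $r(t)(1+t)\sim(1+t)^{\alpha-\beta+1}\sim R(t)+1$. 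The same substitution handles the other two: $\left|\dfrac{ra'}{a^{2}}\right|=\dfrac{r}{a}\cdot\dfrac{|a'|}{a}\le C\dfrac{r(t)}{a(t)(1+t)}\le C\dfrac{r(t)^{2}}{a(t)(R(t)+1)}=C\dfrac{r^{2}e^{-s}}{a}$ (using $1+t\ge c\,r(t)^{-1}(R(t)+1)$, equivalently $r(t)(1+t)\sim R(t)+1$), and $\left|\dfrac{r'}{a}\right|=\dfrac{1}{a}\cdot\dfrac{|r'|}{r}\cdot r\le C\dfrac{r(t)}{a(t)(1+t)}\le C\dfrac{r^{2}e^{-s}}{a}$ by the identical comparison.

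I do not expect a genuine obstacle here: the lemma is a bookkeeping statement translating the power-type hypotheses of (A) into the scaling variable $s$. The only point requiring a little care is to keep track of which defining inequality of $\Omega_{1}$ is used where — $\beta>-1$ gives the first exponent positive, $\beta<2\alpha+1$ gives the second positive, and $\beta<\alpha+1$ guarantees $\alpha-\beta+1>0$ so that $R(t)+1\to\infty$ and $s$ is well-defined and increasing — and to consistently exploit the single relation $r(t)(1+t)\sim(1+t)^{\alpha-\beta+1}\sim R(t)+1$, which is what converts each factor $(1+t)^{-1}$ coming from a derivative bound into a factor $(R(t)+1)^{-1}=e^{-s}$ up to the remaining power of $a$ or $r$.
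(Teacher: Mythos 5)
Your proof is correct and is exactly the elementary bookkeeping argument the authors intend (the paper states this lemma without proof, as an immediate consequence of Assumption (A) and the relation $e^{s}=R(t)+1\sim(1+t)^{\alpha-\beta+1}$). All exponent computations check out, and you correctly identify which defining inequality of $\Omega_{1}$ guarantees positivity of each exponent and the key comparison $r(t)(1+t)\sim R(t)+1$ that converts the derivative bounds into the stated estimates.
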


Define
\begin{align}
    \mathbb{E}_0(s)
    &:=
    E_{01}(s) + c_0 E_{02}(s),
\end{align}
where
$c_0 > 0$
is a sufficiently large constant determined later.

\begin{lemma}\label{lem:EE0}
There exists a constant
$c_0 > 0$
satisfying the following:
For any
$\eta > 0$,
there exists $s_0 > 0$ such that
for any $s \ge s_0$, we have
\begin{align}
	\mathbb{E}_0(s)
	\ge
	C \left( \int_{\mathbb{R}} F_y^2 \,dy 
	+ \frac{e^{-s}}{2a}
    \int_{\mathbb{R}} F_{yy}^2 \,dy 
    + \frac{r^2e^{-s}}{2a}
    \int_{\mathbb{R}} G^2 \,dy
    + \int_{\mathbb{R}} F^2 \,dy \right)
\end{align}
and
\begin{align}
    &\frac{d}{ds} \mathbb{E}_0(s)
    +\frac{1}{2} \mathbb{E}_0(s)
    + \frac{1}{4} \int_{\mathbb{R}} G^2 \,dy \\
    &\le
    \eta \mathbb{E}_0(s)
    + C(\eta) \left(
    \| H(s) \|_{L^2}^2
    + \frac{e^{2s}}{a^2}
    \| N(e^{-s} v_y) \|_{L^2}^2
    \right).
\end{align}
\end{lemma}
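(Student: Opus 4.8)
The plan is to combine the two identities of Lemma~\ref{lem:E0}, using a large weight $c_0$ on the identity for $E_{02}$. Adding $c_0$ times the second identity to the first, rearranging, and then adding $\tfrac12\mathbb{E}_0(s)+\tfrac14\int_{\mathbb{R}}G^2\,dy$ to both sides, a direct computation should give
\begin{align*}
	\frac{d}{ds}\mathbb{E}_0(s)+\frac12\mathbb{E}_0(s)+\frac14\int_{\mathbb{R}}G^2\,dy
	=-(2c_0-1)E_{01}(s)-\frac34\int_{\mathbb{R}}G^2\,dy+\mathcal{R}(s),
\end{align*}
where $\mathcal{R}(s)$ collects the terms carrying the coefficients $\tfrac{a'}{ra^2}$, $\tfrac{ra'}{a^2}$, $\tfrac{r^2e^{-s}}{a}$, $\tfrac{r'}{a}$ (these come from $c_1'$, $c_4'$ and the lower-order damping in Lemma~\ref{lem:E0}), together with the nonlinear contributions $\int_{\mathbb{R}}G\tfrac{e^s}{a}N(e^{-s}v_y)\,dy$, $c_0\int_{\mathbb{R}}F\tfrac{e^s}{a}N(e^{-s}v_y)\,dy$ and the forcing terms $\int_{\mathbb{R}}GH\,dy$, $c_0\int_{\mathbb{R}}FH\,dy$. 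Everything then reduces to absorbing $\mathcal{R}(s)$ into the two negative terms on the right, plus the allowed $\eta\mathbb{E}_0(s)$ and the claimed forcing bound.

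For the coercivity estimate I would apply Young's inequality to the cross term in $E_{02}$, namely $\tfrac{r^2e^{-s}}{a}\bigl|\int_{\mathbb{R}}FG\,dy\bigr|\le\tfrac14\int_{\mathbb{R}}F^2\,dy+\tfrac{r^4e^{-2s}}{a^2}\int_{\mathbb{R}}G^2\,dy$, and then use Lemma~\ref{lem:remainder} in the form $\tfrac{r^4e^{-2s}}{a^2}\le Ce^{-\delta s}\cdot\tfrac{r^2e^{-s}}{a}$. Hence, for $s$ beyond a threshold depending only on $c_0$, one gets $c_0E_{02}(s)\ge\tfrac{c_0}{4}\int_{\mathbb{R}}F^2\,dy-\tfrac14\cdot\tfrac{r^2e^{-s}}{a}\int_{\mathbb{R}}G^2\,dy$, and the negative piece is dominated by the $G$-term in $E_{01}$; since $E_{01}\ge0$ controls the first three quantities, $\mathbb{E}_0(s)$ controls $\int F_y^2+\tfrac{e^{-s}}{a}\int F_{yy}^2+\tfrac{r^2e^{-s}}{a}\int G^2+\int F^2$ as claimed. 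In particular this yields $\int_{\mathbb{R}}F^2\,dy\le C\mathbb{E}_0(s)$, which I will use repeatedly.

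For the differential inequality the terms of $\mathcal{R}(s)$ fall into three types. First, $-\tfrac{a'}{2ra^2}\int_{\mathbb{R}}F_{yy}^2\,dy$ is bounded via Lemma~\ref{lem:remainder} by $C\cdot\tfrac{e^{-s}}{2a}\int_{\mathbb{R}}F_{yy}^2\,dy\le CE_{01}(s)$ with a \emph{fixed} constant $C$; this is the only non-decaying term and it is absorbed by choosing $c_0$ so large (independently of $\eta$) that $2c_0-1>C$. Second, the terms carrying $\tfrac{ra'}{a^2}$, $\tfrac{r^2e^{-s}}{a}$, $\tfrac{r'}{a}$ are, by Lemma~\ref{lem:remainder}, bounded by $Ce^{-\delta s}$ times $\int G^2$, $\int F^2$ (after Young on the $\int FG$ cross terms) or $\mathbb{E}_0$; using $\int F^2\le C\mathbb{E}_0$ and choosing $s_0$ large (depending on $c_0$ and $\eta$) makes them $\le\tfrac14\int_{\mathbb{R}}G^2\,dy+\eta\mathbb{E}_0(s)$. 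Third, the nonlinear and forcing terms are handled by Young's inequality: $|\int_{\mathbb{R}}G\tfrac{e^s}{a}N\,dy|\le\tfrac18\int_{\mathbb{R}}G^2\,dy+C\tfrac{e^{2s}}{a^2}\|N(e^{-s}v_y)\|_{L^2}^2$, $c_0|\int_{\mathbb{R}}F\tfrac{e^s}{a}N\,dy|\le\epsilon\int_{\mathbb{R}}F^2\,dy+C_\epsilon\tfrac{e^{2s}}{a^2}\|N(e^{-s}v_y)\|_{L^2}^2$, and likewise for $H$; taking $\epsilon$ small (depending on $\eta$, through $\int F^2\le C\mathbb{E}_0$) turns the $\int F^2$ contributions into $\le\eta\mathbb{E}_0(s)$ while producing the constant $C(\eta)$ in front of $\|H\|_{L^2}^2+\tfrac{e^{2s}}{a^2}\|N(e^{-s}v_y)\|_{L^2}^2$. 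Summing, the $E_{01}$-losses stay below $2c_0-1$, the $\int G^2$-losses stay below $\tfrac34$, and the $\mathbb{E}_0$-losses can be kept below $\eta$; moving $\eta\mathbb{E}_0+C(\eta)(\cdots)$ to the right completes the proof.

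The main obstacle, and the reason for the two-stage choice of constants, is the first type above: the damping-like term $-\tfrac{a'}{2ra^2}\int_{\mathbb{R}}F_{yy}^2\,dy$ carries no smallness in $s$, only a fixed size comparable to $E_{01}$, so it cannot be absorbed into the dissipation of the $E_{01}$-identity alone — one must borrow a definite amount of $E_{01}$-dissipation from the $c_0$-multiplied $E_{02}$-identity, which is why $c_0$ must be fixed before $\eta$ and $s_0$. The remaining work — tracking which coefficients are $O(e^{-\delta s})$ versus $O(1)$ and checking the bookkeeping of the absorbed quantities — is routine once the orders of magnitude from Lemma~\ref{lem:remainder} are in hand.
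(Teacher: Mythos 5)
Your proposal is correct and follows essentially the same route as the paper: combine the two identities of Lemma \ref{lem:E0} with a large weight $c_0$ on the $E_{02}$-identity, prove coercivity by Young's inequality on the cross term $\frac{r^2e^{-s}}{a}\int FG\,dy$ together with Lemma \ref{lem:remainder}, fix $c_0$ first (independently of $\eta$) to absorb the non-decaying $O(E_{01})$ losses coming from $\frac{a'}{ra^2}$, and then choose $s_0$ and the Young parameters to reduce the remaining losses to $\frac14\int G^2\,dy+\eta\,\mathbb{E}_0(s)+C(\eta)(\cdots)$ via $\int F^2\,dy\le C\mathbb{E}_0(s)$. The quantifier ordering ($c_0$ before $\eta$ before $s_0$) and the budget bookkeeping match the paper's argument.
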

\begin{proof}
Let
$\eta > 0$ be arbitrary.
Lemmas \ref{lem:E0} and \ref{lem:remainder},
and the Schwarz inequality imply
\begin{align}
    \frac{d}{ds}E_{01}(s) + \int_{\mathbb{R}} G^2 \,dy
    &\le
    \frac{1}{2} E_{01}(s)
    + C_1 \frac{e^{-s}}{a} \int_{\mathbb{R}} F_{yy}^2 \,dy
    + C_1 \frac{r^2e^{-s}}{a} \int_{\mathbb{R}} G^2 \,dy \\
    &\quad
    + \frac{1}{2} \int_{\mathbb{R}} G^2 \,dy
    + C \left(
    \| H(s) \|_{L^2}^2
    + \frac{e^{2s}}{a^2}
    \| N(e^{-s} v_y) \|_{L^2}^2
    \right) \\
    &\le
    \left( \frac{1}{2} + 2C_1 \right) E_{01}(s)
    + \frac{1}{2} \int_{\mathbb{R}} G^2 \,dy \\
    &\quad
    + C \left(
    \| H(s) \|_{L^2}^2
    + \frac{e^{2s}}{a^2}
    \| N(e^{-s} v_y) \|_{L^2}^2
    \right)
\end{align}
with some $C_1 > 0$ and
\begin{align}
    \frac{d}{ds} E_{02}(s) + \frac{1}{2} E_{02}(s)
    + 2E_{01}(s)
    &\le
    C_2(\eta_1) e^{-\delta s} \int_{\mathbb{R}} G^2 \,dy
    + \eta_1 \int_{\mathbb{R}} F^2 \,dy \\
    &\quad
    + C(\eta_1)  \left(
    \| H(s) \|_{L^2}^2
    + \frac{e^{2s}}{a^2}
    \| N(e^{-s} v_y) \|_{L^2}^2
    \right),
\end{align}
with some $C_2(\eta_1) > 0$,
where
$\eta_1$
is an arbitrary small positive number determined later.
We take
$c_0$
sufficiently large so that
$2c_0 - \frac{1}{2} -2C_1 \ge \frac{1}{2}$.
Then, letting
$s_0$ sufficiently large so that
$c_0 C_2(\eta_1) e^{-\delta s} \le \frac{1}{4}$
holds for any $s \ge s_0$,
we conclude
\begin{align}
    &\frac{d}{ds} \mathbb{E}_0(s)
    +\frac{1}{2} \mathbb{E}_0(s)
    + \frac{1}{4} \int_{\mathbb{R}} G^2 \,dy \\
\label{eq:lem:E0}
    &\le
    2c_0 \eta_1 \int_{\mathbb{R}} F^2 \,dy
    + C \left(
    \| H(s) \|_{L^2}^2
    + \frac{e^{2s}}{a^2}
    \| N(e^{-s} v_y) \|_{L^2}^2
    \right).
\end{align}
On the other hand, we remark that
\begin{align}
	 \frac{r^2e^{-s}}{a} \left| \int_{\mathbb{R}} F(s,y) G(s,y) \,dy \right|
	 &\le
	 \frac{1}{4} \int_{\mathbb{R}} F(s,y)^2 \,dy
	 + C \left( \frac{r^2e^{-s}}{a} \right)^2  \int_{\mathbb{R}} G(s,y)^2 \,dy \\
	 &\le 
	 \frac{1}{4} \int_{\mathbb{R}} F(s,y)^2 \,dy
	 + C e^{-\delta s} \frac{r^2e^{-s}}{2a}
    \int_{\mathbb{R}} G(s,y)^2 \,dy.
\end{align}
From this, retaking $s_0$ larger if needed, we have for $s \ge s_0$,
\begin{align}
	\mathbb{E}_0(s)
	\ge
	C \left( \int_{\mathbb{R}} F_y^2 \,dy 
	+ \frac{e^{-s}}{2a}
    \int_{\mathbb{R}} F_{yy}^2 \,dy 
    + \frac{r^2e^{-s}}{2a}
    \int_{\mathbb{R}} G^2 \,dy
    + \int_{\mathbb{R}} F^2 \,dy \right),
\end{align}
which shows the first assertion.
In particular, it gives
$\int_{\mathbb{R}} F_y^2 \,dy \le C \mathbb{E}_0(s)$
Applying this to the right-hand side of \eqref{eq:lem:E0}
and
taking $\eta_1$ so that $\eta  = 2 c_0 C \eta_1 $, 
we have the desired estimate.
\end{proof}

Next, for $n=0, 1$,
we define
\begin{align}
    \mathbb{E}^{(n)}_{1}(s)
    :=
    E^{(n)}_{11}(s) + c_1^{(n)} E^{(n)}_{12}(s),
\end{align}
where
$c_1^{(0)}$ and $c_1^{(1)}$
are sufficiently large constants
determined later.
The following two lemmas are
the estimates for
$\mathbb{E}_{1}^{(0)}(s)$
and
$\mathbb{E}_{1}^{(1)}(s)$,
respectively.

\begin{lemma}\label{lem:EE1:0}
There exist positive constants
$c_1^{(0)}$ 
and $s_1^{(0)}$
such that for any
$s \ge s_1^{(0)}$,
we have
\begin{align}
	\mathbb{E}^{(0)}_1(s)
	\ge
	C \left( \int_{\mathbb{R}} f_y^2 \,dy 
	+ \frac{e^{-s}}{2a}
    \int_{\mathbb{R}} f_{yy}^2 \,dy 
    + \frac{r^2e^{-s}}{2a}
    \int_{\mathbb{R}} g^2 \,dy
    + \int_{\mathbb{R}} f^2 \,dy \right)
\end{align}
and
\begin{align}
    &\frac{d}{ds} \mathbb{E}^{(0)}_1 (s)
    + \frac{1}{2} \mathbb{E}^{(0)}_1(s)
    + \frac{1}{4} \int_{\mathbb{R}} g^2 \,dy \\
    &\le
    C \mathbb{E}_0(s)
    + C \left(
    \| h(s) \|_{L^2}^2
    + \frac{e^{2s}}{a^2}
    \left\| \partial_y N \left(
        e^{-s} v_y
            \right) 
    \right\|_{L^2}^2
    \right).
\end{align}
\end{lemma}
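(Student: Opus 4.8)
The plan is to repeat the scheme used for $\mathbb{E}_0$ in Lemma \ref{lem:EE0}, now with the energy identities of Lemma \ref{lem:E1} for $n=0$, for which all boundary integrals carrying $y^{2n-1}$ and $y^{2n-2}$ vanish. Setting $\mathbb{E}_1^{(0)} = E_{11}^{(0)} + c_1^{(0)} E_{12}^{(0)}$ and using $\tfrac{d}{ds}E_{11}^{(0)} = -\int_{\mathbb{R}} g^2\,dy + \tfrac32 E_{11}^{(0)} + (\text{l.o.t.})$ and $\tfrac{d}{ds}E_{12}^{(0)} = \tfrac12 E_{12}^{(0)} - 2E_{11}^{(0)} + (\text{l.o.t.})$, one finds, after adding the two identities and moving $\tfrac12 \mathbb{E}_1^{(0)} + \tfrac14 \int_{\mathbb{R}} g^2\,dy$ to the left, that the principal part of the right-hand side is $-\tfrac34 \int_{\mathbb{R}} g^2\,dy + (2 - 2c_1^{(0)}) E_{11}^{(0)} + c_1^{(0)} E_{12}^{(0)}$, the rest consisting of remainder terms with coefficients $\tfrac{a'}{ra^2}$, $\tfrac{ra'}{a^2}$, $\tfrac{r'}{a}$, $\tfrac{r^2 e^{-s}}{a}$, $\tfrac{e^{-s}}{a}$, together with the nonlinear and $h$ contributions $\int_{\mathbb{R}} g(\tfrac{e^s}{a}\partial_y N(e^{-s}v_y) + h)\,dy$ and $c_1^{(0)} \int_{\mathbb{R}} f(\tfrac{e^s}{a}\partial_y N(e^{-s}v_y) + h)\,dy$.

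Next I would estimate each of these. By Lemma \ref{lem:remainder} the listed coefficients are all $O(e^{-\delta s})$ and, in addition, $\tfrac{e^{-s}}{a}\int_{\mathbb{R}} f_{yy}^2\,dy \le 2E_{11}^{(0)}$ and $\tfrac{r^2 e^{-s}}{a}\int_{\mathbb{R}} g^2\,dy \le 2E_{11}^{(0)}$; hence $-\tfrac{a'}{2ra^2}\int_{\mathbb{R}} f_{yy}^2\,dy$ is bounded by $C_4 E_{11}^{(0)}$ with a universal $C_4$, while the $g^2$-remainders (coefficients $\tfrac{ra'}{a^2}$ and $\tfrac{r^2e^{-s}}{a}$, the latter with the factor $c_1^{(0)}$), being of size $C c_1^{(0)} e^{-\delta s}\int_{\mathbb{R}} g^2\,dy$, are absorbed into the available $-\tfrac34\int_{\mathbb{R}} g^2\,dy$ once $s$ is large. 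The cross term $c_1^{(0)}(\tfrac{r'}{a} - \tfrac{ra'}{a^2})\int_{\mathbb{R}} fg\,dy$ I would split by Young's inequality into a $g^2$-piece (again absorbed) and a piece $C c_1^{(0)} e^{-\delta s}\int_{\mathbb{R}} f^2\,dy$; the unfavorable $c_1^{(0)} E_{12}^{(0)}$ I would bound, using Young's inequality on $\tfrac{r^2e^{-s}}{a}\int fg$, by $\tfrac{5c_1^{(0)}}{8}\int_{\mathbb{R}} f^2\,dy + C c_1^{(0)} e^{-\delta s} E_{11}^{(0)}$; and the nonlinear and $h$ terms by Young's inequality, producing $\tfrac18\int_{\mathbb{R}} g^2\,dy + C\|h\|_{L^2}^2 + C\tfrac{e^{2s}}{a^2}\|\partial_y N(e^{-s}v_y)\|_{L^2}^2$ and $c_1^{(0)}\int_{\mathbb{R}} f^2\,dy + C\|h\|_{L^2}^2 + C\tfrac{e^{2s}}{a^2}\|\partial_y N(e^{-s}v_y)\|_{L^2}^2$ respectively.

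The key point is the disposal of the residual multiples of $\int_{\mathbb{R}} f^2\,dy$: since $F_y = f$ by the definition of $F$, the already-established coercivity of $\mathbb{E}_0$ (Lemma \ref{lem:EE0}) gives $\int_{\mathbb{R}} f^2\,dy = \int_{\mathbb{R}} F_y^2\,dy \le C\mathbb{E}_0(s)$, so every such term is routed to the $C\mathbb{E}_0(s)$ on the right-hand side — this is precisely why the lower-order energy has to appear there. With this in hand, I would first fix $c_1^{(0)}$ so large that $2 - 2c_1^{(0)} + C_4 \le 0$, so that, $E_{11}^{(0)}$ being a sum of nonnegative quadratics, that term may simply be dropped; then I would choose $s_1^{(0)}$ so large that every factor $e^{-\delta s}$ multiplied by the now-fixed $c_1^{(0)}$ falls below the thresholds used in the absorptions above. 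This yields the claimed differential inequality. The coercivity assertion follows from $\tfrac{r^2e^{-s}}{a}\,|\!\int fg| \le \tfrac14\int_{\mathbb{R}} f^2\,dy + C e^{-\delta s} E_{11}^{(0)}$, whence $\mathbb{E}_1^{(0)} \ge \tfrac12 E_{11}^{(0)} + \tfrac{c_1^{(0)}}{4}\int_{\mathbb{R}} f^2\,dy$ for $s \ge s_1^{(0)}$, which is the stated lower bound.

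The main obstacle is that, unlike for the $F$-energy (where $\tfrac{d}{ds}E_{01} = \tfrac12 E_{01} + \cdots$ and $\mathbb{E}_0$ already absorbs its own growth), here the $n=0$ self-similar growth rate $\tfrac12 E_{12}^{(0)}$ together with the sign-indefiniteness of $E_{12}^{(0)}$ prevents the surplus $c_1^{(0)} E_{12}^{(0)}$ from being swallowed by $-\tfrac12\mathbb{E}_1^{(0)}$; it must instead be dominated, through its $\int f^2$ part, by the previously controlled energy $\mathbb{E}_0$, so the only real subtlety is the order of the quantifiers — first fixing $c_1^{(0)}$ from universal constants, then $s_1^{(0)}$ from $c_1^{(0)}$.
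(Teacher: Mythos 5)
Your proposal is correct and follows essentially the same route as the paper: apply Lemma \ref{lem:E1} with $n=0$, absorb the $O(e^{-\delta s})$ remainders via Lemma \ref{lem:remainder} and the Schwarz inequality, fix $c_1^{(0)}$ first from universal constants so the $E_{11}^{(0)}$ surplus has a nonpositive coefficient, then take $s_1^{(0)}$ large, and finally route the leftover $\int_{\mathbb{R}} f^2\,dy$ through $f = F_y$ into $C\,\mathbb{E}_0(s)$. The only cosmetic difference is that the paper keeps the $E_{11}^{(0)}$ and $E_{12}^{(0)}$ inequalities separate before combining, whereas you combine first and then estimate; the substance, including the quantifier order and the coercivity argument, is identical.
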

\begin{proof}
By Lemmas \ref{lem:E1} and \ref{lem:remainder},
and the Schwarz inequality,
we have
\begin{align}
    &\frac{d}{ds} E_{11}^{(0)}(s)
    + \frac{1}{2} E_{11}^{(0)}(s)
    + \int_{\mathbb{R}} g^2\,dy \\
    &\le
    2E_{11}^{(0)}(s)
    + C \frac{e^{-s}}{a} \int_{\mathbb{R}} f_{yy}^2 \,dy
    + C \frac{r^2e^{-s}}{a} \int_{\mathbb{R}} g^2 \,dy \\
    &\quad
    + \frac{1}{2} \int_{\mathbb{R}} g^2 \,dy
    + C \left(
        \| h(s) \|_{L^2}^2
        + \frac{e^{2s}}{a^2}
        \| \partial_y \left( N \left( e^{-s} v_y \right) \right) \|_{L^2}^2
    \right),
\end{align}
which implies
\begin{align}
    &\frac{d}{ds} E_{11}^{(0)}(s)
    + \frac{1}{2} E_{11}^{(0)}(s)
    + \frac{1}{2}\int_{\mathbb{R}} g^2\,dy \\
    &\le
    \left( 2 + C_1 \right) E_{11}^{(0)}(s)
    + C \left(
        \| h(s) \|_{L^2}^2
        + \frac{e^{2s}}{a^2}
        \| \partial_y \left( N \left( e^{-s} v_y \right) \right) \|_{L^2}^2
    \right) 
\end{align}
with some constant
$C_1>0$.
In a similar way, we also obtain
\begin{align}
    &\frac{d}{ds} E_{12}^{(0)} (s)
    + \frac{1}{2} E_{12}^{(0)} (s)
    + 2 E_{11}^{(0)} (s) \\
    &\le
    C \int_{\mathbb{R}} f^2 \,dy
    + C_2 e^{-\delta s} \int_{\mathbb{R}} g^2 \,dy \\
    &\quad
    + C \left(
        \| h(s) \|_{L^2}^2
        + \frac{e^{2s}}{a^2}
        \| \partial_y \left( N \left( e^{-s} v_y \right)  \right) \|_{L^2}^2
    \right) 
\end{align}
with some constant
$C_2 > 0$.
Therefore, taking
$c_1^{(0)}$ and $s_1^{(0)}$
sufficiently large so that
$2 c_1^{(0)} - \left( 2 + C_1 \right) \ge \frac{1}{2}$
and
$c_1^{(0)} C_2 e^{-\delta s} \le \frac{1}{4}$
holds for any
$s \ge s_1^{(0)}$,
we conclude
\begin{align}
    &\frac{d}{ds} \mathbb{E}^{(0)}_1 (s)
    + \frac{1}{2} \mathbb{E}^{(0)}_1(s)
    + \frac{1}{4} \int_{\mathbb{R}} g^2 \,dy \\
    &\le
    C \int_{\mathbb{R}} f^2 \,dy
    + C \left(
    \| h(s) \|_{L^2}^2
    + \frac{e^{2s}}{a^2}
    \left\| \partial_y N \left(
        e^{-s} v_y
            \right) 
    \right\|_{L^2}^2
    \right).
\end{align}
Finally, by $\int_{\mathbb{R}} f^2 \,dy = \int_{\mathbb{R}} F_y^2 \,dy \le C \mathbb{E}_0$,
the proof of the second assertion is complete.
The first assertion is proved in the same way as the previous lemma and we omit the detail.
\end{proof}
\begin{lemma}\label{lem:EE1:1}
There exists a constant $c_1^{(1)} > 0$ satisfying the following:
for any $\eta' > 0$, there exists a constant $s_1^{(1)} > 0$ such that
for any $s \ge s_1^{(1)}$, we have
\begin{align}
	\mathbb{E}^{(1)}_1(s)
	\ge
	C \left( \int_{\mathbb{R}} y^2f_y^2 \,dy 
	+ \frac{e^{-s}}{2a}
    \int_{\mathbb{R}} y^2 f_{yy}^2 \,dy 
    + \frac{r^2e^{-s}}{2a}
    \int_{\mathbb{R}} y^2 g^2 \,dy
    + \int_{\mathbb{R}} y^2 f^2 \,dy \right)
\end{align}
and
\begin{align}
    &\frac{d}{ds} \mathbb{E}_{1}^{(1)} (s) + \frac{1}{2} \mathbb{E}_1^{(1)} (s)
        + \frac{1}{4} \int_{\mathbb{R}} y^2 g^2 \,dy \\
    &\le
    \eta' \mathbb{E}_1^{(1)}(s)
    + C \mathbb{E}_1^{(0)} (s) 
    - 4 \frac{e^{-s}}{a} \int_{\mathbb{R}} y f_{yy} g_y \,dy
   + C e^{-\delta s} \int_{\mathbb{R}} g^2 dy 
    \\
    &\quad
    + C(\eta')
    \left(
    \| y h(s) \|_{L^2}^2 + \frac{e^{2s}}{a^2} \| y \partial_y (N(e^{-s}v_y)) \|_{L^2}^2 
    \right).
\end{align}
\end{lemma}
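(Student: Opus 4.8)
The strategy is to mirror the proofs of Lemmas~\ref{lem:EE0} and~\ref{lem:EE1:0}, but now with the weight $y^2$ (i.e. $n=1$) and carrying the extra weighted-derivative terms that appear in Lemma~\ref{lem:E1}. First I would invoke Lemma~\ref{lem:E1} with $n=1$ to write out $\frac{d}{ds}E_{11}^{(1)}(s)$ and $\frac{d}{ds}E_{12}^{(1)}(s)$. The leading good terms are $-\int y^2 g^2\,dy$ in the first identity and $-2E_{11}^{(1)}$ together with $-\frac14\int y^2 f^2\,dy$-type dissipation in the second; the dangerous terms are: (i) the $\frac{3-2n}{2}E_{11}^{(1)}=\frac12 E_{11}^{(1)}$ and $(1-n)E_{12}^{(1)}=0$ growth terms (harmless: the $\frac12$ is absorbed because we add $\frac12\mathbb E_1^{(1)}$ to the left); (ii) the coefficient terms $\frac{a'}{ra^2}\int y^2 f_{yy}^2$, $\frac{ra'}{a^2}\int y^2 g^2$, $\frac{r'}{a}\int y^2 fg$, which by Lemma~\ref{lem:remainder} are bounded by $Ce^{-\delta s}$ times terms already controlled by $\mathbb E_1^{(1)}$ or $\int y^2 g^2$, hence absorbable for $s$ large; (iii) the genuinely new mixed terms coming from differentiating the weight, namely $-2\int y\, f_y g\,dy$, $-2\cdot1\cdot\frac{e^{-s}}{a}\int f_{yy}g\,dy$, $-4\frac{e^{-s}}{a}\int y f_{yy}g_y\,dy$ in $\frac{d}{ds}E_{11}^{(1)}$ and $-2\int y f f_y\,dy$, $-4\frac{e^{-s}}{a}\int y f_y f_{yy}\,dy$, $-2\frac{e^{-s}}{a}\int f f_{yy}\,dy$ in $\frac{d}{ds}E_{12}^{(1)}$; and (iv) the nonlinear/remainder source terms $\int y^2 g\big(\frac{e^s}{a}\partial_y N(e^{-s}v_y)+h\big)\,dy$ and its $f$-analogue.

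For the mixed terms of type (iii) I would split them by Young's inequality into a small multiple of a "top-order" quantity and a large multiple of a "lower-order" quantity that is already dominated by $\mathbb E_1^{(0)}$ (for the un-weighted or $e^{-s}/a$-weighted pieces) or by $\int y^2 g^2\,dy$, $\mathbb E_1^{(1)}$ itself. Concretely: $|{-2\int y f_y g\,dy}|\le \varepsilon\int y^2 g^2 + C_\varepsilon\int f_y^2\le \varepsilon\int y^2 g^2 + C\mathbb E_1^{(0)}$; $|{-2\frac{e^{-s}}{a}\int f_{yy}g\,dy}|\le \frac{e^{-s}}{a}\big(\int f_{yy}^2+\int g^2\big)\le C\mathbb E_1^{(0)}$ after using $e^{-s}/a\le Ce^{-\delta s}\le C$; $|{-2\int y f f_y\,dy}|\le \varepsilon\int y^2 f^2 + C\mathbb E_1^{(0)}$; and similarly $|{-4\frac{e^{-s}}{a}\int y f_y f_{yy}\,dy}|$ and $|{-2\frac{e^{-s}}{a}\int f f_{yy}\,dy}|$ are bounded by $C\mathbb E_1^{(0)}$ plus small $e^{-s}/a$-weighted pieces of $\mathbb E_1^{(1)}$. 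The single term that cannot be disposed of this way and must be carried unchanged to the right-hand side of the conclusion is $-4\frac{e^{-s}}{a}\int y f_{yy}g_y\,dy$, exactly as announced in the text ("to control the bad term $\ldots$ we consider the energies $E_{21},E_{22}$"); this is why it appears explicitly in the statement and is handled subsequently via Lemma~\ref{lem:E2}.

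Next I would treat the source terms by Schwarz: $\big|\int y^2 g\,\frac{e^s}{a}\partial_y N(e^{-s}v_y)\,dy\big|\le \varepsilon\int y^2 g^2 + C_\varepsilon\frac{e^{2s}}{a^2}\|y\,\partial_y N(e^{-s}v_y)\|_{L^2}^2$ and likewise for the $h$-term one gets $C_\varepsilon\|y h\|_{L^2}^2$; the $f$-weighted source terms $\int y^2 f(\cdots)\,dy$ are bounded by $\varepsilon\int y^2 f^2\,dy\le \varepsilon'\mathbb E_1^{(1)}$ plus the same $C_\varepsilon(\eta')$-multiples of $\|yh\|_{L^2}^2$ and $\frac{e^{2s}}{a^2}\|y\partial_y N(e^{-s}v_y)\|_{L^2}^2$. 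Then I would form $\mathbb E_1^{(1)}=E_{11}^{(1)}+c_1^{(1)}E_{12}^{(1)}$, add the two identities with weight $c_1^{(1)}$, choose $c_1^{(1)}$ large enough that $2c_1^{(1)}-(\text{the }E_{11}^{(1)}\text{ growth constant})\ge\frac12$ so that the $-2c_1^{(1)}E_{11}^{(1)}$ term dominates the positive $E_{11}^{(1)}$ contributions, and choose $s_1^{(1)}$ large so that all the $e^{-\delta s}$-type coefficients (from Lemma~\ref{lem:remainder} and from $c_1^{(1)}$ times the $e^{-\delta s}\int y^2 g^2$ term in $\frac{d}{ds}E_{12}^{(1)}$) are $\le$ the reserved small fractions $\tfrac14$ of the corresponding good terms; the surviving lower-order $f_y^2,f_{yy}^2,g^2$ (unweighted) pieces are absorbed into $C\mathbb E_1^{(0)}$ using the coercivity half of Lemma~\ref{lem:EE1:0}. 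The coercivity assertion $\mathbb E_1^{(1)}\ge C(\cdots)$ is proved exactly as in Lemma~\ref{lem:EE0}: bound the cross term $\frac{r^2e^{-s}}{a}|\int y^2 fg\,dy|\le \frac14\int y^2 f^2 + Ce^{-\delta s}\cdot\frac{r^2e^{-s}}{2a}\int y^2 g^2$ and take $s_1^{(1)}$ larger if needed. The main obstacle is purely bookkeeping: making sure that every mixed term produced by the weight $y^2$ is matched either against a piece already present in $\mathbb E_1^{(0)}$ (with its $e^{-s}/a$ smallness) or against a small fraction of $\int y^2 g^2\,dy$ / $\mathbb E_1^{(1)}$, with the sole exception of $-4\frac{e^{-s}}{a}\int y f_{yy}g_y\,dy$ which is deliberately left on the right-hand side for the $E_2$-level estimate to absorb.
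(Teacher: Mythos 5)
Your plan is correct and follows essentially the same route as the paper's proof: apply Lemma \ref{lem:E1} with $n=1$, absorb the coefficient terms via Lemma \ref{lem:remainder} and the Schwarz inequality, carry the single bad term $-4\frac{e^{-s}}{a}\int_{\mathbb{R}} y f_{yy}g_y\,dy$ unchanged, and choose $c_1^{(1)}$, $s_1^{(1)}$, $\eta_1'$ exactly as you describe. One small correction: the piece $\frac{e^{-s}}{a}\int_{\mathbb{R}} g^2\,dy$ arising from $-2\frac{e^{-s}}{a}\int_{\mathbb{R}} f_{yy}g\,dy$ is \emph{not} bounded by $C\mathbb{E}_1^{(0)}$ in general (that would need $r^{-2}$ bounded, which fails when $\alpha<\beta$, a case allowed in $\Omega_1$); it should instead be routed to the $Ce^{-\delta s}\int_{\mathbb{R}} g^2\,dy$ term that appears explicitly in the conclusion, which is what the paper does.
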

\begin{proof}
Let $\eta' > 0$ be arbitrary.
By Lemmas \ref{lem:E1} and \ref{lem:remainder},
and the Schwarz inequality, we have
\begin{align}
    &\frac{d}{ds} E_{11}^{(1)}(s)
    + \frac{1}{2} E_{11}^{(1)}(s) 
    +  \int_{\mathbb{R}} y^2 g^2 \,dy \\
    &\le
    E_{11}^{(1)}(s)
    + C \frac{e^{-s}}{a} \int_{\mathbb{R}} y^2 f_{yy}^2 \,dy
    + C \frac{e^{-s}}{a} \int_{\mathbb{R}} f_{yy}^2 \,dy
    + C \frac{r^2 e^{-s}}{a} \int_{\mathbb{R}} y^2 g^2 \,dy \\
    &\quad
    + \frac{1}{2} \int_{\mathbb{R}} y^2 g^2 \,dy
    + C \int_{\mathbb{R}} f_y^2 \,dy
    + C e^{-\delta s} \int_{\mathbb{R}} g^2 \,dy \\
    &\quad
    - 4 \frac{e^{-s}}{a} \int_{\mathbb{R}} y f_{yy} g_y \,dy
    + C
    \left(
    \| y h(s) \|_{L^2}^2 + \frac{e^{2s}}{a^2} \| y \partial_y (N(e^{-s}v_y)) \|_{L^2}^2 
    \right),
\end{align}
which implies
\begin{align}
     &\frac{d}{ds} E_{11}^{(1)}(s)
    + \frac{1}{2} E_{11}^{(1)}(s) 
    +  \frac{1}{2} \int_{\mathbb{R}} y^2 g^2 \,dy \\
    &\le
    (1+C_1') E_{11}^{(1)}(s)
    + C \mathbb{E}_1^{(0)}(s)
    + C e^{-\delta s} \int_{\mathbb{R}} g^2 \,dy \\
    &\quad
    - 4 \frac{e^{-s}}{a} \int_{\mathbb{R}} y f_{yy} g_y \,dy
    + C
    \left(
    \| y h(s) \|_{L^2}^2 + \frac{e^{2s}}{a^2} \| y \partial_y (N(e^{-s}v_y)) \|_{L^2}^2 
    \right)
\end{align}
with some constant
$C_1'>0$.
Next, for $E_{12}^{(1)}(s)$,
Lemmas \ref{lem:E1} and \ref{lem:remainder} and the Schwarz inequality imply
\begin{align}
    &\frac{d}{ds} E_{12}^{(1)} (s)
    + \frac{1}{2} E_{12}^{(1)}(s)
    + 2 E_{11}^{(1)}(s) \\
    &=
    \frac{2r^2e^{-s}}{a} \int_{\mathbb{R}} y^{2} g^2 \,dy
    -2 \int_{\mathbb{R}} y f f_y \,dy
    -4 \frac{e^{-s}}{a} \int_{\mathbb{R}} y f_y f_{yy} \,dy
    -2 \frac{e^{-s}}{a} \int_{\mathbb{R}} f f_{yy} \,dy \\
    &\quad
    + \left( \frac{r'}{a} - \frac{ra'}{a^2} \right) \int_{\mathbb{R}} y^{2} fg \,dy
    + \int_{\mathbb{R}} y^{2} f
    \left( \frac{e^{s}}{a} \partial_y (N(e^{-s}v_y)) + h \right) \,dy \\
    &\le
    \eta_1' \int_{\mathbb{R}} y^2 f^2 \,dy
    + E_{11}^{(1)}(s)
    + C_2'(\eta_1') e^{-\delta s} \int_{\mathbb{R}} y^2 g^2 \,dy \\
    &\quad
    + C \int_{\mathbb{R}} f_y^2 \,dy
    + C \left( \frac{e^{-s}}{a} \right)^2 \int_{\mathbb{R}} f_{yy}^2 \,dy
    + C \int_{\mathbb{R}} f^2 \,dy \\
    &\quad
    + C
    \left(
    \| y h(s) \|_{L^2}^2 + \frac{e^{2s}}{a^2} \| y \partial_y (N(e^{-s}v_y)) \|_{L^2}^2 
    \right) \\
    &\le
    \eta_1' \int_{\mathbb{R}} y^2 f^2 \,dy
    + E_{11}^{(1)}(s)
    + C_2'(\eta_1') e^{-\delta s} \int_{\mathbb{R}} y^2 g^2 \,dy
    + C \mathbb{E}_1^{(0)}(s) \\
    &\quad
    + C
    \left(
    \| y h(s) \|_{L^2}^2 + \frac{e^{2s}}{a^2} \| y \partial_y (N(e^{-s}v_y)) \|_{L^2}^2 
    \right)
\end{align}
for arbitrary small $\eta_1' > 0$ determined later and some constant $C_2'(\eta_1')>0$.
Therefore, taking $c_1^{(1)}$ and $s_1^{(1)}$ so that
$c_1^{(1)} - (1+C_1') \ge \frac{1}{2}$
and
$c_1^{(1)} C_2'(\eta_1') e^{-\delta s} \le \frac{1}{4}$
holds for any $s \ge s_1^{(1)}$,
we conclude
\begin{align}
    &\frac{d}{ds} \mathbb{E}_{1}^{(1)} + \frac{1}{2} \mathbb{E}_1^{(1)} + \frac{1}{4} \int_{\mathbb{R}} y^2 g^2 \,dy \\
    &\le
    \eta_1' c_1^{(1)} \int_{\mathbb{R}} y^2 f^2 \,dy
    - 4 \frac{e^{-s}}{a} \int_{\mathbb{R}} y f_{yy} g_y \,dy
    + C \int_{\mathbb{R}} f_y^2 \,dy
    + C e^{-\delta s} \int_{\mathbb{R}} g^2 \,dy \\
    &\quad
    + C \mathbb{E}_1^{(0)}(s)
    + C
    \left(
    \| y h(s) \|_{L^2}^2 + \frac{e^{2s}}{a^2} \| y \partial_y (N(e^{-s}v_y)) \|_{L^2}^2 
    \right).
\end{align}
Taking $\eta_1'$ so that the first term of the right-hand side is bounded by
$\eta' \mathbb{E}_1^{(1)}(s)$
and using
$\int_{\mathbb{R}} f_y^2 \,dy \le C \mathbb{E}_1^{(0)}(s)$,
we complete the proof
of the second assertion.
The first assertion is proved in the same way as before and we omit the detail.
\end{proof}
Next, we define
\begin{align}
     \mathbb{E}_2(s)
    := E_{21}(s) + c_2 E_{22}(s),
\end{align}
where
$c_2$
is a sufficiently large constant determined later.
\begin{lemma}\label{lem:EE2}
There exist positive constants $c_2$ and $s_2$
such that for any $s \ge s_2$, we have
\begin{align}
	\mathbb{E}_2(s)
	\ge
	C \left( \int_{\mathbb{R}} f_{yy}^2 \,dy 
	+ \frac{e^{-s}}{2a}
    \int_{\mathbb{R}} f_{yyy}^2 \,dy 
    + \frac{r^2e^{-s}}{2a}
    \int_{\mathbb{R}} g_y^2 \,dy
    + \int_{\mathbb{R}} f_y^2 \,dy \right)
\end{align}
and
\begin{align}
    &\frac{d}{ds} \mathbb{E}_2(s)
    + \frac{1}{2} \mathbb{E}_2(s)
    + \frac{1}{4} \int_{\mathbb{R}} g_y^2 \,dy \\
    &\le
    C \mathbb{E}_{1}^{(0)} (s)
    + C
    \left(
    \| \partial_y h(s) \|_{L^2}^2 + \frac{e^{2s}}{a^2} \| \partial_y^2 (N(e^{-s}v_y)) \|_{L^2}^2 
    \right).
\end{align}
\end{lemma}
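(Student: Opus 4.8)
The plan is to follow the scheme already used in Lemmas~\ref{lem:EE0}--\ref{lem:EE1:1}: start from the two energy identities of Lemma~\ref{lem:E2}, absorb the coefficient terms using Lemma~\ref{lem:remainder} and the Schwarz inequality, and then form the combination $\mathbb{E}_2 = E_{21} + c_2 E_{22}$ with $c_2$ large and afterwards $s_2$ large (depending on $c_2$).

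First I would rewrite the $E_{21}$-identity of Lemma~\ref{lem:E2} so that $\tfrac12 E_{21}$ sits on the left, which turns the $\tfrac52 E_{21}$ on the right into $3 E_{21}$. By Lemma~\ref{lem:remainder} we have $|\tfrac{a'}{2ra^2}| \le C\tfrac{e^{-s}}{a}$ and $|\tfrac{ra'}{2a^2}| \le C\tfrac{r^2 e^{-s}}{a}$, so the terms $-\tfrac{a'}{2ra^2}\int f_{yyy}^2$ and $-\tfrac{ra'}{2a^2}\int g_y^2$ are bounded by $C E_{21}$ (using $\tfrac{e^{-s}}{a}\int f_{yyy}^2 \le 2E_{21}$ and $\tfrac{r^2 e^{-s}}{a}\int g_y^2 \le 2E_{21}$), while the Schwarz inequality applied to $\int g_y(\tfrac{e^s}{a}\partial_y^2(N(e^{-s}v_y)) + h_y)$ produces $\tfrac12\int g_y^2$ plus $C(\|\partial_y h\|_{L^2}^2 + \tfrac{e^{2s}}{a^2}\|\partial_y^2(N(e^{-s}v_y))\|_{L^2}^2)$. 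This gives
\[
\frac{d}{ds}E_{21} + \frac12 E_{21} + \frac12\int_{\mathbb{R}} g_y^2\,dy
\le (3 + C_1) E_{21} + C\left(\|\partial_y h\|_{L^2}^2 + \frac{e^{2s}}{a^2}\|\partial_y^2(N(e^{-s}v_y))\|_{L^2}^2\right)
\]
with some $C_1 > 0$. For $E_{22}$ I would similarly move $\tfrac12 E_{22}$ to the left, so the $\tfrac32 E_{22}$ on the right becomes $2 E_{22} = \int f_y^2 + 2\tfrac{r^2 e^{-s}}{a}\int f_y g_y$, which is $\le C\int f_y^2 + C e^{-\delta s}\int g_y^2$ by Lemma~\ref{lem:remainder} and Schwarz; the term $2\tfrac{r^2 e^{-s}}{a}\int g_y^2$ is $\le C e^{-\delta s}\int g_y^2$, the term $(\tfrac{r'}{a}-\tfrac{ra'}{a^2})\int f_y g_y$ is $\le C e^{-\delta s}(\int f_y^2 + \int g_y^2)$, and $\int f_y(\tfrac{e^s}{a}\partial_y^2(N(e^{-s}v_y)) + h_y)$ is handled by Schwarz. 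Hence
\[
\frac{d}{ds}E_{22} + \frac12 E_{22} + 2 E_{21}
\le C\int_{\mathbb{R}} f_y^2\,dy + C_2 e^{-\delta s}\int_{\mathbb{R}} g_y^2\,dy
+ C\left(\|\partial_y h\|_{L^2}^2 + \frac{e^{2s}}{a^2}\|\partial_y^2(N(e^{-s}v_y))\|_{L^2}^2\right)
\]
with some $C_2 > 0$.

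Adding the first estimate to $c_2$ times the second, I would fix $c_2$ so large that $2c_2 - (3+C_1) \ge \tfrac12$; then the $E_{21}$- and $E_{22}$-contributions on the left dominate $\tfrac12\mathbb{E}_2$. Next I would take $s_2$ so large that $c_2 C_2 e^{-\delta s} \le \tfrac14$ for $s \ge s_2$, leaving $\tfrac14\int g_y^2$ on the left, and bound the surviving $c_2 C\int f_y^2$ by $C\mathbb{E}_1^{(0)}(s)$ via the coercivity stated in Lemma~\ref{lem:EE1:0}; this yields the second assertion. The first assertion follows exactly as in the previous lemmas: since $\tfrac{r^2 e^{-s}}{a}\,|\int f_y g_y| \le \tfrac14\int f_y^2 + C e^{-\delta s}\cdot\tfrac{r^2 e^{-s}}{2a}\int g_y^2$, for $s$ large the cross term in $c_2 E_{22}$ is absorbed into $E_{21}$ and the stated lower bound for $\mathbb{E}_2$ holds. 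No step is really an obstacle here; the only thing requiring care is the order of the choices (fix $c_2$ first, then $s_2$ depending on $c_2$), and the bookkeeping that the only quantity left over on the right-hand side, namely $\int f_y^2$, is precisely the one controlled by the coercivity of $\mathbb{E}_1^{(0)}$, just as in Lemmas~\ref{lem:EE0} and~\ref{lem:EE1:0}.
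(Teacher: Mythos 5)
Your proposal is correct and follows essentially the same route as the paper: the same two identities from Lemma~\ref{lem:E2}, absorption of the coefficient terms via Lemma~\ref{lem:remainder} and Schwarz, then fixing $c_2$ large and afterwards $s_2$ large so that $c_2 C_2 e^{-\delta s}\le \tfrac14$, with the leftover $C\int_{\mathbb{R}} f_y^2\,dy$ controlled by the coercivity of $\mathbb{E}_1^{(0)}$. The only difference is cosmetic bookkeeping (moving $\tfrac12 E_{21}$ to the left before absorbing, giving $3+C_1$ in place of the paper's $\tfrac52+2C_1$), and you supply the coercivity argument for the first assertion that the paper omits.
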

\begin{proof}
By Lemmas \ref{lem:E2} and \ref{lem:remainder} and the Schwarz inequality, we have
\begin{align}
    \frac{d}{ds} E_{21}(s) + \int_{\mathbb{R}} g_y^2 \,dy
    &\le
    \frac{5}{2} E_{21}(s) + C_1 \frac{e^{-s}}{a} \int_{\mathbb{R}} f_{yyy}^2 \,dy
    + C_1 \frac{r^2 e^{-s}}{a} \int_{\mathbb{R}} g_y^2 \,dy \\
    &\quad
    + \frac{1}{2} \int_{\mathbb{R}} g_y^2 \,dy
    + C \left( \| h_y(s) \|_{L^2}^2 + \frac{e^{2s}}{a^2} \| \partial_y^2 N (e^{-s}v_y) \|_{L^2}^2 \right) \\
    &\le
    \left( \frac{5}{2} + 2C_1 \right) E_{21}(s)
    + \frac{1}{2} \int_{\mathbb{R}} g_y^2 \,dy \\
    &\quad
    + C \left( \| h_y(s) \|_{L^2}^2 + \frac{e^{2s}}{a^2} \| \partial_y^2 N (e^{-s}v_y) \|_{L^2}^2 \right).
\end{align}
with some $C_1 > 0$ and 
\begin{align}
    &\frac{d}{ds} E_{22}(s) + \frac{1}{2} E_{22}(s) + 2 E_{21}(s) \\
    &\le 
    2E_{22}(s) + C \frac{r^2 e^{-s}}{a} \int_{\mathbb{R}} g_y^2 \,dy
    + C \frac{r^2 e^{-s}}{a} \int_{\mathbb{R}} f_y^2 \,dy \\
    &\quad
    + C \int_{\mathbb{R}} f_y^2 \,dy
    + C \left( \| h_y(s) \|_{L^2}^2 + \frac{e^{2s}}{a^2} \| \partial_y^2 N (e^{-s}v_y) \|_{L^2}^2 \right) \\
    &\le
    C_2 e^{-\delta s} \int_{\mathbb{R}} g_y^2 \,dy
    + C \int_{\mathbb{R}} f_y^2 \,dy \\
    &\quad
    + C \left( \| h_y(s) \|_{L^2}^2 + \frac{e^{2s}}{a^2} \| \partial_y^2 N (e^{-s}v_y) \|_{L^2}^2 \right)
\end{align}
with some $C_2>0$.
We take
$c_2$
sufficiently large so that
$2 c_2 - \frac{5}{2} - 2C_1 \ge \frac{1}{2}$.
Then, letting
$s_2$
sufficiently large so that 
$c_2 C_2 e^{-\delta s} \le \frac{1}{4}$
holds for any $s \ge s_2$,
we conclude
\begin{align}
    &\frac{d}{ds} \mathbb{E}_{2}(s) + \frac{1}{2} \mathbb{E}_2(s) + \frac{1}{4}\int_{\mathbb{R}} g_y^2 \,dy \\
    &\le 
    C \int_{\mathbb{R}} f_y^2 \,dy
    + C \left( \| h_y(s) \|_{L^2}^2 + \frac{e^{2s}}{a^2} \| \partial_y^2 N (e^{-s}v_y) \|_{L^2}^2 \right)
\end{align}
for any $s \ge s_2$.
This and
$\int_{\mathbb{R}} f_y^2 \,dy \le C \mathbb{E}_{1}^{(0)}(s)$
complete the proof
of the second assertion.
The first assertion is proved in the same way as before and we omit the detail.
\end{proof}

Finally, let us combine the estimates in Lemmas \ref{lem:EE0}--\ref{lem:EE2}.
Fix 
\begin{align}
    \lambda \in \left( 0, \min \left\{ \frac{1}{2}, \frac{2(\beta+1)}{\alpha-\beta+1}, \frac{2\alpha -\beta +1}{\alpha - \beta +1} \right\}  \right)
\end{align}
and let
$s_{\ast}' = \max\{ s_0, s_1^{(0)}, s_1^{(1)}, s_2\}$.
We first note that
the Schwarz inequality and Lemma \ref{lem:remainder} imply
\begin{align}
    -4 \frac{e^{-s}}{a} \int_{\mathbb{R}} y f_{yy} g_y \,dy
    &\le
    \eta''  \mathbb{E}_1^{(1)}(s)
    + C(\eta'') e^{-  \delta s} \int_{\mathbb{R}} g_y^2 \,dy
\end{align}
for any $\eta'' > 0$.
We take
$\eta, \eta_1'$ in Lemmas \ref{lem:EE0} and \ref{lem:EE1:1} and $\eta''$ above so that
$\frac{1}{2} - \eta \le \lambda$
and
$\eta'+\eta'' \le \frac{1}{2} - \lambda$.
Then, we take
$\tilde{c}_0 \gg \tilde{c}_1^{(0)} \gg \tilde{c}_1^{(1)} \gg 1$
and define
\begin{align}
    \mathcal{E}(s) 
    &=
    \tilde{c}_0 \mathbb{E}_0(s) + \tilde{c}_1^{(0)} \mathbb{E}_1^{(0)}(s)
    + \tilde{c}_1^{(1)} \mathbb{E}_1^{(1)}(s) + \mathbb{E}_2(s) + E_{m1}(s),\\
    \mathcal{G}(s)
    &=
    \int_{\mathbb{R}}
    \left(
    \tilde{c}_0 G^2 + \tilde{c}_1^{(0)} g^2 + \tilde{c}_1^{(1)} y^2 g^2 + g_y^2
    \right) \,dy,\\
    \widetilde{\mathcal{E}}(s)
    &=
    \mathcal{E}(s) + E_{m2}(s).
\end{align}
Then, adding the estimates in Lemmas \ref{lem:Em} and \ref{lem:EE0}--\ref{lem:EE2},
we conclude that
\begin{align}
    &\frac{d}{ds} \mathcal{E}(s) + \lambda \mathcal{E}(s)
    + \frac{1}{4} \mathcal{G}(s) + m_s(s)^2 \\
    &\le 
    C e^{-\delta s} \int_{\mathbb{R}} g^2 \, dy 
    + C e^{-\delta s} \int_{\mathbb{R}} g_y^2 \,dy
    + \left( \frac{3r^2e^{-s}}{4a} - \frac{ra'}{2a^2} \right) m_s(s)^2 \\
    &\quad
    + C \left( \| H(s) \|_{L^2}^2 + \| h(s) \|_{H^{0,1}}^2 + \| h_y (s) \|_{L^2}^2 \right) \\
    &\quad
    + C \left( \frac{e^s}{a} \right)^2
    \left(
    \| N(e^{-s} v_y) \|_{L^2}^2 + \left\| \partial_y N \left(e^{-s} v_y\right)  \right\|_{H^{0,1}}^2
    + \| \partial_y^2 (N(e^{-s}v_y)) \|_{L^2}^2 
    \right)
\end{align}
holds for $s \ge s_{\ast}'$.
Moreover, Lemma \ref{lem:remainder} leads to
\begin{align}
    \left( \frac{3r^2e^{-s}}{4a} - \frac{ra'}{2a^2} \right)
    \le 
    Ce^{-\delta s}.
\end{align}
Therefore, we finally reach the following energy estimate.
\begin{proposition}\label{prop:energyest}
There exist constants $s_{\ast} > 0$ and $C > 0$ such that for any $s \ge s_{\ast}$,
we have
\begin{align}
    &\frac{d}{ds} \mathcal{E}(s) + \lambda \mathcal{E}(s)
    + \frac{1}{8} \left( \mathcal{G}(s) + m_s^2 \right)\\
    &\le
    C \left( \| H(s) \|_{L^2}^2 + \| h(s) \|_{H^{0,1}}^2 + \| h_y (s) \|_{L^2}^2 \right) \\
    &\quad
    + C \left( \frac{e^s}{a} \right)^2
    \left(
    \| N(e^{-s} v_y) \|_{L^2}^2 + \left\| \partial_y N \left(e^{-s} v_y\right)  \right\|_{H^{0,1}}^2
    + \| \partial_y^2 (N(e^{-s}v_y)) \|_{L^2}^2 
    \right).
\end{align}
\end{proposition}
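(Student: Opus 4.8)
The strategy is to superpose the energy identities and estimates already established in Lemma~\ref{lem:Em} and Lemmas~\ref{lem:EE0}--\ref{lem:EE2} with a carefully ordered hierarchy of large constants, and then to absorb the remaining genuinely small remainder terms into the left-hand side for $s$ large. The key structural point is that on the right-hand sides of Lemmas~\ref{lem:EE1:0}, \ref{lem:EE1:1} and \ref{lem:EE2} the lower-order coupling energies ($C\,\mathbb{E}_0$ in Lemma~\ref{lem:EE1:0}; $C\,\mathbb{E}_1^{(0)}$ in Lemmas~\ref{lem:EE1:1} and \ref{lem:EE2}) appear with a fixed constant, while the corresponding dissipation $\lambda\mathbb{E}_\bullet$ is available one level below with a coefficient we are free to make as large as we wish; hence the standard "$\tilde c_0 \gg \tilde c_1^{(0)} \gg \tilde c_1^{(1)} \gg 1$'' device closes the system.

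Concretely, I would first fix $\lambda$ in the stated interval, which is nonempty precisely because $(\al,\bet)\in\Omega_1$ (so in particular $\lambda<\tfrac12$), with $\delta$ as in Lemma~\ref{lem:remainder}. Next I choose the free small parameters: $\eta$ in Lemma~\ref{lem:EE0}, $\eta'$ in Lemma~\ref{lem:EE1:1}, and the splitting constant $\eta''$ used to estimate the cross term via $-4\frac{e^{-s}}{a}\int_{\mathbb{R}} y f_{yy} g_y\,dy \le \eta''\mathbb{E}_1^{(1)}(s) + C(\eta'')e^{-\delta s}\int_{\mathbb{R}} g_y^2\,dy$ (Schwarz inequality together with Lemma~\ref{lem:remainder}), subject to $\tfrac12-\eta\le\lambda$ and $\eta'+\eta''\le\tfrac12-\lambda$; this guarantees that every favorable coefficient in front of an $\mathbb{E}$-energy in Lemmas~\ref{lem:Em}--\ref{lem:EE2} survives at level $\ge\lambda$ once the $\eta$-perturbations are subtracted. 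With $\tilde c_0 \gg \tilde c_1^{(0)} \gg \tilde c_1^{(1)} \gg 1$ chosen accordingly, I then add $\tilde c_0$ times the estimate of Lemma~\ref{lem:EE0}, $\tilde c_1^{(0)}$ times that of Lemma~\ref{lem:EE1:0}, $\tilde c_1^{(1)}$ times that of Lemma~\ref{lem:EE1:1}, the estimate of Lemma~\ref{lem:EE2}, and the first identity of Lemma~\ref{lem:Em}. On the left this produces exactly $\frac{d}{ds}\mathcal{E}(s)+\lambda\mathcal{E}(s)+\tfrac14\mathcal{G}(s)+m_s(s)^2$, since the weighted $g$- and $g_y$-dissipations recombine into $\tfrac14\mathcal{G}$ and $E_{m1}$ contributes the $m_s^2$ term; on the right the only surviving non-forcing terms are $Ce^{-\delta s}\int_{\mathbb{R}} g^2\,dy$, $Ce^{-\delta s}\int_{\mathbb{R}} g_y^2\,dy$ and $\big(\tfrac{3r^2e^{-s}}{4a}-\tfrac{ra'}{2a^2}\big)m_s(s)^2$ coming from Lemma~\ref{lem:Em}, together with the genuine forcing $\|H(s)\|_{L^2}^2+\|h(s)\|_{H^{0,1}}^2+\|h_y(s)\|_{L^2}^2$ and $(e^s/a)^2\big(\|N(e^{-s}v_y)\|_{L^2}^2+\|\partial_y N(e^{-s}v_y)\|_{H^{0,1}}^2+\|\partial_y^2(N(e^{-s}v_y))\|_{L^2}^2\big)$. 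This is precisely the intermediate estimate derived in the discussion above, valid for all $s\ge s_\ast'$.

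To finish, I invoke Lemma~\ref{lem:remainder} once more, which yields $\tfrac{3r^2e^{-s}}{4a}-\tfrac{ra'}{2a^2}\le Ce^{-\delta s}$, and note that by construction $\mathcal{G}(s)\ge \int_{\mathbb{R}} g^2\,dy+\int_{\mathbb{R}} g_y^2\,dy$ (the coefficients of $g^2$ and $g_y^2$ in $\mathcal{G}$ are $\ge 1$, while the remaining terms $\tilde c_0 G^2+\tilde c_1^{(1)}y^2 g^2$ are nonnegative). Choosing $s_\ast\ge s_\ast'$ large enough that the coefficient $Ce^{-\delta s}\le\tfrac18$ in all three remainder terms for $s\ge s_\ast$, I obtain $\tfrac14\mathcal{G}(s)-Ce^{-\delta s}\big(\int_{\mathbb{R}} g^2\,dy+\int_{\mathbb{R}} g_y^2\,dy\big)\ge\tfrac18\mathcal{G}(s)$ and $m_s(s)^2-Ce^{-\delta s}m_s(s)^2\ge\tfrac18 m_s(s)^2$, which moves all three remainder contributions to the left-hand side and produces the claimed inequality.

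I expect no substantial obstacle at the level of this proposition: the genuinely delicate point, namely that the cross term $-4\frac{e^{-s}}{a}\int_{\mathbb{R}} y f_{yy}g_y\,dy$ in $\frac{d}{ds}E_{11}^{(1)}(s)$ is not controllable by the $\mathbb{E}_1^{(1)}$-energy alone, was already resolved by introducing the higher-regularity energy $\mathbb{E}_2$ and the extra weight-free $g_y$-dissipation in Lemma~\ref{lem:EE2}; here it merely resurfaces as the harmless $Ce^{-\delta s}\int_{\mathbb{R}} g_y^2\,dy$ term. What remains is purely the bookkeeping of the constant hierarchy and of the threshold $s_\ast$.
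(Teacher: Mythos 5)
Your proposal is correct and follows essentially the same route as the paper: sum Lemmas~\ref{lem:Em} and \ref{lem:EE0}--\ref{lem:EE2} with the hierarchy $\tilde c_0\gg\tilde c_1^{(0)}\gg\tilde c_1^{(1)}\gg 1$, split the cross term $-4\frac{e^{-s}}{a}\int_{\mathbb{R}} yf_{yy}g_y\,dy$ via the Schwarz inequality and Lemma~\ref{lem:remainder}, and absorb the resulting $Ce^{-\delta s}$ remainders into $\frac14\mathcal{G}(s)+m_s(s)^2$ for $s\ge s_{\ast}$. One small remark: the constraint on $\eta$ should read $\frac12-\eta\ge\lambda$ (so that the coefficient of $\mathbb{E}_0$ surviving on the left is at least $\lambda$, with enough slack to absorb the $C\mathbb{E}_0$ term coming from Lemma~\ref{lem:EE1:0}); you have reproduced the reversed inequality as printed in the text, but your surrounding explanation makes clear that you intend the correct direction.
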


\section{Estimates of remainder terms and the proof of a priori estimate}
In this section, we give estimates of the right-hand side of Proposition \ref{prop:energyest},
and complete the a priori estimate, which ensures the existence of the global solution.

\subsection{Estimates of remainder terms}
First, by the Hardy-type inequality in Lemma \ref{lem:hardy}, we have
\begin{align}
    \| H(s) \|_{L^2}^2 \le 4 \| y h(s) \|_{L^2}^2,\quad
    \| N(e^{-s} v_y) \|_{L^2}^2 \le 4 \| y \partial_y N(e^{-s} v_y) \|_{L^2}^2.
\end{align}
Hence, it suffices to estimate
\begin{align}
    \| h(s) \|_{H^{0,1}}^2,\quad
    \| h_y (s) \|_{L^2}^2, \quad 
    \left( \frac{e^s}{a} \right)^2 \left\| \partial_y N \left(e^{-s} v_y\right)  \right\|_{H^{0,1}}^2,\quad
    \left( \frac{e^s}{a} \right)^2 \| \partial_y^2 (N(e^{-s}v_y)) \|_{L^2}^2.
\end{align}
First, from the definition of $h$ (see \eqref{eq:h}) and Lemma \ref{lem:remainder},
we easily obtain
\begin{align}\label{eq:4:est:h}
    \| h(s) \|_{H^{0,1}}^2 + \| h_y(s) \|_{L^2}^2
    &\le
    C e^{-2\delta s} \left( m(s)^2 + m_s(s)^2 \right) \le e^{-2\delta s}\widetilde{\mathcal{E}}(s).
\end{align}

Next, we estimate the nonlinear term.
By Assumption (N), we see that
\begin{align}
    \partial_y N (e^{-s} v_y) 
    &=
    2\mu e^{-2s} v_y v_{yy} + \tilde{N}'(e^{-s} v_y) e^{-s}v_{yy},\\
    \partial_y^2 N (e^{-s} v_y) 
    &=
    2\mu e^{-2s} (v_{yy}^2 + v_y v_{yyy} )
    + \tilde{N}''(e^{-s}v_y) e^{-2s} v_{yy}^2
    + \tilde{N}' (e^{-s} v_y) e^{-s} v_{yyy}.
\end{align}
Therefore, by
$|\tilde{N}'(z)|\le C|z|^{p-1}$,
the Sobolev embedding theorem,
and
\[
	\| f \|_{H^{2,1}}^2
	\le C \left( \frac{e^{-s}}{a} \right)^{-1} 
	 \big(\mathbb{E}_1^{(0)}(s)
    +  \mathbb{E}_1^{(1)}(s) \big)	
	\le C e^{\frac{2\alpha-\beta+1}{\alpha-\beta+1}s} \widetilde{\mathcal{E}}(s),
\]
we have
\begin{align} \label{eq:4:est:pyN}
    &\left( \frac{e^s}{a} \right)^2 \left\| \partial_y N \left(e^{-s} v_y\right)  \right\|_{H^{0,1}}^2 \\
    &\le
    C \left( \frac{e^s}{a} \right)^2
        e^{-4s} \| v_y v_{yy} \|_{H^{0,1}}^2
    + C \left( \frac{e^s}{a} \right)^2 e^{-2ps}
        \| |v_y|^{p-1} v_{yy} \|_{H^{0,1}}^2 \\
    &\le
    C e^{-2(1+\frac{\alpha}{\alpha-\beta+1})s} \| v_y \|_{L^{\infty}}^2 \| v_{yy} \|_{H^{0,1}}^2
    + C e^{-2(p-2)s} e^{-2(1+\frac{\alpha}{\alpha-\beta+1})s}
    		 \| v_y \|_{L^{\infty}}^{2(p-1)} \| v_{yy} \|_{H^{0,1}}^2 \\
    &\le
    C e^{-\frac{2(2\alpha -\beta +1)}{\alpha - \beta +1}s }
    \| v_y \|_{H^1}^2 \| v_{yy} \|_{H^{0,1}}^2 
    + C  e^{-2(p-2)s} e^{-\frac{2(2\alpha -\beta +1)}{\alpha - \beta +1}s } 
    		\| v_y \|_{H^1}^{2(p-1)} \| v_{yy} \|_{H^{0,1}}^2 \\
    &\le
    C \left(
    e^{-\frac{2(2\alpha -\beta +1)}{\alpha - \beta +1}s}  
    		( \| f \|_{H^{2,0}}^2 + m(s)^2 )
    +  e^{-2(p-2)s} e^{-\frac{2(2\alpha -\beta +1)}{\alpha - \beta +1}s } 
    		 ( \| f \|_{H^{2,0}}^2 + m(s)^2 )^{p-1}
    \right)\\
    &\quad
    \times ( \| f \|_{H^{2,1}}^2 + m(s)^2 ) \\
    & \le
    C e^{-\frac{2\alpha -\beta +1}{\alpha - \beta +1}s } \widetilde{\mathcal{E}}(s)^2
    + e^{-\left[ 2(p-2) + \frac{2\alpha -\beta +1}{\alpha - \beta +1} \right]s } \widetilde{\mathcal{E}}(s)^p.
\end{align}
Similarly, by $|\tilde{N}''(z)| \le C |z|^{p-2}$,
the Sobolev embedding theorem, and
\[
	\| f \|_{H^{3,0}}^2
	\le C \left( \frac{e^{-s}}{a} \right)^{-1}
	 \big(\mathbb{E}_1^{(0)}(s)
    +  {E}_{21}(s) \big)
	\le C e^{\frac{2\alpha-\beta+1}{\alpha-\beta+1}s} \widetilde{\mathcal{E}}(s),
\]
we obtain
\begin{align}\label{eq:4:est:pyyN}
    &\left( \frac{e^s}{a} \right)^2 \left\| \partial_y^2 N \left(e^{-s} v_y\right)  \right\|_{L^2}^2 \\
    &\le
    C \left( \frac{e^s}{a} \right)^2 e^{-4s}
        \left( \| v_{yy}^2 \|_{L^2}^2 + \| v_y v_{yyy} \|_{L^2}^2 \right) \\
    &\quad
    + C \left( \frac{e^s}{a} \right)^2 e^{-2ps}
    \left(
    \| |v_y|^{p-2} v_{yy}^2 \|_{L^2}^2
    + \| |v_y|^{p-1} v_{yyy} \|_{L^2}^2
    \right) \\
    &\le
    C e^{-2 (1+\frac{\alpha}{\alpha-\beta+1})s }
    ( \| v_{yy} \|_{L^{\infty}}^2 \| v_{yy} \|_{L^2}^2
    + \| v_y \|_{L^{\infty}}^2 \| v_{yyy} \|_{L^2}^2 ) \\
    &\quad
    + C  e^{-2(p-2)s} e^{-2 (1+\frac{\alpha}{\alpha-\beta+1})s } 
        ( \| v_y \|_{L^{\infty}}^{2(p-2)} \| v_{yy} \|_{L^{\infty}}^2 \| v_{yy} \|_{L^2}^2
            + \| v_y \|_{L^{\infty}}^{2(p-1)} \| v_{yyy} \|_{L^2}^2 ) \\
    &\le
    C  e^{-\frac{2(2\alpha - \beta +1)}{\alpha-\beta+1})s } 
    ( \| v_{yy} \|_{H^{1,0}}^2 \| v_{yy} \|_{L^2}^2
    + \| v_y \|_{H^{1,0}}^2 \| v_{yyy} \|_{L^2}^2 ) \\
    &\quad
    + C  e^{-2(p-2)s} e^{-\frac{2(2\alpha - \beta +1)}{\alpha-\beta+1})s } 
        ( \| v_y \|_{H^{1,0}}^{2(p-2)} \| v_{yy} \|_{L^{\infty}}^2 \| v_{yy} \|_{L^2}^2
            + \| v_y \|_{H^{1,0}}^{2(p-1)} \| v_{yyy} \|_{L^2}^2 ) \\
    &\le 
    C \left(
         e^{-\frac{2(2\alpha - \beta +1)}{\alpha-\beta+1})s } 
        	 ( \| f \|_{H^{2,0}}^2 + m(s)^2 )
        + e^{-2(p-2)s} e^{-\frac{2(2\alpha - \beta +1)}{\alpha-\beta+1})s } 
        	( \| f \|_{H^{2,0}}^2 + m(s)^2 )^{p-1}
    \right) \\
    &\quad
    \times ( \| f \|_{H^{3,0}}^2 + m(s)^2 )\\
    & \le
    C e^{-\frac{2\alpha -\beta +1}{\alpha - \beta +1}s } \widetilde{\mathcal{E}}(s)^2
    + e^{-\left[ 2(p-2) +\frac{2\alpha -\beta +1}{\alpha - \beta +1} \right]s }  \widetilde{\mathcal{E}}(s)^p.
\end{align} 

\subsection{Proof of a priori estimate}
Combining the energy estimates obtained in Proposition \ref{prop:energyest}
with the estimates of remainder terms given in the previous subsection, we deduce 

\begin{align}
    \qquad &\frac{d}{ds} \mathcal{E}(s) + \lambda \mathcal{E}(s)
    + \frac{1}{8} \left( \mathcal{G}(s) + m_s(s)^2 \right)\\
\label{eq:4:est:mathcalE}
    &\le
    C e^{-2\delta s} \widetilde{\mathcal{E}}(s)
    + C e^{-\frac{2\alpha -\beta +1}{\alpha - \beta +1}s } \widetilde{\mathcal{E}}(s)^2
    + C e^{-\left[ 2(p-2) + \frac{2\alpha -\beta +1}{\alpha - \beta +1} \right]s }  \widetilde{\mathcal{E}}(s)^p
\end{align} 
From Lemmas \ref{lem:Em} and \ref{lem:remainder},
we see that
\begin{align}
    \frac{d}{ds} E_{m2}(s)
    &=
    2 E_{m1}(s) +
    \left( \frac{r'}{a} - \frac{ra'}{a^2}
    \right) m(s) m_s(s) \\
    &\le
    C e^{-\delta s} m_s(s)^2
    + 
    \frac{1}{16} m_s(s)^2
    +
    C e^{-2\delta s} m(s)^2.
\end{align}
Therefore, there exists constants
$s_m \ge s_{\ast}$ and $c > 0$
such that for any $s \ge s_m$, we have
\begin{align}
    &\frac{d}{ds} \widetilde{\mathcal{E}}(s)
    + \lambda \widetilde{\mathcal{E}}(s)
    + c (\mathcal{G}(s) + m_s(s)^2) \\
\label{eq:4:est:tildeE}
    &\le
    	C_3 e^{-2\delta s} \widetilde{\mathcal{E}}(s)
    + C_3
    \left( e^{-\frac{2\alpha -\beta +1}{\alpha - \beta +1}s } \widetilde{\mathcal{E}}(s)^2
    + e^{-\left[ 2(p-2) + \frac{2\alpha -\beta +1}{\alpha - \beta +1} \right]s }  \widetilde{\mathcal{E}}(s)^p
    \right)
\end{align}
with  some $C_3>0$. Define 
\begin{align}
    \Lambda(s)
    = \exp \left( - C_3 \int_{s_m}^s e^{-2\delta \sigma} \,d\sigma \right).
\end{align}
Note that
\begin{align}
    \Lambda(s)
    = \exp \left( \frac{C_3}{2\delta}
    \left( e^{-2\delta s} - e^{-2\delta s_m} \right)
    \right)
    \sim 1
    \quad \text{and} \quad
    \Lambda (s_m) = 1.
\end{align}
Multiplying \eqref{eq:4:est:tildeE} by $\Lambda(s)$,
we deduce
\begin{align}
    &\frac{d}{ds} \left[ \Lambda(s) \widetilde{\mathcal{E}} (s) \right] 
    + \lambda \Lambda(s) \mathcal{E}(s)
    + c \Lambda(s) \left( \mathcal{G}(s) + m_s(s)^2 \right) \\
    &\le
	C_3 \Lambda(s) 
    \left( e^{-\frac{2\alpha -\beta +1}{\alpha - \beta +1}s } \widetilde{\mathcal{E}}(s)^2
    + e^{-\left[ 2(p-2) + \frac{2\alpha -\beta +1}{\alpha - \beta +1} \right]s }  \widetilde{\mathcal{E}}(s)^p
    \right).
\end{align}
Integrating the above over $[s_m, s]$, we have
\begin{align}
    \Lambda(s) \widetilde{\mathcal{E}}(s)
    &\le
    \widetilde{\mathcal{E}}(s_m)
    + C_3
    \int_{s_m}^s
    \Lambda(\sigma) 
    \left( e^{-\frac{2\alpha -\beta +1}{\alpha - \beta +1}s } \widetilde{\mathcal{E}}(\sigma)^2
    + e^{-\left[ 2(p-2) + \frac{2\alpha -\beta +1}{\alpha - \beta +1} \right]s }  \widetilde{\mathcal{E}}(\sigma)^p
    \right) \,d\sigma
\end{align}
Finally, we put
\begin{align}
    \widetilde{\mathcal{E}}_{\max} (s) = \max_{\sigma \in [s_m,s]} \widetilde{\mathcal{E}}(\sigma)
\end{align}
for $s \ge s_m$.
Then, the above estimate implies
\begin{align}
    \widetilde{\mathcal{E}}_{\max} (s)
    &\le
    C_0 \widetilde{\mathcal{E}}(s_m)
    + C_0' \left( \widetilde{\mathcal{E}}_{\max} (s)^2 + \widetilde{\mathcal{E}}_{\max} (s)^p \right)
\end{align}
with some constants $C_0, C_0' > 0$,
where we have used
$\delta > 0$
and
$p > \frac{-\beta + 1}{\alpha-\beta+1}$
(see Remark \ref{rem:11}).
Thus, we conclude the a priori estimate
\begin{align}\label{eq:4:apriori}
    \widetilde{\mathcal{E}}_{\max} (s) \le 2C_0 \widetilde{\mathcal{E}}(s_m)
\end{align}
for all $s \ge s_m$,
provided that
$\widetilde{\mathcal{E}}(s_m)$
is sufficiently small.
From the local existence result (Proposition B.2),
we see that, for sufficiently small initial data, the local solution uniquely exists over $[0,s_m]$,
and it satisfies
$\widetilde{\mathcal{E}}(s_m) \le C ( \| u_0 \|_{H^{2,1}\cap H^{3,0}} + \| u_1 \|_{H^{0,1}\cap H^{1,0}})$
(for the detail, see the proof of Proposition B.2 (vi) ).
Thus, $\widetilde{\mathcal{E}}(s_m)$ can be controlled by the norm of initial data.
This and Proposition B.2 (iii) (blow-up alternative) indicate the existence of the global solution
if the initial data $(u_0, u_1)$ is sufficiently small.

It remains to prove the asymptotic estimate.
To this end, we go back to the estimate \eqref{eq:4:est:mathcalE}.
By virtue of the a priori estimate \eqref{eq:4:apriori}, we have
\begin{align}
    \frac{d}{ds} \mathcal{E}(s) + \lambda \mathcal{E}(s)
    + \frac{1}{8} \left( \mathcal{G}(s) + m_s(s)^2 \right)
    &\le
    C e^{- \min\{ 2\delta, \frac{2\alpha -\beta +1}{\alpha - \beta +1}, 2(p-2) + \frac{2\alpha -\beta +1}{\alpha - \beta +1} \} s } 
    \widetilde{\mathcal{E}}(s_m) \\
    &=C  e^{- \min\{ \frac{2(\beta+1)}{\alpha-\beta+1}, \frac{2\alpha -\beta +1}{\alpha - \beta +1} \} s } \widetilde{\mathcal{E}}(s_m),
\end{align}
where we have also used
$\widetilde{\mathcal{E}}(s_m)$,
which can be assumed without loss of generality.
Now, recall
\begin{align}
    \lambda \in \left( 0, \min \left\{ \frac{1}{2}, \frac{2(\beta+1)}{\alpha-\beta+1}, \frac{2\alpha -\beta +1}{\alpha - \beta +1} \right\}  \right),
\end{align}
and multiply the above estimate by $e^{\lambda s}$.
Then, we obtain
\begin{align}
    \frac{d}{ds} \left[ e^{\lambda s} \mathcal{E}(s) \right]
    + \frac{1}{8} e^{\lambda s} \left( \mathcal{G}(s) + m_s(s)^2 \right)
    \le C e^{\lambda -  \min\{ \frac{2(\beta+1)}{\alpha-\beta+1}, \frac{2\alpha -\beta +1}{\alpha - \beta +1} \}  s } \widetilde{\mathcal{E}}(s_m).
\end{align}
Integrating this over $[s_m, s]$ implies
\begin{align}
    e^{\lambda s} \mathcal{E}(s)
    + \frac{1}{8} \int_{s_m}^s e^{\lambda \sigma} \left( \mathcal{G}(\sigma) + m_s(\sigma)^2 \right) \,d\sigma
    \le
    C \widetilde{\mathcal{E}}(s_m).
\end{align}
Therefore, we have
\begin{align}\label{eq:4:decay:mathcalE}
    \mathcal{E}(s) \le C e^{-\lambda s} \widetilde{\mathcal{E}}(s_m)
\end{align}
for all $s \ge s_m$.
Moreover, we deduce
\begin{align}
    \int_{s_m}^s e^{\lambda \sigma} m_s(\sigma)^2 \,d\sigma \le C \widetilde{\mathcal{E}}(s_m).
\end{align}
This shows, for any $s \ge s' \ge s_m$,
\begin{align}
    | m(s) - m (s') |
    &=
    \left| \int_{s'}^s m_s(\sigma) \,d\sigma \right| \\
    &\le
    \left( \int_{s'}^s e^{-\lambda \sigma} \,d\sigma \right)^{1/2}
        \left( \int_{s'}^{s} e^{\lambda \sigma} m_s(\sigma)^2 \,d\sigma \right)^{1/2} \\
    &\le
    \left( \frac{1}{\lambda} ( e^{-\lambda s'} - e^{-\lambda s} ) \right)^{1/2}
    C \widetilde{\mathcal{E}}_m(s_m)^{1/2} \\
    &\to 0 \quad (s',s \to \infty).
\end{align}
This means that the limit
$m^{\ast} = \lim_{s\to \infty} m(s)$
exists and satisfies
\begin{align}
    | m^{\ast} - m(s) |^2 \le C \widetilde{\mathcal{E}}(s_m) e^{-\lambda s}
\end{align}
for all $s \ge s_m$.
Consequently, by the above estimate and \eqref{eq:4:decay:mathcalE}, we have
\begin{align}
    \| v(s) - m^{\ast} \varphi \|_{L^2}^2
    &=
    \| m(s) \varphi + f(s) - m^{\ast} \varphi \|_{L^2}^2 \\
    &\le
    C \left(
        | m^{\ast} - m(s) |^2 \| \varphi \|_{L^2}^2 + \| f(s) \|_{L^2}^2 
    \right) \\
    &\le
    C e^{-\lambda s} \widetilde{\mathcal{E}}(s_m) \\
    &\le
    C e^{-\lambda s} \left( \| u_0 \|_{H^{2,1} \cap H^{3,0}}^2 + \| u_1 \|_{H^{0,1}\cap H^{1,0}} \right)^2
\end{align}
for $s \ge s_m$,
which implies
\begin{align}
    \| u(t) - m^{\ast} G(R(t)) \|_{L^2}^2
    &\le
    C (R(t)+1)^{-\frac{1}{2}-\lambda} \left( \| u_0 \|_{H^{2,1} \cap H^{3,0}}^2 + \| u_1 \|_{H^{0,1}\cap H^{1,0}} \right)^2
\end{align}
for $t \ge t_m := R^{-1}(e^s-1)$.
This completes the proof of the asymptotic estimate.

\appendix
\section{ A general lemma for the energy identity}
In this appendix, we give a proof of
Lemma \ref{lem:YoWa:en}.
Actually, we give a slightly more general version of it and prove the following lemma.
If we take $k=\frac{1}{2}$ and $c_3(s) \equiv 1$, then we have Lemma \ref{lem:YoWa:en}. 
\begin{lemma}\label{lem:YoWa:en:app}
Let
$k,l,m\in \mathbb{R}$,
$n \in \mathbb{N}\cup \{0\}$,
and let
$c_j = c_j(s) \ (j=1,2,3,4)$
be smooth functions defined on
$[0,\infty)$.
We consider a system for two functions
$f=f(s,y)$ and $g=g(s,y)$
given by
\begin{align}\label{eq:lem:YoWa}
    \left\{ \begin{alignedat}{3}
    &f_{s}-k y f_{y}-l f = g,\\
    &c_{1}(s) \left( g_{s}-k y g_{y} - m g \right)
    + c_{2}(s) g + g
    = c_{3}(s) f_{yy} - c_{4}(s) f_{yyyy}+h
    \end{alignedat} \right.
    \quad
    (s,y) \in (0,\infty) \times \mathbb{R},
\end{align}
where
$h = h(s,y)$
is a given smooth function belonging to
$C([0,\infty);H^{0,n}(\mathbb{R}))$.
We define the energies
\begin{align}
    E_1(s)
    &=
    \frac{1}{2}\int_{\mathbb{R}}
    y^{2n}
    \left(
    c_3(s) f_y^2 + c_4(s) f_{yy}^2 + c_1(s)g^2
    \right) \,dy,\\
    E_{2}(s)
    &=
    \int_{\mathbb{R}}
    y^{2n}
    \left(
    \frac{1}{2} f^{2}
    + c_{1}(s) f g
    \right) \,dy.
\end{align}
Then, we have
\begin{align}
    \frac{d}{ds} E_1(s)
    &=
    - \int_{\mathbb{R}} y^{2n} g^2 \,dy 
    + \left( - \frac{2n-1}{2}k + l \right) c_3(s)
    \int_{\mathbb{R}} y^{2n} f_y^2 \,dy
    + \left( - \frac{2n-3}{2}k + l \right)c_4(s)
    \int_{\mathbb{R}} y^{2n} f_{yy}^2 \,dy \\
    &\quad +
    \left( - \frac{2n+1}{2}k + m \right) c_1(s)
    \int_{\mathbb{R}} y^{2n} g^2 \,dy
    - c_2(s) \int_{\mathbb{R}} y^{2n} g^2 \,dy \\
    &\quad
    -2n c_3(s) \int_{\mathbb{R}} y^{2n-1} f_y g \,dy
    - 2n(2n-1) c_4(s) \int_{\mathbb{R}} y^{2n-2} f_{yy} g \,dy
    - 4n c_4(s) \int_{\mathbb{R}} y^{2n-1} f_{yy} g_y \,dy \\
    &\quad
    + \frac{c_3'(s)}{2} \int_{\mathbb{R}} y^{2n} f_y^2 \,dy 
    + \frac{c_4'(s)}{2} \int_{\mathbb{R}} y^{2n} f_{yy}^2 \,dy
    + \frac{c_1'(s)}{2} \int_{\mathbb{R}} y^{2n} g^2 \,dy
    + \int_{\mathbb{R}} y^{2n} gh \,dy
\end{align}
and
\begin{align}
    \frac{d}{ds} E_2(s)
    &=
    - c_3(s) \int_{\mathbb{R}} y^{2n} f_y^2 \,dy
    -c_4(s) \int_{\mathbb{R}} y^{2n} f_{yy}^2 \,dy 
    + \left( - \frac{2n+1}{2} k + l \right)
    \int_{\mathbb{R}} y^{2n} f^2 \,dy \\
    &\quad
    + c_1(s) \int_{\mathbb{R}} y^{2n} g^2 \,dy
    + \left( - (2n+1) k + l + m \right) c_1(s)
    \int_{\mathbb{R}} y^{2n} f g \,dy
    - c_2(s) \int_{\mathbb{R}} y^{2n} f g \,dy  \\
    &\quad
    -2n c_3(s) \int_{\mathbb{R}} y^{2n-1} f f_y \,dy 
    - 4n c_4(s) \int_{\mathbb{R}} y^{2n-1} f_y f_{yy} \,dy
    - 2n(2n-1) c_4(s) \int_{\mathbb{R}} y^{2n-2} f f_{yy} \,dy \\
    &\quad
    + c_1'(s) \int_{\mathbb{R}} y^{2n} f g \,dy
    + \int_{\mathbb{R}} y^{2n} f h \,dy.
\end{align}
\end{lemma}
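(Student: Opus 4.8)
The proof is a direct computation, and the plan is as follows. I would differentiate $E_1(s)$ and $E_2(s)$ under the integral sign, use the system \eqref{eq:lem:YoWa} to replace $f_s$, $f_{ys}$, $f_{yys}$ and $g_s$ by purely spatial expressions, and then integrate by parts in $y$ to reorganize everything into the stated form. All integrations by parts below discard boundary terms at $y=\pm\infty$, which is legitimate under the standing regularity and decay hypotheses on $f$, $g$, $h$ (alternatively, one first proves the identities for Schwartz data and passes to the limit by density).

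For $E_1$, differentiation produces three groups of terms: (i) the coefficient-derivative terms $\tfrac{c_3'}{2}\int y^{2n}f_y^2\,dy$, $\tfrac{c_4'}{2}\int y^{2n}f_{yy}^2\,dy$, $\tfrac{c_1'}{2}\int y^{2n}g^2\,dy$, which survive verbatim; (ii) the terms $\int y^{2n}(c_3 f_y f_{ys}+c_4 f_{yy}f_{yys})\,dy$, into which I substitute $f_{ys}=(k+l)f_y+ky f_{yy}+g_y$ and $f_{yys}=(2k+l)f_{yy}+ky f_{yyy}+g_{yy}$, obtained by differentiating the first equation in $y$; and (iii) the term $c_1\int y^{2n}g g_s\,dy$, into which I substitute $g_s$ obtained by solving the second equation. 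The mechanism is that each term such as $kc_3\int y^{2n+1}f_y f_{yy}\,dy=\tfrac{kc_3}{2}\int y^{2n+1}\partial_y(f_y^2)\,dy=-\tfrac{(2n+1)kc_3}{2}\int y^{2n}f_y^2\,dy$, together with its analogues for $f_{yy}^2$ and $g^2$, integrates by parts once and turns the naive coefficients into the drift coefficients $-\tfrac{2n-1}{2}k+l$, $-\tfrac{2n-3}{2}k+l$ and $-\tfrac{2n+1}{2}k+m$. The mixed terms $c_3\int y^{2n}f_y g_y\,dy$, $c_4\int y^{2n}f_{yy}g_{yy}\,dy$ and the $-c_4\int y^{2n}g f_{yyyy}\,dy$ hidden in $g_s$ are integrated by parts; their top-order pieces cancel the $c_3\int y^{2n}g f_{yy}\,dy$ and $-c_4\int y^{2n}g f_{yyyy}\,dy$ coming from $g_s$, while the commutators generated by differentiating the weight $y^{2n}$ produce exactly $-2nc_3\int y^{2n-1}f_y g\,dy$, $-2n(2n-1)c_4\int y^{2n-2}f_{yy}g\,dy$ and $-4nc_4\int y^{2n-1}f_{yy}g_y\,dy$. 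Finally the $g$ on the left of the second equation gives the leading term $-\int y^{2n}g^2\,dy$, the $c_2 g$ gives $-c_2\int y^{2n}g^2\,dy$, and $h$ gives $\int y^{2n}g h\,dy$. Collecting all contributions yields the first identity.

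For $E_2$, differentiation gives $\int y^{2n}f f_s\,dy+c_1'\int y^{2n}f g\,dy+c_1\int y^{2n}f_s g\,dy+c_1\int y^{2n}f g_s\,dy$. Substituting $f_s=ky f_y+lf+g$ in the first and third integrals and $c_1 g_s$ from the second equation in the last, the $ky$-terms integrate by parts as above (giving the coefficients $-\tfrac{2n+1}{2}k+l$ on $\int y^{2n}f^2\,dy$ and $-(2n+1)k+l+m$ on $c_1\int y^{2n}f g\,dy$), the two cross terms $\pm kc_1\int y^{2n+1}f g_y\,dy$ cancel, and the $+\int y^{2n}f g\,dy$ arising from $f f_s$ cancels the $-\int y^{2n}f g\,dy$ arising from the $g$ on the left of the second equation. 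The fourth-order contributions $c_3\int y^{2n}f f_{yy}\,dy$ and $-c_4\int y^{2n}f f_{yyyy}\,dy$ (both entering through $g_s$) are integrated by parts twice, resp. four times, to give $-c_3\int y^{2n}f_y^2\,dy$, $-c_4\int y^{2n}f_{yy}^2\,dy$ together with the weight commutators $-2nc_3\int y^{2n-1}f f_y\,dy$, $-4nc_4\int y^{2n-1}f_y f_{yy}\,dy$, $-2n(2n-1)c_4\int y^{2n-2}f f_{yy}\,dy$; the $c_1 g^2$ from $f_s g$ gives $c_1\int y^{2n}g^2\,dy$, the $c_2 g$ gives $-c_2\int y^{2n}f g\,dy$, and $h$ gives $\int y^{2n}f h\,dy$. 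This yields the second identity, and specializing $k=\tfrac12$, $c_3\equiv 1$ recovers Lemma \ref{lem:YoWa:en}.

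The computation is elementary but bookkeeping-intensive. The main obstacle will be organizing the chain of integrations by parts on the top-order ($f_{yyyy}$-level) terms so as to correctly track, simultaneously, the lower-order commutators generated by the weight $y^{2n}$ and the cancellation of the highest-order pieces between the $f$-part of the energy and the $c_3 f_{yy}-c_4 f_{yyyy}$ buried inside the expression for $g_s$; keeping the $n$-dependent coefficients straight is where most sign errors would occur.
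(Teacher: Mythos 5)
Your proposal is correct and takes essentially the same route as the paper's proof in Appendix A: differentiate the energies under the integral sign, substitute $f_s$, $f_{ys}$, $f_{yys}$, $g_s$ from the system, and integrate by parts against the weight $y^{2n}$, tracking the commutator terms and the cancellation of the top-order pieces. The only cosmetic difference is that you expand the $y$-derivatives of the first equation before integrating by parts, whereas the paper keeps them as $(kyf_y+lf+g)_y$ and applies a list of pointwise product-rule identities; all the coefficients and cancellations you describe agree with the paper's computation.
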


\begin{proof}[Proof of Lemma \ref{lem:YoWa:en:app}]
We calculate
\begin{align}
    \frac{d}{ds} E_1(s)
    &=
    \frac{d}{ds} \left[
    \frac{1}{2}\int_{\mathbb{R}}
    y^{2n}
    \left(
    c_3(s) f_y^2 + c_4(s) f_{yy}^2 + c_1(s)g^2
    \right) \,dy \right] \\
    &=
    c_3(s) \int_{\mathbb{R}} y^{2n} f_y f_{ys} \,dy
    + \frac{c_3'(s)}{2} \int_{\mathbb{R}} y^{2n} f_y^2 \,dy \\
    &\quad +
    c_4(s) \int_{\mathbb{R}} y^{2n} f_{yy} f_{yys} \,dy
    + \frac{c_4'(s)}{2} \int_{\mathbb{R}} y^{2n} f_{yy}^2 \,dy \\
    &\quad + 
    c_1(s) \int_{\mathbb{R}} y^{2n} g g_s \,dy
    + \frac{c_1'(s)}{2} \int_{\mathbb{R}} y^{2n} g^2 \,dy.
\end{align}
Using the equation \eqref{eq:lem:YoWa},
we rewrite the above identity as
\begin{align}
    \frac{d}{ds} E_1(s)
    &=
    c_3(s) \int_{\mathbb{R}} y^{2n}
    f_y (ky f_y + l f + g )_y \,dy
    + \frac{c_3'(s)}{2} \int_{\mathbb{R}} y^{2n} f_y^2 \,dy \\
    &\quad +
    c_4(s) \int_{\mathbb{R}} y^{2n}
    f_{yy} (ky f_y + l f + g )_{yy} \,dy
    + \frac{c_4'(s)}{2} \int_{\mathbb{R}} y^{2n} f_{yy}^2 \,dy \\
    &\quad +
    c_1(s) \int_{\mathbb{R}} y^{2n}
    g (ky g_y + mg )\,dy
    -c_2(s) \int_{\mathbb{R}} y^{2n} g^2 \,dy
    - \int_{\mathbb{R}} y^{2n} g^2 \,dy \\
    &\quad +
    c_3(s) \int_{\mathbb{R}} y^{2n} g f_{yy} \,dy
    - c_4(s) \int_{\mathbb{R}} y^{2n} g f_{yyyy}\,dy
    + \int_{\mathbb{R}} y^{2n} gh \,dy \\
    &\quad +
    \frac{c_1'(s)}{2} \int_{\mathbb{R}} y^{2n} g^2 \,dy.
\end{align}
By noting the relations
\begin{align}
    y^{2n} f_y (yf_y)_y
    &=
    \left( \frac{y^{2n+1}}{2} f_y^2 \right)_y
    - \frac{2n-1}{2} y^{2n} f_y^2,\\
    y^{2n} f_{yy} (y f_y)_{yy}
    &= 
    \left( \frac{y^{2n+1}}{2} f_{yy}^2 \right)_y
    - \frac{2n-3}{2} y^{2n} f_{yy}^2,\\
    y^{2n} g (yg_y)
    &=
    \left( \frac{y^{2n+1}}{2} g^2 \right)_y
    - \frac{2n+1}{2} y^{2n} g^2,\\
    y^{2n} g f_{yy}
    &=
    \left( y^{2n} g f_y \right)_y
    - y^{2n} f_y g_y - 2n y^{2n-1} f_y g,\\
    y^{2n} g f_{yyyy}
    &=
    \left( y^{2n} g f_{yyy} \right)_y
    - \left( (y^{2n} g)_y f_{yy} \right)_y \\
    &\quad +
    \left( 2n(2n-1)y^{2n-2} g + 4n y^{2n-1} g_y + y^{2n} g_{yy} \right) f_{yy},
\end{align}
we have
\begin{align}
    \frac{d}{ds} E_1(s)
    &=
    \left( - \frac{2n-1}{2}k + l \right) c_3(s)
    \int_{\mathbb{R}} y^{2n} f_y^2 \,dy
    + c_3(s) \int_{\mathbb{R}} y^{2n} f_y g_y \,dy
    + \frac{c_3'(s)}{2} \int_{\mathbb{R}} y^{2n} f_y^2 \,dy \\
    &\quad +
    \left( - \frac{2n-3}{2}k + l \right)c_4(s)
    \int_{\mathbb{R}} y^{2n} f_{yy}^2 \,dy
    + c_4(s) \int_{\mathbb{R}} y^{2n} f_{yy} g_{yy} \,dy
    + \frac{c_4'(s)}{2} \int_{\mathbb{R}} y^{2n} f_{yy}^2 \,dy \\
    &\quad +
    \left( - \frac{2n+1}{2}k + m \right) c_1(s)
    \int_{\mathbb{R}} y^{2n} g^2 \,dy
    -c_2(s) \int_{\mathbb{R}} y^{2n} g^2 \,dy
    - \int_{\mathbb{R}} y^{2n} g^2 \,dy \\
    &\quad
    - c_3(s) \int_{\mathbb{R}} y^{2n} f_yg_y \,dy
    -2n c_3(s) \int_{\mathbb{R}} y^{2n-1} f_y g \,dy \\
    &\quad
    - 2n(2n-1) c_4(s) \int_{\mathbb{R}} y^{2n-2} f_{yy} g \,dy
    - 4n c_4(s) \int_{\mathbb{R}} y^{2n-1} f_{yy} g_y \,dy 
    - c_4(s) \int_{\mathbb{R}} y^{2n} f_{yy} g_{yy} \,dy \\
    &\quad 
    + \int_{\mathbb{R}} y^{2n} gh \,dy
    + \frac{c_1'(s)}{2} \int_{\mathbb{R}} y^{2n} g^2 \,dy.
\end{align}
Thus, we conclude
\begin{align}
    \frac{d}{ds} E_1(s)
    &=
    - \int_{\mathbb{R}} y^{2n} g^2 \,dy 
    + \left( - \frac{2n-1}{2}k + l \right) c_3(s)
    \int_{\mathbb{R}} y^{2n} f_y^2 \,dy
    + \left( - \frac{2n-3}{2}k + l \right)c_4(s)
    \int_{\mathbb{R}} y^{2n} f_{yy}^2 \,dy \\
    &\quad +
    \left( - \frac{2n+1}{2}k + m \right) c_1(s)
    \int_{\mathbb{R}} y^{2n} g^2 \,dy
    - c_2(s) \int_{\mathbb{R}} y^{2n} g^2 \,dy \\
    &\quad
    -2n c_3(s) \int_{\mathbb{R}} y^{2n-1} f_y g \,dy
    - 2n(2n-1) c_4(s) \int_{\mathbb{R}} y^{2n-2} f_{yy} g \,dy
    - 4n c_4(s) \int_{\mathbb{R}} y^{2n-1} f_{yy} g_y \,dy \\
    &\quad
    + \frac{c_3'(s)}{2} \int_{\mathbb{R}} y^{2n} f_y^2 \,dy 
    + \frac{c_4'(s)}{2} \int_{\mathbb{R}} y^{2n} f_{yy}^2 \,dy
    + \frac{c_1'(s)}{2} \int_{\mathbb{R}} y^{2n} g^2 \,dy
    + \int_{\mathbb{R}} y^{2n} gh \,dy.
\end{align}
This gives the desired identity for
$E_1(s)$.
Next, we compute
\begin{align}
    \frac{d}{ds} E_2(s)
    &=
    \frac{d}{ds}
    \left[
    \int_{\mathbb{R}}
    y^{2n}
    \left(
    \frac{1}{2} f^{2}
    + c_{1}(s) f g
    \right) \,dy
    \right] \\
    &=
    \int_{\mathbb{R}} y^{2n} f f_s \,dy
    + c_1(s) \int_{\mathbb{R}} y^{2n} f_s g \,dy
    + c_1(s) \int_{\mathbb{R}} y^{2n} f g_s \,dy
    + c_1'(s) \int_{\mathbb{R}} y^{2n} f g \,dy.
\end{align}
Using the equation \eqref{eq:lem:YoWa},
we rewrite the above identity as
\begin{align}
    \frac{d}{ds} E_2(s)
    &=
    \int_{\mathbb{R}} y^{2n} f (kyf_y+ l f + g) \,dy
    + c_1(s) \int_{\mathbb{R}} y^{2n} (ky f_y + l f + g) g \,dy \\
    &\quad
    + c_1(s) \int_{\mathbb{R}} y^{2n} f (ky g_y + mg ) \,dy
    - c_2(s) \int_{\mathbb{R}} y^{2n} f g \,dy
    - \int_{\mathbb{R}} y^{2n} f g \, dy \\
    &\quad
    + c_3(s) \int_{\mathbb{R}} y^{2n} f f_{yy} \,dy
    - c_4 (s) \int_{\mathbb{R}} y^{2n} f f_{yyyy} \,dy \\
    &\quad
    + \int_{\mathbb{R}} y^{2n} f h \,dy
    + c_1'(s) \int_{\mathbb{R}} y^{2n} f g \,dy.
\end{align}
By noting the relations
\begin{align}
    y^{2n} f (y f_y)
    &= \left( \frac{y^{2n+1}}{2} f^2 \right)_y
    - \frac{2n+1}{2} y^{2n} f^2,\\
    y^{2n} f (y g_y)
    &= \left( y^{2n+1} fg \right)_y
    - y^{2n+1} f_y g - (2n+1) y^{2n} fg,\\
    y^{2n} f f_{yy}
    &= \left( y^{2n} f f_y \right)_y
    - y^{2n} f_y^2 - 2n y^{2n-1} f f_y,\\
    y^{2n} f f_{yyyy}
    &= \left( y^{2n} f f_{yyy} \right)_y
    - \left( (y^{2n} f)_y f_{yy} \right)_y \\
    &\quad
    + (2n (2n-1) y^{2n-2} f
    + 4n y^{2n-1} f_y
    + y^{2n} f_{yy}) f_{yy},
\end{align}
we have
\begin{align}
    \frac{d}{ds} E_2(s)
    &=
    \left( - \frac{2n+1}{2} k + l \right)
    \int_{\mathbb{R}} y^{2n} f^2 \,dy
    + \int_{\mathbb{R}} y^{2n} fg \,dy \\
    &\quad
    + k c_1(s) \int_{\mathbb{R}} y^{2n+1} f_y g \,dy
    + l c_1(s) \int_{\mathbb{R}} y^{2n} f g \,dy
    + c_1(s) \int_{\mathbb{R}} y^{2n} g^2 \,dy \\
    &\quad
    -k c_1(s) \int_{\mathbb{R}} y^{2n+1} f_y g \,dy
    + \left( - (2n+1) k + m \right) c_1(s)
    \int_{\mathbb{R}} y^{2n} f g \,dy \\
    &\quad 
    - c_2(s) \int_{\mathbb{R}} y^{2n} f g \,dy
    - \int_{\mathbb{R}} y^{2n} f g \, dy \\
    &\quad
    - c_3(s) \int_{\mathbb{R}} y^{2n} f_y^2 \,dy
    -2n c_3(s) \int_{\mathbb{R}} y^{2n-1} f f_y \,dy \\
    &\quad
    - 2n(2n-1) c_4(s) \int_{\mathbb{R}} y^{2n-2} f f_{yy} \,dy
    - 4n c_4(s) \int_{\mathbb{R}} y^{2n-1} f_y f_{yy} \,dy
    -c_4(s) \int_{\mathbb{R}} y^{2n} f_{yy}^2 \,dy \\
    &\quad
    + \int_{\mathbb{R}} y^{2n} f h \,dy
    + c_1'(s) \int_{\mathbb{R}} y^{2n} f g \,dy.
\end{align}
Thus, we conclude
\begin{align}
    \frac{d}{ds} E_2(s)
    &=
    - c_3(s) \int_{\mathbb{R}} y^{2n} f_y^2 \,dy
    -c_4(s) \int_{\mathbb{R}} y^{2n} f_{yy}^2 \,dy 
    + \left( - \frac{2n+1}{2} k + l \right)
    \int_{\mathbb{R}} y^{2n} f^2 \,dy \\
    &\quad
    + c_1(s) \int_{\mathbb{R}} y^{2n} g^2 \,dy
    + \left( - (2n+1) k + l + m \right) c_1(s)
    \int_{\mathbb{R}} y^{2n} f g \,dy
    - c_2(s) \int_{\mathbb{R}} y^{2n} f g \,dy  \\
    &\quad
    -2n c_3(s) \int_{\mathbb{R}} y^{2n-1} f f_y \,dy 
    - 4n c_4(s) \int_{\mathbb{R}} y^{2n-1} f_y f_{yy} \,dy
    - 2n(2n-1) c_4(s) \int_{\mathbb{R}} y^{2n-2} f f_{yy} \,dy \\
    &\quad
    + c_1'(s) \int_{\mathbb{R}} y^{2n} f g \,dy
    + \int_{\mathbb{R}} y^{2n} f h \,dy.
\end{align}
This completes the proof.
\end{proof}


\section{Local existence}
We discuss the local existence and basic properties of
solutions to \eqref{eq:ndb}. 
Let
$X = H^{3,0}(\mathbb{R}) \times H^{1,0}(\mathbb{R})$
and
\[
    U := \begin{pmatrix}u \\ \partial_t u \end{pmatrix},\quad
    U_0 := \begin{pmatrix} u_0 \\ u_1 \end{pmatrix}.
\]
Let
$D(A) = H^{5,0}(\mathbb{R}) \times H^{3,0}(\mathbb{R})$
and define
\[
    A = \begin{pmatrix} 0 & 1 \\ -\partial_x^4 & 0\end{pmatrix},\quad
    \mathcal{T}(t) = \exp(t A).
\]
We also define
\[
    K(\sigma ; U(s) ) =
    \begin{pmatrix}
        0\\
        -b(\sigma) \partial_t u(s) + a(\sigma) \partial_x^2 u(s)
        + \partial_x N(\partial_x u(s))
    \end{pmatrix},
\]
namely,
$\sigma$ and $s$ denote the variables for the coefficients
$a(t), b(t)$ and the unknown $u$, respectively.

Now, we introduce the definition of
the strong solution and the mild solution.
\begin{definition}\label{def:sol}
Let
$I = [0,T]$ with some $T>0$
or $I = [0,\infty)$.
We say that
a function $u$ (or $U = {}^t (u, \partial_t u)$)
is a strong solution to \eqref{eq:ndb} on $I$
if
\[
    \left\{\begin{alignedat}{3}
    &U \in C(I; D(A)) \cap C^1(I;X),\\
    &\dfrac{d}{dt} U(t) = AU(t) + K(t;U(t)) \quad \text{on} \ I,\\
    &U(0) = U_0.
    \end{alignedat} \right.
\]
Also, we say that 
a function $u$ (or $U = {}^t (u, \partial_t u)$)
is a mild solution to \eqref{eq:ndb} on $I$
if
\[
    \left\{\begin{alignedat}{3}
    &U \in C(I; X),\\
    &U(t) = \mathcal{T}(t) U_0 + \int_0^t \mathcal{T}(t-s) K(s; U(s)) \,ds \quad \text{in} \ C(I; X).\\
    \end{alignedat} \right.
\]
\end{definition}
\begin{proposition}\label{prop:locsol}
\textup{(i) (Local existence)}
For any $U_0 \in X$, there exists $T >0$ such that
there exists a mild solution to \eqref{eq:ndb} on
$[0,T]$.

\noindent
\textup{(ii) (Uniqueness)}
Let $T > 0$.
If $U$ and $V$ are mild solutions in 
$C([0,T] ; X)$
with the same initial condition
$U(0)=V(0) = U_0$,
then $U = V$.

\noindent
\textup{(iii) (Blow-up alternative)}
Let
$T_{\max} = T_{\max}(U_0)$
be
\[
    T_{\max} = \sup \{ T \in (0,\infty] ; \,
    \exists U \in C([0,T] ; X) : \text{a mild solution to \eqref{eq:ndb}}
    \}.
\]
If $T_{\max} < \infty$, then
$\lim_{t\to T_{\max-0}} \| U(t) \|_X = \infty$.

\noindent
\textup{(iv) (Continuous dependence on the initial data)}
Let
$U_0 \in X$
and
$\{ U_0^{(j)} \}_{j=1}^{\infty}$
a sequence in $X$ satisfying
$\lim_{j\to \infty} \| U_0^{(j)} - U_0 \|_X = 0$.
Let
$U$ and $U^{(j)}$ be the corresponding mild solutions
to the initial data $U_0$ and $U_0^{(j)}$, respectively.
Then, for any fixed
$T \in (0, T_{\max}(U_0))$,
we have
$T_{\max}(U_0^{(j)}) > T$
for sufficiently large $j$ and
\[
    \lim_{j\to \infty}
    \sup_{t \in [0,T]} \| U^{(j)}(t) - U(t) \|_X = 0.
\]

\noindent
\textup{(v) (Regularity)}
Let $T > 0$.
If $U_0 \in D(A)$, then the mild solution in
\textup{(i)} on $[0,T]$
becomes a strong solution on $[0,T]$.

\noindent
\textup{(vi) (Small data almost global existence)}
For any $T>0$, there exists
$\varepsilon_0 > 0$ such that if
$\| U_0 \|_X < \varepsilon_0$,
then the corresponding mild solution $U$ can be extended to $[0,T]$.

\noindent
\textup{(vii) (Boundedness of weighted norm)}
Let $T>0$ and
$Y:= H^{2,1}(\mathbb{R}) \times H^{0,1}(\mathbb{R})$.
If $U_0 \in X \cap Y$,
then the corresponding mild solution $U$ on $[0,T]$ belongs to
$C([0,T]; X \cap Y)$.
\end{proposition}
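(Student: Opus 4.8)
The plan is to treat \eqref{eq:ndb} as the semilinear evolution equation $\frac{d}{dt}U = AU + K(t;U)$ in the Hilbert space $X = H^{3,0}(\mathbb{R})\times H^{1,0}(\mathbb{R})$ and to invoke the classical theory of mild solutions, after establishing two structural facts. First, $A$ generates a $C_0$-group $\mathcal{T}(t)=e^{tA}$: diagonalising $A$ by the Fourier transform, its symbol matrix has purely imaginary eigenvalues $\pm i\xi^{2}$, and direct inspection of the corresponding $2\times2$ propagator shows that $\mathcal{T}(t)$ maps each $H^{k+2,0}(\mathbb{R})\times H^{k,0}(\mathbb{R})$ into itself with $\|\mathcal{T}(t)\|\le C(1+|t|)$; in particular it is strongly continuous on $X$ and on $D(A)=H^{5,0}\times H^{3,0}$ and bounded on every bounded time interval. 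Second, $K(\cdot\,;\cdot)$ is locally Lipschitz from $X$ to $X$, uniformly for $t$ in bounded intervals, with $K(t;0)=0$. Only the second component of $K$ is nontrivial, namely $-b(t)\partial_t u + a(t)\partial_x^2 u + \partial_x(N(\partial_x u))$; the two linear terms are bounded from $X$ into $H^{1,0}$ with coefficients depending smoothly (hence Lipschitz on compact $t$-intervals) on $t$, and the essential point is the nonlinear term. Writing $N(z)=\mu z^{2}+\tilde N(z)$ and exploiting the one-dimensional embedding $H^{2,0}\hookrightarrow L^{\infty}$, the Banach-algebra property of $H^{2,0}$, and the difference bounds $|\tilde N^{(j)}(z)-\tilde N^{(j)}(w)|\le C(|z|+|w|)^{p-1-j}|z-w|$ for $j=0,1,2$ (the case $j=2$, legitimate because $p\ge3$, makes $N''$ Lipschitz on bounded sets), one checks that $u\mapsto N(\partial_x u)$ is locally Lipschitz from $H^{3,0}$ into $H^{2,0}$; hence $U\mapsto\partial_x(N(\partial_x u))$ is locally Lipschitz from $X$ into $H^{1,0}$ with a bound that vanishes at $U=0$.

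Granted these two facts, parts (i)--(iv) are the standard consequences for semilinear problems. A contraction mapping argument for $U\mapsto\bigl(t\mapsto\mathcal{T}(t)U_0+\int_0^t\mathcal{T}(t-s)K(s;U(s))\,ds\bigr)$ on a closed ball of $C([0,T];X)$, with $T=T(\|U_0\|_X)$ small, yields a unique local mild solution, giving (i) and (ii). The blow-up alternative (iii) follows by the usual continuation argument: if $T_{\max}<\infty$ while $\|U(t)\|_X$ stayed bounded as $t\uparrow T_{\max}$, the guaranteed local existence time would be bounded below along a sequence $t_n\uparrow T_{\max}$, contradicting maximality. For continuous dependence (iv), subtracting the integral equations for $U$ and $U^{(j)}$ and using the local Lipschitz bound of $K$ on the range of the solutions gives a Gronwall estimate on $U^{(j)}-U$; this estimate shows both that $U^{(j)}$ cannot blow up before $T$ for $j$ large, so $T_{\max}(U_0^{(j)})>T$, and that $\sup_{[0,T]}\|U^{(j)}-U\|_X\to0$.

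For the regularity assertion (v), with $U_0\in D(A)$ the term $t\mapsto\mathcal{T}(t)U_0$ is Lipschitz into $X$ (its derivative $\mathcal{T}(t)AU_0$ is bounded), and a bootstrap on the integral equation — each Picard iterate stays Lipschitz in $t$ since $t\mapsto\int_0^t\mathcal{T}(t-s)g(s)\,ds$ is Lipschitz whenever $g$ is — shows that the mild solution $U$ is Lipschitz in $t$ with values in $X$; then $t\mapsto K(t;U(t))$ is Lipschitz into $X$ (hence, $X$ being a Hilbert space, lies in $W^{1,1}$), and the standard regularity theorem for the inhomogeneous Cauchy problem with $D(A)$-data and $W^{1,1}$ forcing upgrades $U$ to a strong solution, $U\in C([0,T];D(A))\cap C^1([0,T];X)$. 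For the small-data almost-global existence (vi) it suffices, by (iii), to bound $\|U(t)\|_X$ on $[0,T]$; from the integral equation, the bound $\|\mathcal{T}(t)\|\le C_T$ on $[0,T]$, and $\|K(t;w)\|_X\le C_T(1+\|w\|_X^{p-1})\|w\|_X$ (a consequence of the second structural fact), one obtains, as long as $\|U(s)\|_X\le1$, the inequality $\|U(t)\|_X\le C_T\|U_0\|_X+C_T'\int_0^t\|U(s)\|_X\,ds$, hence $\|U(t)\|_X\le C_T\|U_0\|_X e^{C_T'T}$; choosing $\varepsilon_0$ so that $C_T\varepsilon_0 e^{C_T'T}<1$ and running a continuity argument keeps $\|U(t)\|_X<1$ on all of $[0,T]$, so no blow-up occurs. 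Finally, (vii) is a propagation of the weighted norm in $Y=H^{2,1}\times H^{0,1}$: commuting the Fourier multiplier $e^{tA}$ with multiplication by $x$ costs at most a factor $t$ and one spatial derivative, so $\|\mathcal{T}(t)W\|_Y\le C_T(\|W\|_X+\|W\|_Y)$ on $[0,T]$, while Assumption (N) gives $\|\partial_x(N(\partial_x u))\|_{H^{0,1}}\le C(\|u\|_{H^{3,0}})\|u\|_{H^{2,1}}$; inserting these into the integral equation, using that $\|U(s)\|_X$ is already bounded on $[0,T]$ by (i), and applying Gronwall yields $U\in L^{\infty}(0,T;Y)$, after which continuity of $t\mapsto U(t)$ into $Y$ is read off from the integral equation.

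The main obstacle is the second structural fact: establishing the local Lipschitz property of $U\mapsto\partial_x(N(\partial_x u))$ as a map $X\to H^{1,0}$ (and, for (vii), essentially as a map $Y\to H^{0,1}$) under the limited $C^2$-regularity of $N$ allowed by Assumption (N), which forces one to argue exactly at the level of the algebra $H^{2,0}$ and to use the difference estimates for $\tilde N^{(j)}$ in place of Taylor's formula. Once this is in hand the remaining parts are routine applications of semilinear semigroup theory and Gronwall-type bootstraps, so the bulk of the write-up will consist of the nonlinear estimates together with the (elementary but slightly lengthy) Fourier computations for $\mathcal{T}(t)$ on $X$, $D(A)$, and the weighted space $Y$.
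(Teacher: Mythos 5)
Your proposal is correct, and for parts (i)--(vi) it follows essentially the same route as the paper: verify that $K(t;\cdot)$ is locally Lipschitz on $X$ uniformly on compact $t$-intervals (using $H^{2,0}\hookrightarrow L^\infty$, the algebra property, and the difference bounds on $\tilde N^{(j)}$ with $p\ge 3$), then quote or reproduce the standard semilinear theory for (i)--(iv), upgrade to a strong solution via Lipschitz continuity of $t\mapsto U(t)$ and $W^{1,1}$ forcing for (v), and run a contraction/Gronwall argument on all of $[0,T]$ exploiting the at-least-quadratic smallness of the nonlinearity for (vi). (The paper cites Cazenave--Haraux for (i)--(v) and does the global-in-$[0,T]$ contraction for (vi); your Gronwall-plus-continuity variant of (vi) is equivalent. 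One small presentational point: in (v), "each Picard iterate stays Lipschitz" is not quite enough by itself, since the Lipschitz constants must be controlled uniformly --- the clean argument, which the paper uses, applies Gronwall directly to $\|U(t+h)-U(t)\|_X$.) Where you genuinely diverge is (vii): the paper propagates the weighted norm by a weighted energy method, inserting spatial cutoffs $\chi_n$ and approximating the data by smooth compactly supported functions so that the weighted energy is a priori finite, then passing to the limit in $j$ and $n$; you instead prove a commutator estimate $\|\mathcal{T}(t)W\|_Y\le C_T(\|W\|_X+\|W\|_Y)$ on the Fourier side and close a Gronwall inequality for $\|U(t)\|_Y$ through the Duhamel formula. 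Your multiplier computation is sound (the $\xi$-derivatives of the propagator symbol cost at most a factor of $t$ and the number of derivatives separating $Y$ from $X$, and $\|\partial_x(N(\partial_x u))\|_{H^{0,1}}\le C(\|u\|_{H^{3,0}})\|u\|_{H^{2,1}}$ holds), and it buys a proof that avoids the approximation-by-regular-data step. The one point you must still address, and the reason the paper uses cutoffs, is that Gronwall applied to $\|U(t)\|_Y$ presupposes that this quantity is finite; in your framework the cleanest fix is to run the contraction mapping directly in $C([0,T];X\cap Y)$ (all your estimates permit this) so that the $Y$-norm is finite by construction and your Gronwall bound shows it cannot blow up before the $X$-norm does.
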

\begin{proof}
Let
$T_0 > 0$
be fixed.
Then, $a(t), b(t)$ are positive,
and they and their first derivatives are
bounded by some constant $C_{T_0}>0$
on $[0,T_0]$.
Let $T \in (0,T_0]$.
Then, for any $U = {}^t (u,v) \in X$ and $t \in [0,T]$,
we have
\begin{align}
    \| K(t; U) \|_{X}
    &=
    \left\| -b(t) \partial_t u + a(t) \partial_x^2 u + \partial_x N (\partial_x u) \right\|_{H^1} \\
    &\le
    C_{T_0} \left( \| \partial_t u \|_{H^1} + \| \partial_x^2 u \|_{H^1} \right) \\
    &\quad +
    C \left( \| \partial_x u \|_{W^{1,\infty}} + \| \partial_x u \|_{W^{1,\infty}}^{p-1} \right)
    \| \partial_x^2 u \|_{H^1}
    < \infty,
\end{align}
that is,
$K(t; \cdot) : X \to X$.
Moreover, for $M> 0$ and
$U = {}^t(u,v), W = {}^t(w,z) \in
B_M = \{ U \in X ; \| U \|_X \le M \}$,
we calculate
\begin{align}
    \| K (t; U) - K(t;W) \|_X
    &\le
    C_{T_0} \left( \| v - z \|_{H^1} + \| u - w \|_{H^3} \right) \\
    &\quad 
    + C \left( \| u \|_{W^{2,\infty}}
        + \| w \|_{W^{2,\infty}} \right) \|  u -  w \|_{H^3} \\
    &\quad
    + C \left( | u \|_{W^{2,\infty}}
        + \| w \|_{W^{2,\infty}} \right)^{p-1}
        \| u - w \|_{H^3} \\
    &\le
    C_{T_0,M} \| U - W \|_X.
\end{align}
Therefore,
$K(t; \cdot)$
is locally Lipschitz continuous in $X$.
Therefore, from the proofs of \cite[Lemmas 4.3.2, Proposition 4.3.3]{CaHa},
there exist $T > 0$ and a unique mild solution $u$
on $I = [0,T]$.
Also, \cite[Theorem 4.3.4]{CaHa} shows the property (iii).
Moreover, by \cite[Proposition 4.3.7]{CaHa},
the continuous dependence on the initial data.
This proves (i)--(iv).

Next, we prove (iv) along with the argument of
\cite[Lemma 4.3.9]{CaHa}.
Take $U_0 \in D(A)$ and $T \in (0, T_{\max})$.
Let
$h>0$, $t \in [0,T-h]$, and
$M : = \sup_{s\in [0,T]} \| U(s) \|_X$.
Consider
\begin{align}
    U(t+h) - U(t)
    &=
    \mathcal{T}(h)\mathcal{T}(t) U_0 - \mathcal{T}(t) U_0 \\
    &\quad
    + \int_0^t \mathcal{T}(s)
    \left\{ K(t+h-s; U(t+h-s)) - K(t-s;U(t-s)) \right)\} \,ds\\
    &\quad
    + \int_0^h \mathcal{T}(t+s) K(h-s ; U(h-s)) \,ds \\
    &=: J_1 + J_2 + J_3.
\end{align}
For $J_1, J_2$, we estimate
\begin{align}
    \| J_1 \|_X
    &\le
    \| \mathcal{T}(h) U_0 - U_0 \|_X
    = \left\| \int_0^h \mathcal{T}(s) A U_0 \,ds \right\|_X
    \le h \| A U_0 \|_X,\\
    \| J_3 \|_X
    &\le
    h \sup_{s \in [0,T]} \| K(s;U(s)) \|_X.
\end{align}
For $J_2$, using the
Lipschitz continuity of $a, b : [0,T] \to \mathbb{R}$
and $K(s : \cdot ) : X \to X$,
we can show
\begin{align}
    \| J_2 \|_X
    &\le
    \int_0^t \| K(t+h-s ; U(t+h-s)) - K(t+h-s; U(t-s)) \|_X \,ds \\
    &\quad
    + \int_0^t \| K(t+h-s;U(t-s))-K(t-s;U(t-s)) \|_X \,ds \\
    &\le
    C_{T_0,M} h + C_{T_0,M} \int_0^t \| U(s+h) - U(s) \|_X \,ds.
\end{align}
Then, the Gronwall inequality implies
\[
    \| U(t+h)- U(t) \|_X
    \le C_{T_0,M} h,
\]
that is,
$U : [0,T] \to X$
is Lipschitz continuous.
This further leads to
\begin{align}
    \| K(t;U(t)) - K(s;U(s)) \|_X
    &\le
    \| K(t;U(t)) - K(s;U(t)) \|_X
    + \| K(s;U(t)) - K(s;U(s)) \|_X \\
    &\le C_{T_0,M} |t-s|,
\end{align}
i.e.,
$K(\cdot ; U(\cdot) ) : [0,T] \to X$
is Lipschitz continuous,
and hence,
$K(\cdot ; U(\cdot)) \in W^{1,1}((0,T); X)$.
This enables us to apply \cite[Lemma 4.16]{CaHa}
and $u$ becomes a strong solution.
This proves (v).

Next, we prove (vi).
Let $T>0$ be arbitrary fixed,
$I := [0,T]$,
and
\[
    C_{T,a,b} := \int_0^T ( |a(s)| + |b(s)| ) \,ds.
\]
Let
$\varepsilon > 0$
be sufficiently small so that
$2(1+C_{T,a,b})\varepsilon <1$
and let
$\mathcal{B}_{\varepsilon} 
= \{ U \in C([0,T] ; X) ;\, \sup_{t \in [0,T]} \| U(t) \|_X \le 2(1+C_{T,a,b})\varepsilon \}$.
Define a map
$\Phi : C(I:X) \to C(I;X)$
by
\[
    \Phi[U] (t) :=
    \mathcal{T}(t) U_0 + \int_0^t \mathcal{T}(t-s) K(s;U(s)) \,ds.
\]
Then, for $U_0$ satisfying
$\| U_0 \|_X \le \varepsilon$
and
$U = {}^t (u,v) \in \mathcal{B}_{\varepsilon}$,
we see that
\begin{align}
    \| \Phi[U] (t) \|_X
    &\le
    \| \mathcal{T}(t) U_0 \|_X
    + \int_0^t \| \mathcal{T}(t-s) K(s;U(s)) \|_X \,ds \\
    &\le
    \| U_0 \|_X
    + \int_0^t \left( |b(s)| \| v(s) \|_{H^1} + |a(s)| \| \partial_x^2 u(s) \|_{H^1} \right) \,ds \\
    &\quad + \int_0^t \| \partial_x N (\partial_x u(s)) \|_{H^1} \,ds \\
    &\le
    (1+C_{T,a,b}) \varepsilon
    + T C_N (2(1+C_{T,a,b}) \varepsilon )^2,
\end{align}
where
$C_N > 0$ is a constant depending only on the nonlinearity $N$.
Similarly, we have, for $U, V \in \mathcal{B}_{\varepsilon}$,
\begin{align}
    \| \Phi[U](t) - \Phi[V] (t) \|_X
    &\le
    \int_0^t \| K(s; U(s)) - K(s ; V(s)) \|_X \,ds \\
    &\le T \tilde{C}_N (2(1+C_{T,a,b})\varepsilon) 
    \sup_{s \in [0,T]} \| U(s) - V(s) \|_X, 
\end{align}
$\tilde{C}_N > 0$ is a constant depending only on the nonlinearity $N$.
Therefore, taking $\varepsilon$ further small so that
\[
    T C_N (2(1+C_{T,a,b}) \varepsilon ) \le 1,\quad
    T \tilde{C}_N (2(1+C_{T,a,b})\varepsilon) \le \frac{1}{2},
\]
we see that
$\Phi$ is a contraction mapping on $\mathcal{B}_{\varepsilon}$.
This and the uniqueness of mild solution imply that
the mild solution obtained in (i) can be extended to $[0,T]$.

Finally, we prove (vii).
Let $T> 0$, $I = [0,T]$, $U_0 \in Y$, and
let $U$ be the corresponding mild solution on $[0,T]$
to the initial data $U_0$.
We put
$M := \sup_{t\in I} \| U(t) \|_X$.
In order to justify the following energy method,
we take a sequence
$\{ U_0^{(j)} \}_{j=1}^{\infty}$
from $[C_0^{\infty}(\mathbb{R})]^2$
such that
$\lim_{j\to \infty} U_0^{(j)} = U_0$
in $X \cap Y$.
Then, the corresponding strong solution 
$U^{(j)} \in C(I ; D(A))\cap C^1(I; X)$
to the data $U_0^{(j)}$
satisfies
$\lim_{j\to \infty} U^{(j)} = U$
in
$C(I ; X)$
by the continuous dependence on the initial data.
In particular, taking sufficiently large $j$,
we may suppose that
$\sup_{j\in \mathbb{N}, t \in I} \| U^{(j)}(t) \|_X \le 2 M$.

Let
\begin{align*}
    &\chi \in C_0^{\infty} (\mathbb{R}),\quad
    0\le \chi \le 1, \quad
    \chi(x) = \begin{cases} 1 &(|x|\le 1),\\ 0 &(|x| \ge 2), \end{cases}\\
    &\chi_n(x) := \chi \left( \frac{x}{n} \right) \quad
    (n \in \mathbb{N}).
\end{align*}
By
$\mathrm{supp}\, \chi_n \subset [-2n, 2n]$,
we easily see that
\begin{align}
    | \partial_x (x^2 \chi_n(x)^2 ) |
    &=
    \left| 2 x \chi_n(x)^2 + 2 \frac{x^2}{n} \chi' \left( \frac{x}{n} \right) \chi_n(x) \right|
    \le C |x| \chi_n(x),\\
    | \partial_x^2 (x^2 \chi_n(x)^2 ) |
    &=
    \left|
    2 \chi_n(x)^2 + 4 \frac{x}{n} \chi' \left( \frac{x}{n} \right) \chi_n(x)
    + 2 \frac{x^2}{n^2} \left( \left( \chi'\left(\frac{x}{n}\right)\right)^2 + \chi''\left( \frac{x}{n} \right) \chi_n(x) \right)
    \right| \\
    &\le C 
\end{align}
with some constant $C>0$.
Denote
$U = {}^t (u, \partial_t u)$,
$U^{(j)} = {}^t (u^{(j)}, \partial_t u^{(j)})$,
and consider
\begin{align*}
    E_n (t;u)
    &:= \int_{\mathbb{R}}
    x^2 \chi_n(x)^2
    \left(
        |\partial_t u(t,x)|^2
        + a(t) |\partial_x u(x)|^2
        + |\partial_x^2 u(t,x)|^2
        + | u(t,x)|^2 
    \right) \,dx,\\
    E (t;u)
    &:= \int_{\mathbb{R}}
    x^2
    \left(
        |\partial_t u(t,x)|^2
        + a(t) |\partial_x u(x)|^2
        + |\partial_x^2 u(t,x)|^2
        + | u(t,x)|^2 
    \right) \,dx.
\end{align*}
Note that $E_n(t;u^{(j)})$ is finite thanks to
$\chi_n$.
Differentiating it, we have
\begin{align}
    \frac{d}{dt} E_n (t; u^{(j)})
    &=
    2 \int_{\mathbb{R}} x^2 \chi_n(x)^2
    \left( 
    \partial_t u^{(j)} \partial_t^2 u^{(j)}
    + a(t) \partial_x u^{(j)} \partial_t \partial_x u^{(j)}
    + \partial_x^2 u^{(j)} \partial_t \partial_x^2 u^{(j)}
    \right) \,dx \\
    &\quad
    + 2 \int_{\mathbb{R}} x^2 \chi_n(x)^2 u^{(j)} \partial_t u^{(j)} \,dx
    + \int_{\mathbb{R}} x^2 \chi_n(x)^2 a'(t) |\partial_x u^{(j)} |^2 \,dx.
\end{align}
By the integration by parts and using the equation \eqref{eq:ndb}, the right-hand side can be written as
\begin{align}
    &2 \int_{\mathbb{R}} x^2 \chi_n(x)^2 \partial_t u^{(j)}
    \left( -b(t) \partial_t u^{(j)} + \partial_x N(\partial_x u^{(j)} ) \right) \,dx \\
    &\quad
    -2 \int_{\mathbb{R}} \partial_x (x^2 \chi_n(x)^2 ) a(t) \partial_x u^{(j)} \partial_t u^{(j)} \,dx \\
    &\quad
    + 4 \int_{\mathbb{R}}  \partial_x (x^2 \chi_n(x)^2 ) a(t) \partial_x^3 u^{(j)} \partial_t u^{(j)} \,dx
    +2 \int_{\mathbb{R}}  \partial_x^2 (x^2 \chi_n(x)^2 ) a(t) \partial_x^2 u^{(j)} \partial_t u^{(j)} \,dx \\
    &\quad
    + 2 \int_{\mathbb{R}} x^2 \chi_n(x)^2 u^{(j)} \partial_t u^{(j)} \,dx
    + \int_{\mathbb{R}} x^2 \chi_n(x)^2 a'(t) |\partial_x u^{(j)} |^2 \,dx.
\end{align}
The above quantity can be further estimated by
\[
    C (2M)^2
    + C_{T,a,b,M} E_n (t; u^{(j)})
\]
with some constants $C, C_{T,a,b,M} > 0$.
Hence, the Gronwall inequality implies
\[
    E_n (t; u^{(j)} ) \le \tilde{C}_{T,a,b,M},
\]
where the constant $\tilde{C}_{T,a,b,M}$
is independent of $n$ and $j$.
Letting $j \to \infty$ first and using
the continuous dependence on the initial data, we have
\[
    \int_{\mathbb{R}}
    x^2 \chi_n(x)^2
    \left(
        |\partial_t u (t,x)|^2
        + a(t) |\partial_x u (x)|^2
        + |\partial_x^2 u (t,x)|^2
        + | u (t,x)|^2 
    \right) \,dx
    \le \tilde{C}_{T,a,b,M}.
\]
Then, letting $n \to \infty$, we conclude
\[
    \int_{\mathbb{R}}
    x^2
    \left(
        |\partial_t u (t,x)|^2
        + a(t) |\partial_x u (x)|^2
        + |\partial_x^2 u (t,x)|^2
        + | u (t,x)|^2 
    \right) \,dx
    \le \tilde{C}_{T,a,b,M},
\]
which shows
$U(t) \in Y$
for any $t \in [0,T]$.
The continuity of $\| U(t) \|_Y$
in $t$
follows from the estimate
\begin{align}
    | E_n(t; u^{(j)}) - E_n (s; u^{(j)}) |
    &\le
    \int_s^t \left| \frac{d}{d\sigma} E_n(\sigma ; u^{(j)}) \right| \,d\sigma 
    \le
    C_{T,a,b,M} (t-s)
\end{align}
for $s < t$
and taking the limits $j\to \infty$ and $n\to \infty$.

\end{proof}

\section*{Acknowledgements}
This work was supported by JSPS KAKENHI Grant Numbers
JP18H01132,
JP20K14346.

\end{document}